\numberwithin{equation}{section}
\theoremstyle{plain}
\newtheorem{theo}{Theorem}[section]
\newtheorem{prop}[theo]{Proposition}
\newtheorem{coro}[theo]{Corollary} 
\newtheorem{lemm}[theo]{Lemma}
\theoremstyle{definition}
\newtheorem{defi}[theo]{Definition}
\newtheorem{rema}[theo]{Remark}
\newtheorem{theo-defi}[theo]{Theorem-Definition}
\newtheorem{prop-defi}[theo]{Proposition-Definition}
\newtheorem{rema-defi}[theo]{Remark-Definition}
\newtheorem{exem-defi} [theo]{Example-Definiton}
\newtheorem{prob}[theo]{Problem}
\newtheorem{exem}[theo]{Example}
\def \al{\alpha}
\def \bet{\beta}
\def \bul{\bullet}
\def \col{\colon}
\def \Del{\Delta}
\def \del{\delta}
\def \gam{\gamma}
\def \inf{\infty}
\def \kap{\kappa}
\def \lam{\lambda}
\def \Lo{\Longrightarrow}
\def \lo{\longrightarrow}
\def \lom{\longmapsto}
\def \mab{\mathbb}
\def \Om{\Omega}
\def \om{\omega}
\def \ol{\overline}
\def \os{\overset}
\def \parno{\par\noindent}
\def \sig{\sigma}
\def \sus{\subset}
\def \us{\underset}
\def \vpl{\varprojlim}
\newcommand{\getsfrom}
{\ensuremath{\longleftarrow\kern-.
52em\lower-.1ex\hbox%
{$\shortmid\,$}}}
\begin{document}

\title{Congruences of the cardinalities of rational points of log Fano varieties 
and log Calabi-Yau varieties over the log points of finite fields}
\author{Yukiyoshi Nakkajima 
\date{}\thanks{2010 Mathematics subject 
classification number: 14F30, 14F40, 14J32. 
The first named author is supported from JSPS
Grant-in-Aid for Scientific Research (C)
(Grant No.~80287440). 
The second named author is supported by 
JSPS Fellow (Grant No.~15J05073).\endgraf}}
\maketitle

$${\bf Abstract}$$
In this article we give the definitions of 
log Fano varieties and log Calabi-Yau varieties in 
the framework of theory of log schemes of Fontain-Illusie-Kato and 
give congruences of the cardinalities of rational points of them
over the log points of finite fields. 

\section{Introduction}\label{sec:int} 
In this article we discuss a new topic--rational points of 
the underlying schemes of log schemes in the sense of 
Fontaine-Illusie-Kato over the log point of 
a finite field--for interesting log schemes. 
First let us recall results on rational points of 
(proper smooth) schemes over a finite field. 
\par 
The following is famous Ax' and Katz' theorem: 

\begin{theo}[{\bf \cite{ax}, \cite{kat}}]\label{theo:axka}
Let ${\mab F}_q$ be the finite field with $q=p^e$-elements, 
where $p$ is a prime number.  
Let $n$ and $r$ be positive integers. 
Let $D_i$ $(1\leq i\leq r)$ be a hypersurface of ${\mab P}^n_{{\mab F}_q}$ 
of degree $d_i$. 
If $\sum_{i=1}^rd_i\leq n$, 
then $\# (\bigcap_{i=1}^rD_i)({\mab F}_{q^k})\equiv 1~{\rm mod}~q^k$.  
\end{theo}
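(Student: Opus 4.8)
The plan is to reduce the projective statement to the classical \emph{affine} Ax--Katz theorem by passing to affine cones. Write $D_i = V(F_i)$ with $F_i \in \mathbb{F}_q[x_0,\dots,x_n]$ a nonzero homogeneous polynomial of degree $d_i$, set $X = \bigcap_{i=1}^r D_i \subseteq \mathbb{P}^n_{\mathbb{F}_q}$, and let $C \subseteq \mathbb{A}^{n+1}_{\mathbb{F}_q}$ be the common zero locus of $F_1,\dots,F_r$, i.e.\ the affine cone over $X$. The first step is the elementary point count comparing $C$ and $X$: since the $F_i$ are homogeneous, a nonzero vector $v \in (\mathbb{F}_{q^k})^{n+1}$ lies on $C$ if and only if its image $[v] \in \mathbb{P}^n(\mathbb{F}_{q^k})$ lies on $X$, and the fibre of $C \setminus \{0\} \to X$ over a point of $X(\mathbb{F}_{q^k})$ is a punctured line, having exactly $q^k-1$ points over $\mathbb{F}_{q^k}$. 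Counting the origin separately,
\[
\#C(\mathbb{F}_{q^k}) \;=\; 1 + (q^k-1)\,\#X(\mathbb{F}_{q^k}) \qquad\text{for every } k\geq 1 .
\]

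The second step applies the affine Ax--Katz theorem over the finite field $\mathbb{F}_{q^k}$, which has $(p^e)^k$ elements: if $g_1,\dots,g_r$ are polynomials in $N$ variables over a finite field $\kappa$ with $\sum_i \deg g_i < N$, then the number of common $\kappa$-rational zeros in $\mathbb{A}^N_\kappa$ is divisible by $\#\kappa$. Taking $N = n+1$, $\kappa = \mathbb{F}_{q^k}$ and $g_i = F_i$, the hypothesis $\sum_{i=1}^r d_i \leq n < n+1 = N$ is exactly what is needed, so $q^k \mid \#C(\mathbb{F}_{q^k})$. Feeding this into the displayed identity gives $0 \equiv 1 + (q^k-1)\,\#X(\mathbb{F}_{q^k}) \equiv 1 - \#X(\mathbb{F}_{q^k}) \pmod{q^k}$, that is, $\#X(\mathbb{F}_{q^k}) \equiv 1 \pmod{q^k}$, which is the assertion.

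For completeness I would also recall why the affine input holds. Fixing a nontrivial additive character $\psi$ of $\mathbb{F}_p$ and writing $\psi_\kappa = \psi \circ \mathrm{Tr}_{\kappa/\mathbb{F}_p}$, one has
\[
\#Z \;=\; \frac{1}{(\#\kappa)^r} \sum_{t \in \kappa^r}\ \sum_{x \in \kappa^N} \psi_\kappa\!\Big(\textstyle\sum_{i} t_i g_i(x)\Big),
\]
in which the term $t=0$ contributes $(\#\kappa)^N$. It therefore suffices to bound from below the $p$-adic valuation of each twisted exponential sum $S(t) = \sum_{x \in \kappa^N} \psi_\kappa(\sum_i t_i g_i(x))$ with $t \neq 0$, showing that $\operatorname{ord}_p S(t)$ is large enough to force $\operatorname{ord}_p(\#Z) \geq \operatorname{ord}_p(\#\kappa)$. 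This valuation estimate --- obtained by expanding $S(t)$ in terms of Teichm\"uller lifts and Gauss sums and applying Stickelberger's theorem on the valuations of Gauss sums, or equivalently through Dwork's $p$-adic analytic expansion together with a Newton-polygon bound --- is the content of \cite{ax} and \cite{kat}, and the inequality $\sum_i d_i < N$ is precisely the hypothesis under which it yields what is wanted.

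I expect the genuine difficulty to be concentrated entirely in that last valuation estimate for the sums $S(t)$; the passage from $\mathbb{P}^n$ to the affine cone and the ensuing congruence manipulation are routine. If one is content simply to cite \cite{ax} and \cite{kat} for the affine statement, the proof reduces to the first two steps.
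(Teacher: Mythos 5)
Your reduction is correct: the paper states this result only as a quotation of Ax and Katz, with no proof of its own, and the intended content is exactly the affine theorem you invoke. Your passage to the affine cone, the count $\#C(\mathbb{F}_{q^k})=1+(q^k-1)\,\#X(\mathbb{F}_{q^k})$, and the application of the affine Ax--Katz divisibility over $\mathbb{F}_{q^k}$ (where $\sum_i d_i\leq n<n+1$ gives divisibility of $\#C(\mathbb{F}_{q^k})$ by $q^k$) together yield $\#X(\mathbb{F}_{q^k})\equiv 1 \bmod q^k$, and the genuinely hard valuation estimate is properly delegated to the cited works, just as the paper does.
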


\par 
In \cite{es} Esnault has proved the following theorem generalizing 
this theorem in the case where $\bigcap_{i=1}^rD_i$ is smooth over ${\mab F}_q$ 
and geometrically connected: 

\begin{theo}[{\bf \cite[Corollary 1.3]{es}}]\label{theo:esn} 
Let $X$ be a geometrically connected projective smooth scheme 
over ${\mab F}_q$. 
If $X/{\mab F}_q$ is a Fano variety $($i.~e., the inverse of the canonical sheaf 
$\om_{X/{\mab F}_q}^{-1}$ of $X/{\mab F}_q$ is ample$)$,  
then $\# X({\mab F}_{q^k}) \equiv 1~{\rm mod}~ q^k$ $(k\in {\mab Z}_{\geq 1})$.
\end{theo}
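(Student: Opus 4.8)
The plan is to deduce the congruence from the Grothendieck--Lefschetz trace formula, using the rational chain connectedness of Fano varieties to force the cohomology of $X$ in the intermediate degrees to have coniveau $\ge 1$.

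First I would reduce to a divisibility statement for Frobenius eigenvalues. Put $n=\dim X\ (\ge 1)$, fix $\ell\ne p$, write $H^i:=H^i(X_{\ol{\mab F}_q},\mab Q_\ell)$ for $\ell$-adic \'etale cohomology, and let $F$ be the geometric Frobenius. The trace formula gives $\#X(\mab F_{q^k})=\sum_{i=0}^{2n}(-1)^i\operatorname{tr}(F^k\mid H^i)$. As $X$ is geometrically connected the term $i=0$ equals $1$, and the term $i=2n$ equals $q^{nk}\in q^k\mab Z$ since $n\ge 1$. Hence it suffices to prove $\operatorname{tr}(F^k\mid H^i)\in q^k\ol{\mab Z}$ (the ring of algebraic integers) for $1\le i\le 2n-1$: then $\#X(\mab F_{q^k})-1$ lies in $q^k\ol{\mab Z}$, and being a rational integer it lies in $q^k\ol{\mab Z}\cap\mab Z=q^k\mab Z$. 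It is therefore enough to show that for $1\le i\le 2n-1$ every eigenvalue of $F$ on $H^i$ lies in $q\ol{\mab Z}$.

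Next I would produce a decomposition of the diagonal. Ampleness of the anticanonical sheaf is preserved by base field extension, so $X_L$ is Fano over $L:=\ol{\mab F_q(X)}$, hence rationally chain connected by the theorem of Campana and of Koll\'ar--Miyaoka--Mori, and therefore $\operatorname{CH}_0(X_L)=\mab Z$. By the Bloch--Srinivas decomposition of the diagonal --- pushed down to $\mab F_q$ by summing over a Galois orbit, at the cost of an extra integer factor --- there exist $N\ge 1$, a divisor $D\subsetneq X$, a closed point $x_0\in X$, and an identity in $\operatorname{CH}_n(X\times_{\mab F_q}X)\otimes\mab Q$
\[
N[\Del_X]=[\Gam_1]+[\Gam_2],\qquad \operatorname{supp}\Gam_1\sus X\times D,\quad \operatorname{supp}\Gam_2\sus X\times\{x_0\}.
\]
Let these correspondences act on $H^i$ with $1\le i\le 2n-1$. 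The diagonal acts as $\operatorname{id}$; the cycle $\Gam_2$ acts as $0$, since a correspondence supported on $X\times\{x_0\}$ has K\"unneth class in $H^0(X)\otimes H^{2n}(X)$ and thus annihilates $H^i$ for $i<2n$. Hence $N\cdot\operatorname{id}_{H^i}=(\Gam_1)_*$, so $(\Gam_1)_*$ is surjective. Choosing an alteration $\rho\col\wt D\to D_{\mathrm{red}}$ with $\wt D$ smooth projective of dimension $\le n-1$ (de Jong), the projection formula shows that $(\Gam_1)_*$ factors through the Gysin map $\rho_*\col H^{i-2}(\wt D_{\ol{\mab F}_q},\mab Q_\ell)(-1)\to H^i$; hence $\rho_*$ is onto.

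Finally, $\rho_*$ and the Tate twist are $F$-equivariant, so every eigenvalue of $F$ on $H^i$ is among the eigenvalues of $F$ on $H^{i-2}(\wt D_{\ol{\mab F}_q},\mab Q_\ell)(-1)$; each of these is $q\bet$ with $\bet$ an eigenvalue of $F$ on $H^{i-2}(\wt D_{\ol{\mab F}_q},\mab Q_\ell)$, and $\bet$ is an algebraic integer (Deligne, Weil~I). Thus every eigenvalue of $F$ on $H^i$ lies in $q\ol{\mab Z}$, so $\operatorname{tr}(F^k\mid H^i)\in q^k\ol{\mab Z}$, and the first step concludes the proof. I expect the main obstacle to be the second step: making the decomposition of the diagonal, the divisor $D$, and the alteration $\wt D$ defined over $\mab F_q$ itself --- so that the induced maps on cohomology commute with $F$ --- and handling the singular, possibly non-reduced $D$ in characteristic $p$, where de Jong's alterations substitute for resolution of singularities; this is harmless because only $\mab Q_\ell$-coefficients enter. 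One also relies on the nontrivial geometric input that Fano varieties are rationally chain connected in arbitrary characteristic. The same congruence can be obtained $p$-adically, with crystalline or rigid cohomology in place of $\ell$-adic cohomology and an integrality theorem for Frobenius in the style of Deligne's; this is the subject of the first part of Esnault's paper and the natural framework for the logarithmic generalizations pursued in this article.
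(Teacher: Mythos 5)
Your argument is correct in outline, but note that it is essentially Esnault's original proof of this statement rather than anything carried out in this paper: Theorem (\ref{theo:esn}) is only quoted here from \cite{es}, and the paper gives no proof of it. What the paper does prove are the conditional generalizations (\ref{theo:vc}) and (\ref{coro:npf}), and by a deliberately different route: instead of rational chain connectedness and a decomposition of the diagonal, it establishes the vanishing $H^i(X,{\cal W}({\cal O}_X))=0$ for $i>0$ (via the log Kodaira vanishing theorem (\ref{theo:stk}) of \cite{ny} for quasi-$F$-split log smooth schemes of vertical type, together with the exact sequence (\ref{ali:yexyk})), and then deduces $\# \os{\circ}{X}({\mab F}_{q^k})\equiv 1~{\rm mod}~q^k$ from \'Etesse--Le Stum's trace formula for rigid cohomology with compact support \cite{el} combined with Berthelot--Bloch--Esnault's identification (\ref{ali:tk0}) of the slope $<1$ part with Witt vector cohomology \cite{bbe}; the introduction stresses that no rational connectedness is used. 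Comparing the two: your route (Campana and Koll\'ar--Miyaoka--Mori rational chain connectedness, Bloch--Srinivas decomposition, de Jong alterations, Deligne's integrality) is unconditional for smooth Fano varieties and purely $\ell$-adic, but it relies on heavy geometric input and, as you yourself flag, requires care to get the decomposition, the divisor $D$ and the alteration defined over ${\mab F}_q$ so that all induced maps are Frobenius-equivariant; this is standard (spread out over ${\mab F}_q$ and take Galois orbits or norms, at the cost of extra integer factors, harmless with ${\mab Q}_\ell$-coefficients, or pass to ${\mab F}_{q^m}$ and note that divisibility of $\alpha^m$ by $q^m$ at all places over $p$ gives divisibility of $\alpha$ by $q$), but it is exactly the point where Esnault's paper does real work. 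The paper's route buys a $p$-adic argument that extends to log Fano varieties over the log point and avoids rational connectedness altogether, but only under the additional hypothesis $h^F(\os{\circ}{X})<\infty$, which supplies the vanishing $H^i(X,{\cal O}_X)=0$ $(i>0)$; in particular, within this paper the unconditional statement (\ref{theo:esn}) remains an external input, and your proposal is a faithful reconstruction of the argument of \cite{es} rather than of the method developed here.
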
 

In \cite{ki} Kim has proved the following theorem 
and he has reproved Esnault's theorem as a corollary of his theorem 
by using the Lefschetz trace formula for 
the crystalline cohomology of $X/{\mab F}_q$: 

\begin{theo}[{\bf \cite[Theorem 1]{ki}}]\label{theo:kim} 
Let $\kap$ be a perfect field of characteristic $p>0$. 
Set ${\cal W}:={\cal W}(\kap)$ and $K_0:={\rm Frac}({\cal W})$. 
Let $X$ be a  projective smooth scheme over $\kap$. 
If $X/\kap$ is a Fano variety,  
then $H^i(X,{\cal W}({\cal O}_X))\otimes_{\cal W}{K_0} =0$ for $i>0$.
\end{theo}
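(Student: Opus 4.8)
The plan is to identify $H^i(X,{\cal W}({\cal O}_X))\otimes_{\cal W}K_0$ with the slope-$<1$ part of the crystalline cohomology of $X/{\cal W}$, and then to show, using that a Fano variety is rationally chain connected, that in positive degrees this part is empty. Concretely: by Illusie's theory of the de Rham--Witt complex and the degeneration modulo torsion of the slope spectral sequence $E_1^{j,i}=H^i(X,W\Omega^j_X)\Rightarrow H^{i+j}_{\rm crys}(X/{\cal W})$ (Bloch; Illusie--Raynaud), the $K_0$-vector space $H^i(X,{\cal W}({\cal O}_X))\otimes_{\cal W}K_0$ is canonically the part of $H^i_{\rm crys}(X/{\cal W})\otimes_{\cal W}K_0$ on which the crystalline Frobenius $\varphi$ acts with all Newton slopes in $[0,1)$; equivalently (Berthelot--Bloch--Esnault) it is the slope-$<1$ part of $H^i_{\rm rig}(X/K_0)$. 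Hence it suffices to prove that for $i>0$ every Newton slope of $H^i_{\rm crys}(X/{\cal W})\otimes_{\cal W}K_0$ is $\geq 1$.

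To get the slope bound I would argue by decomposition of the diagonal. Since $\om_{X/\kap}^{-1}$ is ample, $X$ is rationally chain connected (Campana; Koll\'ar--Miyaoka--Mori), and this is preserved under any extension of the base field; therefore the Chow group of $0$-cycles of $X$ is universally trivial, i.e.\ $\deg\colon{\rm CH}_0(X_\Omega)\to{\mathbb Z}$ is an isomorphism for every algebraically closed $\Omega\supseteq\kap$. By the Bloch--Srinivas argument one then has, in ${\rm CH}^d(X\times_\kap X)_{\mathbb Q}$ with $d=\dim X$, a decomposition of the diagonal $[\Del_X]=[X\times\{x\}]+[\Gam]$ with $[\Gam]$ supported on $D\times X$ for some divisor $D\subsetneq X$. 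Letting these correspondences act on $H^i_{\rm crys}(X/{\cal W})\otimes_{\cal W}K_0$ (functoriality of crystalline cohomology under algebraic correspondences, Berthelot--Gros): for $i>0$ the summand $[X\times\{x\}]_\ast$ vanishes, so ${\rm id}=[\Gam]_\ast$ on $H^i$; but $[\Gam]_\ast$ factors through restriction to $D$, hence through the cohomology of an alteration $\widetilde D$ of $D$, so $H^i_{\rm crys}(X/{\cal W})\otimes_{\cal W}K_0$ is a subquotient of $H^{i-2}_{\rm crys}(\widetilde D/{\cal W})(-1)\otimes_{\cal W}K_0$ via a Gysin map (purity). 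Since the Tate twist $(-1)$ shifts Newton slopes up by $1$ and $H^{i-2}_{\rm crys}(\widetilde D/{\cal W})$ has nonnegative slopes, all slopes of $H^i_{\rm crys}(X/{\cal W})\otimes_{\cal W}K_0$ are $\geq 1$ for $i>0$; combined with the reduction above this gives $H^i(X,{\cal W}({\cal O}_X))\otimes_{\cal W}K_0=0$ for $i>0$.

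The two geometric inputs (Fano $\Rightarrow$ rationally chain connected, and the degeneration of the slope spectral sequence) are available off the shelf, so the technical heart is the middle step: setting up the action of algebraic correspondences on crystalline (or rigid) cohomology compatibly with the slope filtration, and in particular the purity statement that a cycle supported on a divisor produces only classes of coniveau $\geq 1$, hence of slope $\geq 1$. One must also handle torsion and the singularities of $D$ in characteristic $p$, but since we may work with ${\mathbb Q}$-coefficients throughout, de Jong alterations suffice. It is worth stressing that one cannot shortcut this by proving $H^i(X,{\cal O}_X)=0$: Kodaira-type vanishing can fail for Fano varieties in characteristic $p$, so the ampleness of $\om_X^{-1}$ must be exploited through the Frobenius/slope structure rather than through coherent cohomology directly --- which is precisely why the statement is phrased in terms of Witt-vector cohomology.
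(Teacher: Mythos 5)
Your sketch is essentially the argument of the cited source, not of this paper: (\ref{theo:kim}) is quoted here from \cite{ki} without proof, and the introduction explicitly contrasts the methods of this article with Kim's precisely because his proof uses the rational (chain) connectedness of Fano varieties, which is the engine of your proposal. Your reduction of the statement to a slope bound, via Illusie's identification of $H^i(X,{\cal W}({\cal O}_X))\otimes_{\cal W}K_0$ with the slope $[0,1)$ part of $H^i_{\rm crys}(X/{\cal W})_{K_0}$ (\cite{idw}; compare (\ref{ali:tk0}) in the rigid setting of \cite{bbe}), followed by rational chain connectedness, universal ${\mab Q}$-triviality of ${\rm CH}_0$, the Bloch--Srinivas decomposition of the diagonal, alterations and a Gysin/coniveau argument, is exactly the Esnault--Kim line of reasoning and is correct in outline. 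One bookkeeping point deserves care: with $\Gam$ supported on $D\times X$, the action $\al\mapsto {\rm pr}_{2*}({\rm pr}_1^*\al\cup[\Gam])$ merely factors through the restriction of $\al$ to (an alteration of) $D$, which by itself gives no slope bound, since $H^{i}(\widetilde{D})$ only has slopes $\geq 0$; to land in the Gysin image $H^{i-2}(\widetilde{D})(-1)\lo H^i(X)$, and hence get slopes $\geq 1$, you must act by the transpose correspondence (equivalently, arrange the support to be $X\times D$). With that fixed, your proof goes through. By contrast, the method this paper uses for its related results (\ref{theo:vc}) and (\ref{coro:npf}) involves no rational curves and no cycle-theoretic machinery at all: granted $h^F(\os{\circ}{X})<\infty$, the log Kodaira vanishing theorem (\ref{theo:stk}) gives $H^i(X,{\cal O}_X)=0$ for $i>0$, and the exact sequences (\ref{ali:yexyk}) then yield the integral vanishing $H^i(X,{\cal W}({\cal O}_X))=0$, which is stronger than the statement after $\otimes_{\cal W}K_0$ and makes sense for log smooth $X/s$ of vertical type; the price is the quasi-$F$-split hypothesis, which your (i.e.\ Kim's) argument does not need but which cannot be dropped from the paper's method, since Kodaira vanishing fails for certain Fano varieties in characteristic $p$.
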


In  \cite{gir} Gongyo, Nakamura and Tanaka 
have proved the following theorem generalizing 
(\ref{theo:esn}) for the 3-dimensional case 
by using methods of MMP(=minimal model program) 
in characteristic $p\geq 7$: 

\begin{theo}[{\bf \cite[Theorem (1.2), (1.3)]{gir}}]\label{theo:gnt} 
Let $\kap$ be as in {\rm (\ref{theo:kim})}. 
Assume that $p\geq 7$. 
Let $X$ be a geometrically connected proper variety over $\kap$.
Let $\Del$ be an effective ${\mab Q}$-Cartier divisor on $X$. 
Assume that $(X,\Del)$ is klt$($=Kawamata log terminal$)$ pair over $\kap$ 
and that $-(K_X+\Del)$ is a ${\mab Q}$-Cartier ample divisor on $X$, 
where $K_X$ is the canonical divisor on $X$. 
Then the following hold$:$ 
\par 
$(1)$  $H^i(X,{\cal W}({\cal O}_{X}))\otimes_{\cal W}{K_0}=0$ for $i>0$. 
\par 
$(2)$ Assume that $\kap={\mab F}_q$. Then 
$\# X({\mab F}_{q^k}) \equiv 1~{\rm mod}~ q^k$ $(k\in {\mab Z}_{\geq 1})$.
\end{theo}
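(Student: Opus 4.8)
The plan is to deduce part (2) from part (1) by the Lefschetz trace formula in rigid cohomology (rigid rather than crystalline because $X$ may be singular), imitating Kim's passage from (\ref{theo:kim}) to (\ref{theo:esn}), and to prove part (1) by showing that $X$ is rationally chain connected.

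For part (1) (where, as in \cite{gir}, $\dim X\le 3$): since the formation of $H^i(X,{\cal W}({\cal O}_X))\otimes_{\cal W}K_0$ is compatible with extension of the perfect ground field and $X$ is geometrically connected, I would first reduce to the case $\kap=\ol\kap$. The geometric core is to show that $X$ is rationally chain connected. In characteristic $0$ this is the theorem of Zhang and of Hacon--McKernan that klt log Fano pairs are rationally connected, and I would transpose their argument using the ingredients available when $\dim X\le 3$ and $p\ge 7$. Bend-and-break, valid in any characteristic, produces through every closed point of $X$ a rational curve of bounded $(-(K_X+\Del))$-degree. The cone and contraction theorems and the existence and termination of flips for klt threefolds (Birkar, Hacon--Xu, Cascini--Tanaka--Xu) allow one to run a $(K_X+\Del)$-negative minimal model program --- every extremal ray being $(K_X+\Del)$-negative since $-(K_X+\Del)$ is ample --- which ends in a Mori fibre space whose general fibre is a lower-dimensional klt log Fano pair and whose base is again of Fano type. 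An induction on dimension, using that each step of the program and the final fibration has rationally chain connected fibres, then yields rational chain connectedness of $X$. Since $\kap=\ol\kap$, this forces $CH_0(X)\cong{\mab Z}$; and by Berthelot--Bloch--Esnault --- via a resolution of $X$, the Bloch--Srinivas decomposition of its diagonal, and the identification (Illusie, Bloch) of Witt-vector cohomology with the part of crystalline cohomology of slopes $<1$ --- a $CH_0$-trivial proper variety $X$ satisfies $H^i(X,{\cal W}({\cal O}_X))\otimes_{\cal W}K_0=0$ for $i>0$. This proves part (1).

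For part (2), put $\kap={\mab F}_q$. The proper ${\mab F}_q$-variety $X$ satisfies the rigid-cohomology Lefschetz trace formula of \'Etesse--Le Stum,
\[\#X({\mab F}_{q^k})=\sum_{i\ge 0}(-1)^i\,\mathrm{Tr}\bigl(F^k\mid H^i_{\mathrm{rig}}(X/K_0)\bigr),\]
with $F$ the geometric Frobenius. By Dieudonn\'e--Manin, each $H^i_{\mathrm{rig}}(X/K_0)$ decomposes $F$-equivariantly into its part of slopes $<1$ and its part of slopes $\ge 1$; by Berthelot--Bloch--Esnault the former is $H^i(X,{\cal W}({\cal O}_X))\otimes_{\cal W}K_0$, which vanishes for $i>0$ by part (1). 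Hence for $i>0$ every eigenvalue of $F$ on $H^i_{\mathrm{rig}}(X/K_0)$ has $q$-adic valuation $\ge 1$, so the rational integer $\mathrm{Tr}(F^k\mid H^i_{\mathrm{rig}}(X/K_0))$ is divisible by $q^k$; meanwhile $H^0_{\mathrm{rig}}(X/K_0)=K_0$ with trivial Frobenius contributes $1$ because $X$ is geometrically connected. Therefore $\#X({\mab F}_{q^k})\equiv 1~{\rm mod}~q^k$.

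I expect the main obstacle to be the positive-characteristic birational geometry: establishing rational chain connectedness of a klt log Fano pair rests on the threefold minimal model program, which is precisely what imposes $\dim X\le 3$ and $p\ge 7$. The reduction to a $CH_0$-triviality statement and the rigid-cohomology trace-formula endgame are, by comparison, routine applications of now-standard machinery.
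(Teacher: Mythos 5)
First, a point of context: this paper does not prove (\ref{theo:gnt}) at all --- it is quoted from Gongyo--Nakamura--Tanaka \cite{gir} as background (and the statement here tacitly carries their hypothesis $\dim X\leq 3$, which you correctly reinstate), so your proposal has to be measured against the argument of \cite{gir} rather than against anything in this article. Your derivation of (2) from (1) is correct and standard: for proper $X$ the \'Etesse--Le Stum trace formula together with the Berthelot--Bloch--Esnault identification of the slope $<1$ part of $H^i_{\rm rig,c}$ with $H^i(X,{\cal W}({\cal O}_X))_{K_0}$ gives exactly the congruence, and this is the same mechanism used in \cite{gir} and in \S\ref{sec:pss}--\ref{sec:psso} of this paper (cf.\ (\ref{ali:yftrys}) and (\ref{ali:yftrays})); the only imprecision is that the individual traces lie in $K_0({\mab F}_q)$ rather than in ${\mab Z}$, so the congruence should first be read in $\ol{{\cal W}({\mab F}_q)}$ and then descends to ${\mab Z}$.

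The proof you propose for (1), however, has a genuine gap at the Chow-theoretic endgame. The Esnault/Berthelot--Bloch--Esnault implication ``${\rm CH}_0$ trivial $\Rightarrow$ slope $<1$ cohomology concentrated in degree $0$'' (\cite{es}, \cite{bbe}) is a theorem about \emph{smooth} proper varieties: the decomposition of the diagonal must act on cohomology through correspondences, which uses smoothness and duality. Your $X$ is klt, hence singular, and you propose to pass to a resolution $\pi\colon \widetilde{X}\to X$. But rational chain connectedness is not a birational invariant (cones over elliptic curves already show this), so ${\rm CH}_0(\widetilde{X}_L)={\mab Z}$ does \emph{not} follow from rational chain connectedness of $X$; in characteristic $0$ one would invoke Hacon--McKernan to get rational connectedness of the resolution, and no such result is available here --- it is of essentially the same depth as the theorem being proved. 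Moreover, even granting the vanishing on $\widetilde{X}$, descending it to $X$ requires $R^i\pi_*({\cal W}({\cal O}_{\widetilde{X}}))_{\mab Q}=0$ for $i>0$, i.e.\ a Witt-vector rationality statement for klt threefold singularities in characteristic $p\geq 7$, which is a substantive theorem and not ``routine machinery''. This is precisely why \cite{gir} does not argue via ${\rm CH}_0$-triviality of a resolution: they prove the vanishing of $H^i(X,{\cal W}({\cal O}_X))_{\mab Q}$ directly along the minimal model program, establishing invariance of this cohomology under the steps of the MMP and a relative vanishing for the resulting Mori fibre space, with induction on dimension. To repair your argument you would need to supply both missing ingredients (${\rm CH}_0$-triviality of $\widetilde{X}$ over all algebraically closed extensions, and the comparison $H^i(X,{\cal W}({\cal O}_X))_{\mab Q}\simeq H^i(\widetilde{X},{\cal W}({\cal O}_{\widetilde{X}}))_{\mab Q}$), or else follow the MMP-based vanishing route of \cite{gir}.
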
  

\parno 
See \cite{nt} for the case where $-(K_X+\Del)$ is nef and big 
and $(X,\Del)$ is log canonical. 
\par 
In this article we give other generalizations of the Theorems 
(\ref{theo:esn}) and (\ref{theo:kim})  
under the assumption of certain finiteness: 
we give the definition of a log Fano variety and 
we prove a log and stronger version (\ref{theo:vc}) below of Kim's theorem 
under the assumption 
as a really immediate good application of a recent result:  
Nakkajima-Yobuko's Kodaira vanishing theorem for 
a quasi-$F$-split projective log smooth scheme of vertical type (\cite{ny}). 
In this vanishing theorem, 
we use theory of log structures due to Fontaine-Illusie-Kato 
(\cite{klog1}, \cite{klog2}) essentially. 
(See \S\ref{sec:psso} for the precise statement of this vanishing theorem.) 
As a corollary of (\ref{theo:vc}), we obtain the  
congruence of the cardinality of rational points of a log Fano variety
over the log point of ${\mab F}_q$ ((\ref{coro:npf}) below).  
\par 
To state our result (\ref{theo:vc}),  
we first recall the notion of the quasi-Frobenius splitting height due to Yobuko, 
which plays an important role for log Fano varieties in this article. 
\par 
Let $Y$ be a scheme of characteristic $p>0$. 
Let $F_Y\col Y\lo Y$ be the Frobenius endomorphism of $Y$. 
Set $F:={\cal W}_n(F_Y^*)\col 
{\cal W}_n({\cal O}_Y)\lo F_{Y*}({\cal W}_n({\cal O}_Y))$. 
This is a morphism of ${\cal W}_n({\cal O}_Y)$-modules. 
In \cite{y} Yobuko has introduced the notion of the 
quasi-Frobenius splitting height $h^F(Y)$ for $Y$. 
(In [loc.~cit.] he has denoted it by ${\rm ht}^S(Y)$.) 
It is the minimum of positive integers $n$'s such that 
there exists a morphism 
$\rho \col F_{Y*}({\cal W}_n({\cal O}_Y))\lo {\cal O}_Y$ 
of ${\cal W}_n({\cal O}_Y)$-modules 
such that 
$\rho \circ F\col {\cal W}_n({\cal O}_Y)\lo {\cal O}_Y$ 
is the natural projection. 
(If there does not exist such $n$, then  we set $h^F(Y)=\infty$.)  
This is a highly nontrivial generalization of the notion of the Frobenius splitting 
by Mehta and Ramanathan in \cite{mr} 
because they have said that, for a scheme $Z$ of characteristic $p>0$, 
$Z$ is a Frobenius splitting(=$F$-split) scheme if 
$F\col {\cal O}_Z\lo F_{Z*}({\cal O}_Z)$ has a section of ${\cal O}_Z$-modules. 
Because the terminology ``quasi Frobenius splitting height'' is too long, 
we call this {\it Yobuko height}. 
\par 
Let $\kap$ be a perfect field of characteristic $p>0$. 
Let $s$ be a log scheme whose underlying scheme 
is ${\rm Spec}(\kap)$ and whose log structure 
is associated to a morphism ${\mab N}\owns 1\lom a\in \kap$ 
for some $a\in \kap$. That is, $s$ is the log point of $\kap$ or 
$({\rm Spec}(\kap),\kap^*)$. 
Let $X/s$ be a proper (not necessarily projective) 
log smooth scheme of pure dimension $d$ 
of vertical type with log structure 
$(M_X,\al \col M_X\lo {\cal O}_X)$. 
Here ``vertical type'' means that 
$\al({\cal I}_{X/s}){\cal O}_X={\cal O}_X$, 
where ${\cal I}_{X/s}$ is Tsuji's ideal sheaf of the log structure $M_X$ of $X$ 
defined in \cite{tsp} and denoted by $I_f$ in [loc.~cit.], 
where $f\col X\lo s$ is the structural morphism.  
(In \S\ref{sec:psso} below we recall the definition of ${\cal I}_{X/s}$.) 
For example, the product of (locally) simple normal crossing log schemes 
over $s$ defined in \cite{nlk3}, \cite{ny} and \cite{nlw} is of vertical type. 
Let $\os{\circ}{X}$ be the underlying scheme of $X$. 
Let $\Om^i_{X/s}$ be the sheaf of log differential forms of degree $i$ on $\os{\circ}{X}$, 
which has been denoted by $\om^i_{X/s}$ in \cite{klog1}. 
Set $\om_{X/s}:=\Om^d_{X/s}$. 
We say that $X/s$ is a {\it log Fano scheme} 
if $\om_{X/s}^{-1}$ is ample.  
Moreover, if $\os{\circ}{X}$ is geometrically connected, then 
we say that $X/s$ a {\it log Fano variety}.
\par 
In this article we prove the following: 

\begin{theo}\label{theo:vc} 
Let $X/s$ be a log Fano scheme. 
Assume that $h^F(\os{\circ}{X})<\infty$. 
Then $H^i(X,{\cal W}_n({\cal O}_X))=0$ for $i>0$ and for $n>0$. 
Consequently $H^i(X,{\cal W}({\cal O}_X))=0$ for $i>0$.
\end{theo}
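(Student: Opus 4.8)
The plan is to deduce the vanishing of $H^i(X,{\cal W}_n({\cal O}_X))$ for $i>0$ by induction on $n$, using the exact sequences relating ${\cal W}_n({\cal O}_X)$ to ${\cal O}_X$ together with the Kodaira-type vanishing theorem of Nakkajima--Yobuko (\cite{ny}) for quasi-$F$-split projective log smooth schemes of vertical type. The case $n=1$ is the statement $H^i(X,{\cal O}_X)=0$ for $i>0$, which should follow from the log Kodaira vanishing theorem recalled in \S\ref{sec:psso}: since $\om_{X/s}^{-1}$ is ample and $\os{\circ}{X}$ has finite Yobuko height (so $X/s$ is quasi-$F$-split) and is of vertical type, the vanishing theorem applied to the ample line bundle $\om_{X/s}^{-1}$ gives $H^i(X,\om_{X/s}\otimes\om_{X/s}^{-1})=H^i(X,{\cal O}_X)=0$ for $i<d$; for $i=d$ one uses log Serre duality, $H^d(X,{\cal O}_X)\cong H^0(X,\om_{X/s})^{\vee}$, which vanishes because $\om_{X/s}^{-1}$ ample forces $\om_{X/s}$ to have no nonzero global sections (its degree is negative on every curve, using geometric connectedness after base change).

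For the inductive step, I would use the exact sequence of sheaves of ${\cal W}_n({\cal O}_X)$-modules
\[
0\lo {\cal W}_{n-1}({\cal O}_X)\os{\underline{V}}{\lo} {\cal W}_n({\cal O}_X)\os{R^{n-1}}{\lo} {\cal O}_X\lo 0,
\]
coming from the Verschiebung and restriction maps. Taking cohomology and using $H^i(X,{\cal O}_X)=0$ for $i>0$ (the $n=1$ case) together with the induction hypothesis $H^i(X,{\cal W}_{n-1}({\cal O}_X))=0$ for $i>0$, the long exact sequence immediately yields $H^i(X,{\cal W}_n({\cal O}_X))=0$ for $i>0$. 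Here I am using that ${\cal W}_n({\cal O}_X)$ depends only on the underlying scheme $\os{\circ}{X}$, and that the sheaves in question are coherent on a Noetherian scheme so the cohomology groups are finitely generated over ${\cal W}_n(\kap)$; in particular there is no issue in passing to the limit. Finally, ${\cal W}({\cal O}_X)=\varprojlim_n {\cal W}_n({\cal O}_X)$, and since the transition maps $R$ are surjective and each $H^i(X,{\cal W}_n({\cal O}_X))$ vanishes for $i>0$, the Milnor exact sequence
\[
0\lo {\varprojlim_n}^1 H^{i-1}(X,{\cal W}_n({\cal O}_X))\lo H^i(X,{\cal W}({\cal O}_X))\lo \varprojlim_n H^i(X,{\cal W}_n({\cal O}_X))\lo 0
\]
gives $H^i(X,{\cal W}({\cal O}_X))=0$ for $i>0$; the $\varprojlim^1$ term vanishes because the relevant system $H^{i-1}(X,{\cal W}_n({\cal O}_X))$ is either zero (if $i\geq 2$) or, for $i=1$, is the system $H^0(X,{\cal W}_n({\cal O}_X))$, which has surjective transition maps and hence satisfies the Mittag-Leffler condition.

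The main obstacle I anticipate is the base case $H^i(X,{\cal O}_X)=0$, and within it the precise formulation and applicability of the Nakkajima--Yobuko log Kodaira vanishing theorem: one must check that a log Fano scheme in the sense defined above (proper, log smooth over the log point $s$, of vertical type, with $\om_{X/s}^{-1}$ ample) fits the hypotheses of that theorem, in particular that ampleness of $\om_{X/s}^{-1}$ on the possibly non-projective $\os{\circ}{X}$ still makes $\os{\circ}{X}$ projective (ampleness of a line bundle on a proper scheme forces projectivity), so that the projectivity hypothesis of the vanishing theorem is met, and that finiteness of the Yobuko height $h^F(\os{\circ}{X})$ is exactly the quasi-$F$-split condition required. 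The handling of the top degree $i=d$ via log Serre duality for $\om_{X/s}$ on a log smooth scheme of vertical type is the other point that needs care, since one must ensure the dualizing sheaf is indeed $\om_{X/s}=\Om^d_{X/s}$ in this log setting; this is standard for log smooth schemes but worth citing explicitly.
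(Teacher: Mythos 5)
Your proposal follows essentially the same route as the paper: apply the Nakkajima--Yobuko log Kodaira vanishing theorem (\ref{theo:stk}) with ${\cal L}=\om_{X/s}^{-1}$ (projectivity being supplied by the ampleness of $\om_{X/s}^{-1}$) to get $H^i(X,{\cal O}_X)=0$ for $i>0$, then induct on $n$ via the exact sequence $0\to {\cal W}_{n-1}({\cal O}_X)\to {\cal W}_n({\cal O}_X)\to {\cal O}_X\to 0$ and pass to the inverse limit. The only differences are harmless: the vanishing theorem as recalled in the paper already gives $H^i(X,\om_{X/s}\otimes_{{\cal O}_X}{\cal L})=0$ in every degree $i>0$, so your separate treatment of $i=d$ by log Serre duality is unnecessary, and your Mittag-Leffler justification of the $\varprojlim$ step merely spells out what the paper leaves implicit.
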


As mentioned above, 
we obtain this theorem immediately by using  
Nakkajima-Yobuko's Kodaira vanishing theorem for 
a quasi-$F$-split projective log smooth scheme of 
vertical type (\cite{ny}).  
As a corollary of this theorem, 
we obtain the following: 

\begin{coro}\label{coro:npf}
Let $X/s$ be a log Fano variety. 
Assume that $\kap={\mab F}_q$ and 
that $h^F(\os{\circ}{X})<\infty$. 
Then  
\begin{align*}  
\# \os{\circ}{X}({\mab F}_{q^k}) \equiv 1~{\rm mod}~ q^k \quad (k\in {\mab Z}_{\geq 1}). 
\tag{1.6.1}\label{ali:xfq} 
\end{align*}
In particular $\os{\circ}{X}({\mab F}_q)\not= \emptyset$. 
\end{coro}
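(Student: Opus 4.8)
The plan is to deduce (\ref{coro:npf}) from (\ref{theo:vc}) by feeding the vanishing it provides into the Lefschetz trace formula for the Witt-vector cohomology of proper (possibly singular, possibly non-reduced) schemes over ${\mab F}_q$ due to Berthelot, Bloch and Esnault. The first step is the elementary observation that the sheaf ${\cal W}_n({\cal O}_X)$, and hence ${\cal W}({\cal O}_X)=\vpl_n {\cal W}_n({\cal O}_X)$, depends only on the underlying scheme $\os{\circ}{X}$: indeed ${\cal W}_n({\cal O}_X)={\cal W}_n({\cal O}_{\os{\circ}{X}})$ as sheaves of rings. Hence (\ref{theo:vc}) says precisely that $H^i(\os{\circ}{X},{\cal W}({\cal O}_{\os{\circ}{X}}))=0$ for $i>0$, and a fortiori that $H^i(\os{\circ}{X},{\cal W}({\cal O}_{\os{\circ}{X}}))\otimes_{\cal W}K_0=0$ for $i>0$, where $\os{\circ}{X}$ is a geometrically connected proper scheme over ${\mab F}_q$.

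Next I would recall the two inputs from Berthelot--Bloch--Esnault. First, for any proper scheme $Y$ over ${\mab F}_q$ there is a functorial, Frobenius-equivariant isomorphism $H^i(Y,{\cal W}({\cal O}_Y))\otimes_{\cal W}K_0\os{\sim}{\lo}H^i_{\rm rig}(Y/K_0)^{<1}$ identifying Witt-vector cohomology with the part of rigid cohomology of slope $<1$ (for proper $Y$, rigid cohomology with and without compact supports coincide, so this is unambiguous). Second, the Grothendieck--Lefschetz trace formula in rigid cohomology: $\#Y({\mab F}_{q^k})=\sum_i(-1)^i{\rm Tr}(F^k\mid H^i_{\rm rig}(Y/K_0))$, where $F$ denotes the geometric $q$-power Frobenius, a $K_0$-linear endomorphism.

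I would then apply these with $Y=\os{\circ}{X}$, splitting each $H^i_{\rm rig}(\os{\circ}{X}/K_0)$ into its slope-$<1$ and slope-$\geq 1$ parts. On the slope-$<1$ side, the first input combined with (\ref{theo:vc}) kills every $i>0$; and since $\os{\circ}{X}$ is geometrically connected, $H^0_{\rm rig}(\os{\circ}{X}/K_0)=K_0$ is entirely of slope $0$, with $F=\sig^e$ acting as ${\rm id}_{K_0}$, so its contribution to the trace formula is ${\rm Tr}(F^k\mid H^0)=1$. On the slope-$\geq 1$ side, every eigenvalue $\al$ of $F$ satisfies ${\rm ord}_q(\al)\geq 1$, hence ${\rm ord}_q(\al^k)\geq k$, so each ${\rm Tr}(F^k\mid H^i_{\rm rig}(\os{\circ}{X}/K_0)^{\geq 1})$ lies in $q^k{\cal W}$. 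Adding up, $\#\os{\circ}{X}({\mab F}_{q^k})-1$ is a rational integer lying in $q^k{\cal W}$, hence in $q^k{\mab Z}$; this is (\ref{ali:xfq}), and the case $k=1$ gives $\os{\circ}{X}({\mab F}_q)\not=\emptyset$.

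The only point that needs care is the correct invocation of the Berthelot--Bloch--Esnault comparison and trace formula for the generally singular (and possibly non-reduced) scheme $\os{\circ}{X}$, together with the slope bookkeeping that confines the slope-$\geq 1$ contributions to $q^k{\cal W}$ and the slope-$<1$ ones to the range where (\ref{theo:vc}) has force; when $\os{\circ}{X}$ happens to be smooth over ${\mab F}_q$, this is exactly Esnault's argument for (\ref{theo:esn}) (see \cite{es}). Granting these by-now-standard inputs, the deduction is purely formal, which is why (\ref{coro:npf}) is, as announced, an immediate consequence of (\ref{theo:vc}).
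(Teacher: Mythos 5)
Your proposal is correct and follows essentially the same route as the paper: Theorem (\ref{theo:vc}) kills the higher Witt-vector cohomology, the Berthelot--Bloch--Esnault isomorphism identifies this with the slope $<1$ part of rigid cohomology, the \'Etesse--Le Stum trace formula plus the slope bookkeeping confines all remaining contributions except the $H^0$-term (which equals $1$ by geometric connectedness) to multiples of $q^k$. The only cosmetic difference is that the paper reduces to $k=1$ by base change to ${\mab F}_{q^k}$ while you treat each $k$ directly via $F^k$; both are fine.
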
 

This is a generalization of Esnault's theorem (\ref{theo:esn})
under the assumption of the finiteness of the Yobuko height. 
To derive (\ref{coro:npf}) from (\ref{theo:vc}), 
we use 
\smallskip 
\par 
(A): \'{E}tess-Le Stum's Lefschetz trace formula for rigid cohomology   
(with compact support) (\cite{el}) 
\medskip 
\parno 
and 
\medskip 
\par 
(B) Berthelot-Bloch-Esnault's calculation of the slope $< 1$-part of the rigid cohomology 
(with compact support) via Witt sheaves (\cite{bbe})
\medskip 
\parno
as in \cite{bbe}, \cite{gir} and \cite{nt}. 
However our proofs of (\ref{theo:vc}) and (\ref{coro:npf}) are 
very different from Esnault's, Kim's 
and Gongyo-Nakamura-Tanaka's proofs of 
(\ref{theo:esn}), (\ref{theo:kim}) and (\ref{theo:gnt}) 
in their articles because 
we do not use the rational connectedness of 
a Fano variety which has been used in them. 

\par 
We guess that the assumption of the finiteness of the 
Yobuko height
is not a strong one for log Fano schemes. 
However this assumption is not always satisfied for smooth Fano 
schemes because the Kodaira vanishing holds if the Yobuko height
is finite and because the Kodaira vanishing does not hold for 
certain Fano varieties (\cite{lr}, \cite{hl}, \cite{to}); 
the Yobuko heights  
of them are infinity. 
Hence to calculate the Yobuko heights 
of (log) Fano schemes is a very interesting problem.  
\par 
The conclusion of (\ref{coro:npf}) holds for a proper scheme $Y/{\mab F}_q$ 
such that $H^i(Y,{\cal O}_Y)=0$ $(i>0)$. 
H.~Tanaka has kindly told me that it is not known whether there exists an example 
of a smooth Fano variety over $\kap$ for 
which this vanishing of the cohomologies does not hold. 
(In \cite{j} Joshi has already pointed out this; 
Shepherd-Barron has already proved that this vanishing holds 
for a smooth Fano variety of dimension 3 (\cite[(1.5)]{sb})).



\smallskip
\par 
On the other hand, it is not clear at all that there is a precise rule as above 
about congruences of the cardinalities of the rational points 
of varieties except Fano varieties. 
One may think that there is no rule for them. 
In this article we show that this is not the case for 
log Calabi-Yau varieties over $s$ of any dimension when 
$\os{\circ}{s}={\rm Spec}({\mab F}_q)$; 
we are more interested in the cardinalities of 
the rational points of log Calabi-Yau varieties than those of 
log Fano varieties. 
\par 
First let us recall the following suggestive observation, which  
seems well-known (\cite{bth}).   
\par 
Let $E$ be an elliptic curve over ${\mab F}_p$. 
It is well-known that $E$ is nonordinary if and only if 
\begin{align*} 
\#E({\mab F}_p)=p+1
\tag{1.6.2}\label{ali:pp1}
\end{align*}  
if $p\geq 5$. 
By the purity of the weight for $E/{\mab F}_p$: 
\begin{align*} 
\vert \#E({\mab F}_p)- (p+1)\vert \leq 2\sqrt{p}, 
\tag{1.6.3}\label{ali:ppwt1}
\end{align*}
this equality is equivalent to a congruence 
\begin{align*} 
\#E({\mab F}_p)\equiv 1~{\rm mod}~p
\tag{1.6.4}\label{ali:ppwcg}
\end{align*}
since $\sqrt{p}>2$. 
\par 
In this article we generalize 
the congruence (\ref{ali:ppwcg}) 
for higher dimensional (log) varieties as follows.
(We also generalize (\ref{ali:pp1}) for 
for any nonordinary  elliptic curve over 
${\mab F}_q$ when $p\geq 5$.)    
\par 
Let $X/s$ be a proper (not necessarily projective) 
simple normal crossing log scheme of pure dimension $d$. 
Recall that, in \cite{ny}, 
we have said that $X/s$ is a log Calabi-Yau scheme of pure dimension $d$ if 
$H^i(X,{\cal O}_X)=0$ $(0< i< d)$ and $\om_{X/s}\simeq {\cal O}_X$.  
Moreover, if $\os{\circ}{X}$ is geometrically connected, then we say that 
$X/s$ is a log Calabi-Yau variety of pure dimension $d$. 
(This is a generalization of a log K3 surface defined in \cite{nlk3}.)
Note that $H^d(X,{\cal O}_X) =H^d(X,\om_{X/s})\simeq \kap$.  
The last isomorphism is obtained by log Serre duality of Tsuji 
(\cite[(2.21)]{tsp}). 
More generally, we consider a proper scheme $Y$ of 
pure dimension $d$ satisfying only 
the following four conditions: 
\medskip 
\par 
(a) $H^0(Y,{\cal O}_Y)={\kap}$,  
\par 
(b) $H^i(Y,{\cal W}({\cal O}_Y))_{K_0}=0$ for $0<i<d-1$, 
\par  
(c) $H^{d-1}(Y,{\cal O}_Y)=0$ if $d\geq 2$, 
\par  
(d) $H^d(Y,{\cal O}_Y)\simeq {\kap}$. 
\medskip 
\parno 
Let $\Phi_{Y/\kap}$
be the Artin-Mazur formal group of $Y/\kap$ in degree $d$, 
that is, $\Phi_{Y/\kap}$ is the following functor: 
$$\Phi_{Y/\kap}(A):=\Phi^d_{Y/\kap}(A):={\rm Ker}(H^d_{\rm et}
(Y\otimes_{\kap}A,{\mab G}_m)\lo 
H^d_{\rm et}(Y,{\mab G}_m)) \in ({\rm Ab})$$
for artinian local $\kap$-algebras $A$'s with residue fields $\kap$. 
Then $\Phi_{Y/\kap}$ is pro-represented by 
a commutative formal Lie group over $\kap$ (\cite{am}). 
Denote the height of $\Phi_{Y/\kap}$ by $h(Y/\kap)$.  
We prove the following$:$


\begin{theo}\label{theo:xfh}
Let $Y/\kap$ be as above. 
Assume that $\kap ={\mab F}_q$.
Set $h:=h(Y/{\mab F}_q)$. 
Then the following hold$:$ 
\par 
$(1)$ Assume that $h=\infty$. 
Then 
\begin{equation*}
\# Y({\mab F}_{q^k}) \equiv
1~{\rm mod}~ q^k \quad (k\in {\mab Z}_{\geq 1}).    
\tag{1.7.1}\label{eqn:kfdintd}
\end{equation*}
In particular, $Y({\mab F}_{q})\not= \emptyset$. 
\par 
$(2)$ Assume that $2\leq h<\infty$. 
Let $\lceil~\rceil$ 
be the ceiling function$:$  
$\lceil x \rceil:=\min\{n\in {\mab Z}~\vert~x\leq n\}$. 
Then 
\begin{equation*}
\# Y({\mab F}_{q^{k}}) \equiv1~{\rm mod}~ p^{\lceil ek(1-h^{-1})\rceil}
\quad (k\in {\mab Z}_{\geq 1}).    
\tag{1.7.2}\label{eqn:kfd2td}
\end{equation*}
In particular, $Y({\mab F}_{q})\not= \emptyset$ 
$($recall that $e=\log_pq)$.  
\par 
$(3)$ Assume that $h=1$. Then 
\begin{equation*}
\# Y({\mab F}_{q^{k}}) \not\equiv1~{\rm mod}~ p 
\quad (k\in {\mab Z}_{\geq 1}).    
\tag{1.7.3}\label{eqn:kfdbntd}
\end{equation*}
$($In particular $Y({\mab F}_{q^k})$ can be empty.$)$ 
\end{theo}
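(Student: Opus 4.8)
The plan is to run the Lefschetz trace formula~(A) against the slope~$<1$ calculation~(B), and then to read off the relevant part of the rigid cohomology of $Y$ from the hypotheses (a)--(d) together with the theory of the Artin--Mazur formal group. Write $F$ for the ($K_0$-linear) $q$-th power Frobenius acting on $H^i_{{\rm rig},c}(Y/K_0)$, so that~(A) gives, for $k\in{\mab Z}_{\geq 1}$,
\begin{equation*}
\# Y({\mab F}_{q^k})=\sum_i(-1)^i\,{\rm Tr}\bigl(F^k\mid H^i_{{\rm rig},c}(Y/K_0)\bigr).
\end{equation*}
I would decompose each term along the canonical slope filtration into its slope~$<1$ part and its slope~$\geq 1$ part. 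An eigenvalue $\bet$ of $F$ on the slope~$\geq 1$ part satisfies $v_p(\bet)\geq v_p(q)=e$, whence $v_p(\bet^k)\geq ek$, so the slope~$\geq 1$ contributions lie in $q^k{\cal O}_{K_0}$; and by~(B) the slope~$<1$ part of $H^i_{{\rm rig},c}(Y/K_0)$ is, compatibly with $F$, isomorphic to $H^i(Y,{\cal W}({\cal O}_Y))_{K_0}$. Hence
\begin{equation*}
\# Y({\mab F}_{q^k})\equiv\sum_i(-1)^i\,{\rm Tr}\bigl(F^k\mid H^i(Y,{\cal W}({\cal O}_Y))_{K_0}\bigr)\quad\text{in}\ {\cal O}_{K_0}/q^k{\cal O}_{K_0}.
\end{equation*}

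The next step is to evaluate this sum. By (a), $Y$ is geometrically connected, so $H^0(Y,{\cal W}({\cal O}_Y))_{K_0}=K_0$ with $F$ acting as the identity, contributing $1$. By (b), $H^i(Y,{\cal W}({\cal O}_Y))_{K_0}=0$ for $0<i<d-1$. Using (c) and the $V$-filtration one gets $H^{d-1}(Y,W_n{\cal O}_Y)=0$ for all $n$ (when $d\geq2$), and since $\kap={\mab F}_q$ is finite every $H^\bullet(Y,W_n{\cal O}_Y)$ is finite, so the relevant ${\varprojlim}^{1}$-terms vanish and $H^{d-1}(Y,{\cal W}({\cal O}_Y))=0$; the same reasoning gives $H^i(Y,{\cal W}({\cal O}_Y))=0$ for $i>d$, since ${\cal O}_Y$ has coherent cohomological dimension $\leq d$. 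Finally, by Artin--Mazur (pro-representability and formal smoothness of $\Phi:=\Phi_{Y/\kap}$ follow from (c), one-dimensionality from (d)), $H^d(Y,{\cal W}({\cal O}_Y))$ is the Dieudonn\'e module of the one-dimensional formal Lie group $\Phi$. Thus
\begin{equation*}
\# Y({\mab F}_{q^k})\equiv1+(-1)^d\,{\rm Tr}\bigl(F^k\mid H^d(Y,{\cal W}({\cal O}_Y))_{K_0}\bigr)\quad\text{in}\ {\cal O}_{K_0}/q^k{\cal O}_{K_0}.
\end{equation*}

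The three cases then follow from the structure of the one-dimensional formal group $\Phi$. If $h=\infty$ then $[p]_\Phi=0$, so the Dieudonn\'e module of $\Phi$ is killed by $p$ and $H^d(Y,{\cal W}({\cal O}_Y))_{K_0}=0$; intersecting the congruence with ${\mab Z}$ (using $q^k{\cal O}_{K_0}\cap{\mab Z}=q^k{\mab Z}$) gives (\ref{eqn:kfdintd}). If $1\leq h<\infty$, then $H^d(Y,{\cal W}({\cal O}_Y))_{K_0}$ has $K_0$-dimension $h$ and $F$ acts on it with every eigenvalue of $p$-adic valuation $e(1-h^{-1})$, so $v_p\bigl({\rm Tr}(F^k\mid H^d(Y,{\cal W}({\cal O}_Y))_{K_0})\bigr)\geq ek(1-h^{-1})$. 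For $2\leq h<\infty$ we also have $ek=v_p(q^k)\geq\lceil ek(1-h^{-1})\rceil$, so reducing the congruence modulo $p^{\lceil ek(1-h^{-1})\rceil}$ annihilates the extra term and yields (\ref{eqn:kfd2td}) (using once more that $\# Y({\mab F}_{q^k})-1\in{\mab Z}$ and that $K_0/{\mab Q}_p$ is unramified). For $h=1$ the valuation in question is $0$, so $F$ acts on the one-dimensional space $H^d(Y,{\cal W}({\cal O}_Y))_{K_0}$ by a unit $\al\in{\cal O}_{K_0}^\times$; reducing modulo $p$ gives $\# Y({\mab F}_{q^{k}})\equiv1+(-1)^d\al^k\not\equiv1~{\rm mod}~p$ (since $\al^k$ is a unit), which is (\ref{eqn:kfdbntd}). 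In cases $(1)$ and $(2)$ the modulus is $>1$ (for $(2)$ because $e(1-h^{-1})\geq 1/2$ when $h\geq2$), so $\# Y({\mab F}_q)\neq0$; in case $(3)$ no such conclusion holds, consistent with $\# Y$ possibly being $0$.

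The step I expect to be the real obstacle is the one invoked in the second paragraph: identifying $H^d(Y,{\cal W}({\cal O}_Y))_{K_0}$ with the rational Dieudonn\'e module of $\Phi_{Y/\kap}$ and establishing the accompanying slope statement ($K_0$-dimension $h$ and all eigenvalues of valuation $e(1-h^{-1})$ when $h<\infty$, trivial when $h=\infty$). Since $Y$ is assumed only proper, one must check that Artin--Mazur's pro-representability results and the comparison with de Rham--Witt cohomology go through under (a)--(d) with no smoothness hypothesis; granting this, the remainder is routine. A secondary, bookkeeping point is to confirm that the inputs~(A) and~(B) are indeed available for a possibly singular proper $Y$ over a finite field, and that the congruences, a priori only in ${\cal O}_{K_0}$, descend to honest congruences of integers.
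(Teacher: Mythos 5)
Your argument is correct and follows essentially the same route as the paper: \'{E}tesse--Le Stum plus Berthelot--Bloch--Esnault reduce the point count mod $q^k$ to traces on $H^i(Y,{\cal W}({\cal O}_Y))_{K_0}$, conditions (a)--(d) together with Artin--Mazur leave only the $i=0$ and $i=d$ terms, and the three cases follow from the slope $1-h^{-1}$ of the one-dimensional formal group of height $h$ (the paper proves this slope statement as its Proposition (4.1), which you cite as standard). The only cosmetic differences are that the paper first reduces to $k=1$ by base change, and in case (1) it passes through $\ol{Y}$ and base change of Witt vector cohomology to get $H^d(Y,{\cal W}({\cal O}_Y))_{K_0}=0$, while in case (3) it argues via the isomorphism $H^d(Y,{\cal W}({\cal O}_Y))/p\simeq H^d(Y,{\cal O}_Y)$ rather than directly via the unit eigenvalue on the rank-one Dieudonn\'{e} module.
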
  
To give the statement (\ref{theo:xfh}) is a highly nontrivial work.  
However the proof of (\ref{theo:xfh}) is not difficult. 
(It does not matter whether the proof is not difficult.)
As far as we know, (\ref{theo:xfh}) even in the 2-dimensional trivial logarithmic  
and smooth case, i.~e., the case of K3 surfaces over finite fields, 
is a new result. Even in the case $d=1$, 
$Y$ need not be assumed to be an elliptic curve over ${\mab F}_q$. 
\par
The heights of Artin-Mazur formal groups 
describe the different phenomena 
about the congruences of rational points for schemes satisfying 
four conditions (a), (b), (c) and (d).  
\par 
By using (\ref{theo:xfh}), 
we raise an important problem how 
the certain supersingular prime ideals are distributed for 
a smooth Calabi-Yau variety of dimension 
less than or equal to $2$ over a number field.   
(I think that there is no relation with Sato-Tate conjecture in non-CM cases.)
\par 
To obtain (\ref{theo:xfh}), we use the theorems 
(A) and (B) explained after (\ref{coro:npf}) again 
and the determination of the slopes of the Dieudonn\'{e} module 
$D(\Phi_{Y/\kap})$ of $\Phi_{Y/\kap}$. 

\par 
The contents of this article are as follows. 
\par 
In \S\ref{sec:pss} we recall 
\'{E}tess-Le Stum's Lefschetz trace formula for rigid cohomology, 
Berthelot-Bloch-Esnault's theorem and 
the congruence of the cardinality of  
rational points of a separated scheme of finite type over a finite field.  
\par 
In \S\ref{sec:psso} we prove 
(\ref{theo:vc}) and (\ref{coro:npf}). 
\par 
In \S\ref{sec:cy} we prove (\ref{theo:xfh}). 
We also raise the important problem about  
the distribution of supersingular primes already mentioned. 
\par 
In \S\ref{sec:tkz} we give the formulas of two kinds of zeta functions of 
a few projective SNCL(=simple normal crossing log) schemes 
over the log point of a finite field.  One kind of them gives us 
examples of the conclusions of the congruences in 
(\ref{coro:npf}) and (\ref{theo:xfh}). 
\par 
In \S\ref{rema:arkg} 
we give a remark on 
Van der Geer and Katsura's characterization of the height $h(Y/\kap)$ 
(\cite{vgk}). 

\par
\bigskip
\parno
{\bf Acknowledgment.} 
I have begun this work after listening to 
Y.~Nakamaura's very clear talk in which 
the main theorem in \cite{nt} has been explained
in the conference ``Higher dimensional algebraic geometry'' 
of Y.~Kawamata in March 2018 at Tokyo University. 
The talk of Y.~Gongyo in January 2017 at Tokyo Denki university 
for the explanation of the main theorem in \cite{gir} 
has given a very good influence to this article. 
Without their talks, I have not begun this work. 
I would like to express sincere gratitude to them.  
I would also like to express sincere thanks to 
H.~Tanaka and S.~Ejiri for their kindness for informing me  of 
the articles \cite{lr}, \cite{hl}, \cite{to} and giving me an important remark.  
\bigskip
\par\noindent
{\bf Notations.}  
(1) For an element $a$ of a commutative ring $A$ with unit element and 
for an $A$-modules $M$, $M/a$ denotes $M/aM$.   
\par 
(2) For a finite field ${\mab F}_q$, 
$s_{{\mab F}_q}$ denotes 
the log point whose underlying scheme 
is ${\rm Spec}({\mab F}_q)$. 

\section{Preliminaries}\label{sec:pss} 
In this section we recall \'{E}tess-Le Stum's Lefschetz trace formula 
for rigid cohomology with compact support (\cite{el}) and 
Berthelot-Bloch-Esnault's calculation of   
the slope $< 1$-part of the rigid cohomology 
with compact support via Witt sheaves with compact support (\cite{bbe}). 
\par 
Let $K_0({\mab F}_q)$ be the fraction field of the Witt ring 
${\cal W}({\mab F}_q)$ of ${\mab F}_q$. 
Let $Y$ be a separated scheme of finite type over ${\mab F}_q$ of dimension $d$. 
Let $F_q\col Y\lo Y$ be the $q$-th power Frobenius endomorphism of $Y$. 
The following is \'{E}tess-Le Stum's Lefschetz trace formula proved in 
\cite[Th\'{e}or\`{e}me II]{el}:  
\begin{align*} 
\# Y({\mab F}_{q})=\sum_{i=0}^{2d}(-1)^i{\rm Tr}
(F^*_q\vert H^i_{\rm rig,c}(Y/K_0({\mab F}_q))).  
\tag{2.0.1}\label{ali:trfh} 
\end{align*} 
Let $\{\alpha_{ij}\}_j$ be an eigenvalue of $F^*_q$ on 
$H^i_{\rm rig,c}(Y/K_0({\mab F}_q))$. 
Then 
\begin{align*} 
\# Y({\mab F}_q)=\sum_{i=0}^{2d}(-1)^i(\sum_j{\alpha}_{ij}).  
\tag{2.0.2}\label{ali:trfeh} 
\end{align*} 
By \cite[(3.1.2)]{clpe} (see also \cite[(17.2)]{nh3}), 
\begin{align*} 
\max \{0, i-d\}\leq {\rm ord}_q(\alpha_{ij})\leq \min \{i, d\}.
\tag{2.0.3}\label{ali:trfodh} 
\end{align*}   
Henceforth we consider the equalities (\ref{ali:trfh}) and 
(\ref{ali:trfeh}) as the equalities in the integer ring $\ol{{\cal W}({\mab F}_q)}$
of an algebraic closure of $\ol{K_0({\mab F}_q)}$.  

Let $\kap$ be a perfect field of characteristic $p>0$. 
Let $Y/\kap$ be a separated scheme of finite type. 
Let $K_0$ be the fraction field of the Witt ring ${\cal W}$ of $\kap$. 
Let 
$H^i_{\rm rig,c}(Y/K_0)_{[0,1)}$
be the slope $<1$-part of 
the rigid cohomology $H^i_{\rm rig,c}(Y/K_0)$ 
with compact support 
with respect to the absolute Frobenius endomorphism of $Y$. 
Let 
$H^i_{\rm c}(Y,{\cal W}({\cal O}_{Y,K_0}))$  
be the cohomology of the Witt sheaf 
with compact support of $Y/K_0$ defined by 
Berthelot, Bloch and Esnault in \cite{bbe}: 
\begin{align*} 
H^i_{\rm c}(Y,{\cal W}({\cal O}_{Y,K_0}))
:=H^i(Y,{\cal W}({\cal I}_{K_0})), 
\end{align*} 
where ${\cal W}({\cal I}_{K_0})
:={\rm Ker}({\cal W}({\cal O}_Z)_{K_0}\lo {\cal W}({\cal O}_Z/{\cal I})_{K_0})$  
and ${\cal I}$ is a coherent ideal sheaf of ${\cal O}_Z$ 
for an open immersion 
$Y\os{\sus}{\lo} Z$ into a proper scheme over $\kap$
such that $V({\cal I})=Z\setminus Y$.  
They have proved that $H^i_{\rm c}(Y,{\cal W}({\cal O}_{Y,K_0}))$ 
is independent of the choice of the closed immersion.
By the definition of $H^i_{\rm c}(Y,{\cal W}({\cal O}_{Y,K_0}))$, 
we have the following exact sequence 
\begin{align*} 
&H^i_{\rm c}(Y,{\cal W}({\cal O}_{Y,K_0}))\lo 
H^i(Z,{\cal W}({\cal O}_{Z,K_0}))\lo 
H^i(Z,{\cal W}({\cal O}_Z/{\cal I})_{K_0})\tag{2.0.4}\label{ali:oyk}\\
&\lo H^i_{\rm c}(Y,{\cal W}({\cal O}_{Y,K_0}))\lo \cdots . 
\end{align*} 
By replacing $Z$ by the closure of $Y$ in $Z$, we see that 
\begin{align*} 
H^i_{\rm c}(Y,{\cal W}({\cal O}_{Y,K_0}))=0
\tag{2.0.5}\label{ali:oydk}
\end{align*} 
if $i>d$. 
Then they have proved that 
there exists the following contravariantly functorial isomorphism 
\begin{align*} 
H^i_{\rm rig,c}(Y/K_0)_{[0,1)}\os{\sim}{\lo} 
H^i_{\rm c}(Y,{\cal W}({\cal O}_{Y,K_0}))
\tag{2.0.6}\label{ali:tk0} 
\end{align*} 
(\cite[Theorem (1.1)]{bbe}). 
\par 
Now let us come back to the case $\kap={\mab F}_q$. 
Since 
\begin{align*} 
H^i_{\rm rig,c}(Y/K_0({\mab F}_q))=\sum_{j=0}^{d-1}
H^i_{\rm rig,c}(Y/K_0({\mab F}_q))_{[j,j+1)}\oplus
H^i_{\rm rig,c}(Y/K_0({\mab F}_q))_{[d]},
\end{align*}  
\begin{align*} 
\# Y({\mab F}_{q})&=\sum_{i=0}^{2d}(-1)^i
\sum_{j=0}^{d-1}{\rm Tr}(F^*_q\vert 
H^i_{\rm rig,c}(Y/K_0({\mab F}_q))_{[j,j+1)})
+\sum_{i=d}^{2d}(-1)^i{\rm Tr}(F^*_q\vert 
H^i_{\rm rig,c}(Y/K_0({\mab F}_q))_{[d]}). \tag{2.0.7}\label{ali:trslfh} 
\end{align*} 
Hence we have the following congruence by (\ref{ali:oydk}) and 
(\ref{ali:tk0}): 
\begin{align*} 
\# Y({\mab F}_{q})\equiv 
\sum_{i=0}^{d}(-1)^i{\rm Tr}
(F^*_q\vert H^i_{\rm c}(Y,{\cal W}({\cal O}_{Y,K_0})))~\mod~q 
\tag{2.0.8}\label{ali:yftrys}
\end{align*} 
in $\ol{{\cal W}({\mab F}_q)}$. 

\begin{rema}
In \cite[(1.4)]{bbe}, the following zeta function 
\begin{align*} 
Z^{\cal W}(Y/{\mab F}_q,t):=\prod_{i=0}^{d}
{\rm det}(1-tF^*_q\vert 
H^i_{\rm c}(Y,{\cal W}({\cal O}_Y))_{K_0})^{(-1)^{i+1}}
\end{align*} 
which is equal to the zeta function 
$$Z^{<1}(Y/{\mab F}_q,t)
:=\prod_{{\rm ord}_q(\al_{ij})<1}(1-\al_{ij}t)^{(-1)^{i+1}}$$
has been considered. In this article we do not need this zeta function. 
We do not need Ax's theorem in \cite{ax}
(see \cite[Proposition 6.3]{bbe}) either.  
\end{rema}

\section{Proofs of (\ref{theo:vc}) and (\ref{coro:npf})}\label{sec:psso}
It is well-known that the analogue of 
Kodaira's vanishing theorem for projective smooth schemes  
over a field of characteristic $0$ 
(\cite{ko}) do not hold in characteristic $p>0$ in general (\cite{raykv}). 
However, in \cite{ny},  
we have proved the Kodaira vanishing theorem in characteristic $p>0$ 
under the assumption of the finiteness of the 
Yobuko height. 
To state this theorem precisely, 
we recall the definition of the vertical type for a relative log scheme. 
\par 
\par 
For a commutative monoid $P$ with unit element, 
an ideal is, by definition, a subset $I$ of $P$ such that $PI\subset I$. 
An ideal ${\mathfrak p}$ of $P$ is called a prime ideal if 
$P\setminus {\mathfrak p}$ is a submonoid of $P$ (\cite[(5.1)]{klog2}). 
For a prime ideal ${\mathfrak p}$ of $P$, the height ${\rm ht}({\mathfrak p})$ 
is the maximal length of sequence's ${\mathfrak p}\supsetneq  
{\mathfrak p}_1\supsetneq \cdots \supsetneq {\mathfrak p}_r$ of prime ideals of $P$. 
Let $h\col Q\lo P$ be a morphism of monoids.  
A prime ideal ${\mathfrak p}$ of $P$ is said to be horizontal with respect to $h$ 
if $h(Q)\subset P\setminus {\mathfrak p}$ (\cite[(2.4)]{tsp}). 
\par 
Let $Y\lo Z$ be a morphism of fs(=fine and saturated) log schemes.  
Let $h\col Q\lo P$ be a local chart of $g$ such that $P$ and $Q$ are saturated. 
Set 
$$I:=\{a\in P~\vert~a\in {\mathfrak p}~~\text{for any 
horizontal prime ideal of $P$ of height 1 with respect to}~h\}.$$  
Let ${\cal I}_{Y/Z}$ be the ideal sheaf of $M_Y$ generated by ${\rm Im}(I\lo M_Y)$. 
In  \cite[(2.6)]{tsp} 
Tsuji has proved that ${\cal I}_{Y/Z}$ is independent of the choice of the local chart $h$. 
Let ${\cal I}_{Y/Z}{\cal O}_Y$ be the ideal sheaf of ${\cal O}_Y$ 
generated by the image of ${\cal I}_{Y/Z}$. 

\begin{defi}\label{defi:vt} 
We say that $Y/Z$ is {\it of vertical type} 
if ${\cal I}_{Y/Z}{\cal O}_Y={\cal O}_Y$.  
\end{defi}

In \cite[(1.9)]{ny} we have proved the following theorem: 

\begin{theo}[{\bf Log Kodaira Vanishing theorem}]\label{theo:stk} 
Let $Y\lo s$ be a projective log smooth morphism of Cartier type of 
fs log schemes.  
Assume that $\os{\circ}{Y}$ is of pure dimension $d$.  
Let ${\cal L}$ be an ample invertible sheaf on $\os{\circ}{Y}$. 
 Assume that $h^F(\os{\circ}{Y})<\infty$. 
Then $H^i(Y,{\cal I}_{Y/s}\om_{Y/s}\otimes_{{\cal O}_Y}{\cal L})=0$ for $i>0$. 
In particular, if $Y/s$ is of vertical type, then  
$H^i(Y,\om_{Y/s}\otimes_{{\cal O}_Y}{\cal L})=0$ for $i>0$. 
\end{theo}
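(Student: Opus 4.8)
The plan is to dualize the assertion into a Kodaira--Akizuki--Nakano statement in degree zero and then to run a Witt-vector upgrade of the Mehta--Ramanathan splitting argument, with the finiteness of the Yobuko height $h^F(\os{\circ}{Y})$ standing in for an honest Frobenius splitting. First I would reduce to a vanishing for the inverse of ${\cal L}$. By Tsuji's log Serre duality for the proper log smooth scheme of Cartier type $Y/s$ of pure dimension $d$ (\cite[(2.21)]{tsp}), the sheaf ${\cal I}_{Y/s}\om_{Y/s}$ is dualizing, so there is a perfect pairing identifying $H^{d-i}(Y,{\cal I}_{Y/s}\om_{Y/s}\otimes_{{\cal O}_Y}{\cal L})$ with the $\kap$-dual of $H^{i}(Y,{\cal L}^{-1})$. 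Hence the stated vanishing for all $i>0$ is equivalent to
\begin{align*}
H^{i}(Y,{\cal L}^{-1})=0 \qquad (0\leq i<d).
\end{align*}
The ``in particular'' clause then requires no extra work: if $Y/s$ is of vertical type then ${\cal I}_{Y/s}{\cal O}_Y={\cal O}_Y$ by Definition \ref{defi:vt}, so ${\cal I}_{Y/s}\om_{Y/s}=\om_{Y/s}$ as subsheaves of $\om_{Y/s}$, and the general case specializes to it. Note also that $H^i(Y,-)$ of an ${\cal O}_Y$-module is computed on the underlying scheme $\os{\circ}{Y}$, so the argument below takes place on $\os{\circ}{Y}$, the log structure entering only through the identification of the dualizing sheaf just used.

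Second I would settle the prototype, the height-one (genuinely $F$-split) case, which is exactly the Mehta--Ramanathan mechanism (\cite{mr}). A splitting $\rho\colon F_{Y*}({\cal O}_Y)\lo{\cal O}_Y$ of the Frobenius, tensored with ${\cal L}^{-1}$ and combined with the projection formula $F_{Y*}(F_Y^{*}{\cal L}^{-1})=F_{Y*}({\cal L}^{-p})$, exhibits ${\cal L}^{-1}$ as a direct summand of $F_{Y*}({\cal L}^{-p})$; passing to cohomology and iterating realizes $H^{i}(Y,{\cal L}^{-1})$ as a direct summand of $H^{i}(Y,{\cal L}^{-p^{m}})$ for every $m$. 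By the same log Serre duality, $H^{i}(Y,{\cal L}^{-p^{m}})$ is dual to $H^{d-i}(Y,{\cal I}_{Y/s}\om_{Y/s}\otimes_{{\cal O}_Y}{\cal L}^{p^{m}})$, and for $d-i>0$ this vanishes once $m\gg0$ by Serre vanishing for the coherent sheaf ${\cal I}_{Y/s}\om_{Y/s}$ twisted by a high power of the ample ${\cal L}$. Thus $H^{i}(Y,{\cal L}^{-1})=0$ for $i<d$, which is the height-one case.

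Third, for general $1\leq h^{F}(\os{\circ}{Y})=n<\infty$ the goal is to manufacture the same injection $H^{i}(Y,{\cal L}^{-1})\hookrightarrow H^{i}(Y,{\cal L}^{-p^{N}})$ (for $N$ large) out of the quasi-splitting $\rho\colon F_{Y*}({\cal W}_n({\cal O}_Y))\lo{\cal O}_Y$ with $\rho\circ F$ equal to the projection ${\cal W}_n({\cal O}_Y)\lo{\cal O}_Y$. The plan is to twist this quasi-splitting by (Teichm\"uller lifts of powers of) ${\cal L}^{-1}$ and to descend the Witt tower by means of the exact sequences $0\lo{\cal O}_Y\lo{\cal W}_m({\cal O}_Y)\lo{\cal W}_{m-1}({\cal O}_Y)\lo0$ linking the Witt sheaves through restriction $R$, Frobenius $F$ and Verschiebung $V$; since the height is finite, after finitely many Frobenius iterations one again lands in $H^{i}(Y,{\cal L}^{-p^{N}})$ with $N$ as large as desired, and the Serre-duality-plus-Serre-vanishing endgame of the previous paragraph closes the argument.

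The main obstacle is precisely the twisting in this last step: a line bundle does not tensor naively against the sheaf of Witt vectors ${\cal W}_n({\cal O}_Y)$, which is a sheaf of ${\cal W}_n(\kap)$-algebras rather than an ${\cal O}_Y$-module, so the clean ``direct summand'' statement of the height-one case must be replaced by a Frobenius-linear construction within the (log) de Rham--Witt formalism that still yields injectivity on cohomology. Concretely, I expect the delicate points to be: tracking how the negativity of ${\cal L}^{-1}$ propagates and is amplified through $V$ and the successive Frobenii (each Frobenius multiplying the relevant exponent by $p$, so that the twist is forced deep enough for Serre vanishing); controlling the extensions coming from the sequences displayed above so that the twisted quasi-splitting really survives to the limit; and checking that the Cartier-type hypothesis makes these Witt-level sheaves and their cohomology behave as in the classical case. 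Once the twisted quasi-splitting is shown to induce the desired injection on cohomology, the theorem follows as in the height-one prototype.
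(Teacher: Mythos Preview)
The paper does not prove this theorem: it is quoted from \cite[(1.9)]{ny} (note the sentence ``In \cite[(1.9)]{ny} we have proved the following theorem'' immediately preceding the statement) and is then used as a black box in the proofs of (\ref{theo:vc}) and (\ref{coro:npf}). There is therefore no argument in the present paper to compare your proposal against.

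On the proposal itself: your first two steps are correct and standard. The reduction via Tsuji's log Serre duality to $H^{i}(Y,{\cal L}^{-1})=0$ for $i<d$ is the right move, and the height-one case is precisely the Mehta--Ramanathan argument. Your third step, however, is not a proof but a candid description of where the real work lies. You yourself flag the obstacle: ${\cal W}_n({\cal O}_Y)$ is a sheaf of ${\cal W}_n({\cal O}_Y)$-algebras, not an ${\cal O}_Y$-module, so one cannot simply tensor the quasi-splitting $\rho\colon F_{Y*}({\cal W}_n({\cal O}_Y))\to{\cal O}_Y$ by ${\cal L}^{-1}$ and read off an injection $H^{i}(Y,{\cal L}^{-1})\hookrightarrow H^{i}(Y,{\cal L}^{-p^{N}})$. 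Bridging this gap is the actual content of the result in \cite{ny}: one must build a suitable Witt-level object carrying both an action of ${\cal L}$ (e.g.\ via Teichm\"uller twists, or a filtered sheaf whose graded pieces are the $F_{Y*}^{j}({\cal L}^{-p^{j}})$) and a compatible action of the quasi-splitting, and then verify that the induced map on cohomology is injective. As written, your proposal locates the target and the difficulty but does not supply the construction; it is an outline of a strategy rather than a proof.
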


\par 
Now let us prove (\ref{theo:vc}) and (\ref{coro:npf}) quickly. 
Let the notations be as in (\ref{coro:npf}).  
Since $\om_{X/s}^{-1}$ is ample, 
$H^i(X,{\cal O}_X)=0$ for $i>0$ by (\ref{theo:stk}). 
Hence, by the following exact sequence 
\begin{align*}
0\lo {\cal W}_{n-1}({\cal O}_X)\os{V}{\lo}  {\cal W}_{n}({\cal O}_X)\lo 
{\cal O}_X\lo 0, 
\tag{3.2.1}\label{ali:yexyk} 
\end{align*}
$H^i(X,{\cal W}_n({\cal O}_X))=0$ for $i>0$ and $n>0$. 
Hence 
\begin{align*} 
H^i(X,{\cal W}({\cal O}_X))=
(\vpl_nH^i(X,{\cal W}_n({\cal O}_X)))=0.
\tag{3.2.2}\label{ali:ywyk} 
\end{align*} 
Thus we have proved (\ref{theo:vc}).
\par 
Next let us prove (\ref{coro:npf}).
It suffices to prove (\ref{coro:npf}) for the case $k=1$ 
by considering the base change 
$X\otimes_{{\mab F}_q}{\mab F}_{q^k}$. 
Because $H^0(X,{\cal W}({\cal O}_X))={\cal W}({\mab F}_q)$ and 
$F^*_q={\rm id}$ on $H^0(X,{\cal W}({\cal O}_X))$, 
we obtain the following by (\ref{ali:yftrys}):  
\begin{align*} 
\# \os{\circ}{X}({\mab F}_{q})\equiv 1~\mod~q 
\tag{3.2.3}\label{ali:yftrays}
\end{align*} 
in $\ol{{\cal W}({\mab F}_q)}$. 
This shows  (\ref{coro:npf}).

\begin{rema}\label{rema:fsa}
(1) If $X$ is a Fano variety over ${\mab Q}$, then  
the reduction ${\cal X}\mod p$ of a flat model ${\cal X}$ over 
${\mab Z}$ of $X$ for $p\gg 0$ 
is a Fano variety and $F$-split (\cite[Exercise 1.6. E5]{bm}). 
In particular, $h^F({\cal X}~{\rm mod}~p)<\infty$ for $p\gg 0$.  
\par 
(2) As pointed out in \cite[p.~58]{bm}, a Fano variety 
$X$ is not necessarily $F$-split. 
\par 
The Kodaira vanishing theorem does not hold for 
certain Fano varieties (\cite{lr}, \cite{hl}, \cite{to}). 
By (\ref{theo:stk}) we see that the 
Yobuko heights 
of them are infinity. 
\par 
(3) Let $X/s$ be an SNCL Fano scheme of pure dimension $d$. 
Then any irreducible component of 
$\os{\circ}{X}_i$ of $\os{\circ}{X}$ is Fano. 
Indeed, since $\om^{-1}_{X/s}$ is ample, 
$\om^{-1}_{X/s}\otimes_{{\cal O}_X}{\cal O}_{X_i}=
\om^{-1}_{\os{\circ}{X}_i/\kap}(-\sum_{j}\log D_j)$ is also ample. 
Here $\{D_j\}_j$ is the set of the double varieties in $\os{\circ}{X}_i$. 
Hence $-K_{\os{\circ}{X}_i}-\sum_j D_j$ is ample. 
Consequently $-K_{\os{\circ}{X}_i}$ is ample. 
\end{rema}

\begin{rema}\label{rema:ex}
Let $X/{\mab F}_q$ be a separated scheme of finite type. 
Assume that $X$ is geometrically connected.  
By the argument in this section, it is obvious that, 
if $H^i(X,{\cal O}_X)=0$ $(\forall i>0)$, then 
the congruence (\ref{ali:xfq}) holds for $X/{\mab F}_q$.  
In particular, if $d=2$, if $X/{\mab F}_q$ is smooth, 
if $H^1(X,{\cal O}_X)=0$ and if $H^0(X,\Om^2_{X/s})=0$, 
then the congruence (\ref{ali:xfq}) holds for $X/{\mab F}_q$.  
Such an example can be given by 
a proper smooth Godeaux surface. 
\par 
Other examples are given by proper smooth unirational threefolds  
because $H^i(X,{\cal O}_X)=0$ $(\forall i>0)$ 
by \cite[Introduction, (2.5)]{nyn}. 
\par 
Let $X/s$ be 
an SNCL(=simple normal crossing log) 
classical Enriques surface $X/s$ for $p\not=2$, i.e.,  
$(\Om^{2}_{X/s})^{\otimes 2}$ is trivial and the corresponding \'{e}tale 
covering $X'$ to $\Om^{2}_{X/s}$ is
an SNCL $K3$ surface 
(In \cite[(7.1)]{nlk3} 
we have proved that $H^i(X,{\cal O}_X)=0$ for $i>0$.). 
Hence the congruence (\ref{ali:xfq}) 
also holds for $X/s_{{\mab F}_q}$.   
See (\ref{coro:en}) below for the zeta function of this example. 
By the formulas for the zeta function 
((\ref{eqn:kfend}), (\ref{eqn:kenad})), 
we can easily verify that 
$\# \os{\circ}{X}({\mab F}_q)$ indeed satisfies 
the congruence (\ref{ali:xfq}). 
\par 
More generally, if $H^i(X,{\cal W}({\cal O}_X))_{K_0}=0$ 
$(i>0)$, then the congruence (\ref{ali:xfq}) holds for $X/{\mab F}_q$
by the proof of (\ref{theo:vc}). 
By the main theorem of \cite{beru}, 
one obtains such examples which are special fibers of regular proper flat schemes 
over discrete valuation rings of mixed characteristics 
whose generic fibers are geometrically connected and 
of Hodge type $\geq 1$ in positive degrees. 
See also \cite{e} 
for a generalization of the main theorem in \cite{beru}. 
\end{rema} 

\begin{exem}\label{exem:nNp}
Let $n$ and $N$ be positive integers.   
Set ${\cal X}_1:={\mab P}^N_{{\cal W}(\kap)}$. 
Blow up ${\cal X}_1$ along 
an $\kap$-rational hyperplane of ${\mab P}^N_{\kap}$
and let ${\cal X}_2$ be the resulting scheme. 
Let $\os{\circ}{X}_1$ and $\os{\circ}{X}_n$ be 
the irreducible components of 
the special fiber ${\cal X}_2$. 
Blow up ${\cal X}_2$ again along 
$\os{\circ}{X}_1\cap \os{\circ}{X}_n$ 
and let ${\cal X}_3$ be the resulting scheme. 
Let $\os{\circ}{X}_1$, $\os{\circ}{X}_n$ and $\os{\circ}{X}_{n-1}$ be 
the irreducible components of 
the special fiber ${\cal X}_3$. 
Blow up ${\cal X}_3$ again along 
$\os{\circ}{X}_1\cap \os{\circ}{X}_{n-1}$.  
Continuing this process $(n-1)$-times, we have a projective 
semistable family ${\cal X}_n$ over ${\rm Spec}({\cal W}(\kap))$. 
Let $\os{\circ}{X}_i$ $(1\leq i\leq n)$ be the 
the irreducible components of the special fiber ${\cal X}_n$. 
Let $X$ be the log special fiber of ${\cal X}_n$. 
Then $X$ is a projective SNCL scheme over $s$. 
Let $\os{\circ}{X}{}^{(i)}$ $(i=0,1)$ be the disjoint union of 
$(i+1)$-fold intersections of the irreducible components 
of $\os{\circ}{X}$. 
Then $\os{\circ}{X}{}^{(0)}={\mab P}^N_{\kap}\coprod 
\underset{n-1~{\rm times}}
{\underbrace{({\mab P}^{N-1}_{\kap}\times_{\kap}{\mab P}^1_{\kap})\coprod
\cdots \coprod({\mab P}^{N-1}_{\kap}\times_{\kap}{\mab P}^1_{\kap})}}$ 
and 
$\os{\circ}{X}{}^{(1)}=\underset{n-1~{\rm times}}
{\underbrace{{\mab P}^{N-1}_{\kap}\coprod
\cdots \coprod{\mab P}^{N-1}_{\kap}}}$. 
Using the following spectral sequence 
\begin{align*} 
E_1^{ij}=H^j(X^{(i)},{\cal O}_{X^{(i)}})\Lo 
H^{i+j}(X,{\cal O}_X)
\tag{3.5.1}\label{ali:exi} 
\end{align*} 
and noting that the dual graph of $\os{\circ}{X}$ is a segment, 
we see that $H^i(X,{\cal O}_X)=0$ $(i>0)$.  
If $s=s_{{\mab F}_q}$, then it is easy to check that 
\begin{align*} 
\os{\circ}{X}({\mab F}_q)&=\dfrac{q^{N+1}-1}{q-1}
+(n-1)\dfrac{q^{N}-1}{q-1}\dfrac{q^2-1}{q-1}
-(n-1)\dfrac{q^{N}-1}{q-1}\\
&=\dfrac{q^{N+1}-1}{q-1}+
(n-1)q\dfrac{q^N-1}{q-1}.
\end{align*}  
In particular, $\#\os{\circ}{X}({\mab F}_q)\equiv 1~{\rm mod}~q$.
\par 
The restriction of $\om_{X/s}$ to $\os{\circ}{X}_i$ 
is isomorphic to ${\cal O}_{\os{\circ}{X}_i}(-N)$ for $i=0$, $N$ 
and ${\cal O}_{\os{\circ}{X}_i}(-(N-1))$ for $0< i<N$. 
Hence $\om_{X/s}^{-1}$ is ample if $N\geq 2$. 
Since each $\os{\circ}{X}_i$ is $F$-split 
(the $F$-splitting is given by the ``$p^{-1}$-th power'' of the canonical coordinate 
of $\os{\circ}{X}_i$ (see \cite[(1.1.5)]{bm})
and because 
we have the following exact sequence 
\begin{align*} 
0\lo {\cal O}_X\lo \bigoplus_{i=1}^{N}{\cal O}_{X_i}\lo 
\bigoplus_{i=1}^{N-1}{\cal O}_{X_{i}\cap X_{i+1}},  
\end{align*}
$\os{\circ}{X}$ is $F$-split. 
 
\end{exem}

\section{Proof of (\ref{theo:xfh})}\label{sec:cy}
Let the notations be as in (\ref{theo:xfh}). 
In this section we prove (\ref{theo:xfh}). 
We may assume that $k=1$. 
\par 
Since $H^{d-1}(Y,{\cal O}_Y)=0$,   
we see that 
\begin{align*} 
H^{d-1}(Y,{\cal W}({\cal O}_Y))=
\vpl_nH^i(Y,{\cal W}_n({\cal O}_Y))=0
\tag{4.0.1}\label{ali:yclk} 
\end{align*} 
by the same proof as that of (\ref{theo:vc}). 
Set $\ol{Y}:=Y\otimes_{{\mab F}_q}\ol{\mab F}_{q}$ 
and $e:=\log_pq$. 
By \cite[II (4.3)]{am} the Dieudonn\'{e} module 
$M:=D(\Phi_{Y/\kap})$ of $\Phi_{Y/\kap}$ is equal to 
$H^d(Y,{\cal W}({\cal O}_Y))$.  
Let $h$ be the height of $\Phi_{Y/\kap}$. 
Hence 
$\Phi_{Y/\kap}$ is a 
commutative formal Lie group over $\kap$ of dimension 1 
and  the Dieudonn\'{e} module $M$ 
is a free ${\cal W}$-module of rank $h$ if 
$h< \infty$ (\cite[V (28.3.10)]{ha}). 
Let $F\col M\lo M$ be 
the operator ``$F$'' on the Dieudonn\'{e} module $M$. 
By abuse of notation, we denote 
the induced morphism 
$M_{K_0}\lo M_{K_0}$ by $F$. 
By (\ref{ali:yftrys}) we have the following congruence 
\begin{align*} 
\# Y({\mab F}_{q})\equiv 1+{\rm Tr}(F^e\vert M_{K_0})
~\mod~q 
\tag{4.0.2}\label{ali:yftrcys}
\end{align*} 
in $\ol{{\cal W}({\mab F}_q)}$. 
Set $m:={\rm ord}_p({\rm Tr}(F^e\vert M_{K_0}))$.  If 
$m\leq e={\rm ord}_p(q)$, then  
we obtain the following congruence by (\ref{ali:yftrcys}): 
\begin{align*} 
\# Y({\mab F}_{q})\equiv 1
~\mod~p^{\lceil m \rceil} 
\tag{4.0.3}\label{ali:yftrcemys}
\end{align*} 
in ${\mab Z}$. 

\par 
First we give the proof of (\ref{theo:xfh}) (1).
\medskip 
\parno  
{\bf Proof of (\ref{theo:xfh}) (1).}
\medskip 
\par 
Assume that $h=\infty$. 
Then $D(\Phi_{\ol{Y}/\ol{\mab F}_q})$ is ${\cal W}(\ol{\mab F}_q)$-torsion. 
By \cite[II (4.3)]{am}, 
$H^d(\ol{Y},{\cal W}({\cal O}_{\ol{Y}}))_{K_0}=
D(\Phi_{\ol{Y}/\ol{\mab F}_q})_{K_0}=0$. 
By \cite[I (1.9.2)]{idw}, ${\cal W}({\cal O}_{\ol{Y}})
={\cal W}({\cal O}_Y)\otimes_{{\cal W}({\mab F}_q)}{\cal W}(\ol{\mab F}_q)$. 
Since $\os{\circ}{Y}$ is separated, 
we obtain the following equality 
$H^d(Y,{\cal W}({\cal O}_{\ol{Y}}))=
H^d(Y,{\cal W}({\cal O}_Y))\otimes_{{\cal W}({\mab F}_q)}{\cal W}(\ol{\mab F}_q)$ by using \v{C}ech cohomologies. 
Hence 
$$H^d(Y,{\cal W}({\cal O}_Y))_{K_0({\mab F}_q)}=0.$$ 
(To obtain this vanishing, one may use the fact that 
the Dieudonn\'{e} module commutes with base change (cf.~the description of 
$D(\Phi_{Y/{\mab F}_q})$ in \cite[p.~309]{mub}.))
By (\ref{ali:yftrys})
this means the congruence (\ref{eqn:kfdintd}). 
\par 
Now assume that  $h<\infty$. 
Next we give the proof  (\ref{theo:xfh}) (2).
\medskip 
\parno  
{\bf Proof of (\ref{theo:xfh}) (2).}
\medskip
\par
\par 
Let us recall the following well-known observation  
(\cite[Exercise 6.13]{li}): 

\begin{prop}\label{prop:eobs}
Let $G$ be 
a commutative formal Lie group of dimension $1$ 
over a perfect field $\kap$ of characteristic $p>0$. 
Assume that the height $h$ of $G$ is finite. 
Then the slopes of the Dieudonn\'{e} module of $D(G)$ 
is $1-h^{-1}$. 
\end{prop}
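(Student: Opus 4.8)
Write $M := D(G)$, which, as recalled above, is a free ${\cal W}$-module of rank $h$; it carries the $\sigma$-semilinear Frobenius $F$ and the $\sigma^{-1}$-semilinear Verschiebung $V$ with $FV = VF = p$, and both are injective since $M$ is $p$-torsion free. That $G$ has dimension $1$ means $\dim_\kappa(M/VM) = 1$, and, $G$ being a formal Lie group of finite height, $V$ acts topologically nilpotently on $M$. As a first step — also a check that $1-h^{-1}$ is the right value — I would compute the total slope: from $pM = FVM \subseteq FM$, the injectivity of $F$ (so that $FM/FVM \cong M/VM$ as $\sigma$-semilinear ${\cal W}$-modules, hence of equal length), and $\dim_\kappa(M/VM) = 1$, one gets $\mathrm{length}_{\cal W}(M/FM) = h-1$; thus the $h$ Newton slopes of the $F$-isocrystal $M_{K_0}$ sum to $h-1$, i.e.\ have average $1 - h^{-1}$. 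The real content is that the Newton polygon is a straight line.

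For that, my main line of attack is to reduce to $\kappa = \overline{\kappa}$ — both the height of $G$ and the slopes of $M_{K_0}$ are insensitive to extending the base field to the algebraic closure — and then invoke the Dieudonn\'e--Manin classification: a one-dimensional formal group of height $h$ over an algebraically closed field forms a single isogeny class, with standard representative the group whose Dieudonn\'e module is $M_0 = \bigoplus_{i=0}^{h-1}{\cal W}e_i$, $Ve_i = e_{i+1}$ $(0\le i \le h-2)$, $Ve_{h-1} = pe_0$. On $M_0$ one computes directly $Fe_0 = e_{h-1}$ and $Fe_j = pe_{j-1}$ $(1\le j\le h-1)$, hence $F^h = p^{h-1}\cdot\mathrm{id}$ on $M_0$; so $M_0$ is isoclinic of slope $(h-1)/h = 1 - h^{-1}$, and since slopes are isogeny invariants the same holds for $M_{K_0}$.

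Alternatively, one can argue without the classification: pick $e\in M$ lifting a generator of $M/VM \cong \kappa$; topological nilpotence of $V$ and Nakayama's lemma show $(e, Ve,\dots, V^{h-1}e)$ is a ${\cal W}$-basis of $M$. Writing $V^he = \sum_{i=0}^{h-1}c_i V^ie$, the isomorphism $M/VM \cong {\cal W}/(c_0)$ forces $\mathrm{ord}_p(c_0) = 1$, and topological nilpotence of $V$ forces $\mathrm{ord}_p(c_i)\ge 1$ for $1\le i\le h-1$ (otherwise the characteristic polynomial $t^h - c_{h-1}t^{h-1} - \cdots - c_0$ of the companion matrix of $V$ would have a root of valuation $0$). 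In this basis $FV^ie = pV^{i-1}e$ $(1\le i\le h-1)$ and, applying $F$ to the relation for $V^he$, the coefficient of $V^{h-1}e$ in $Fe$ is a unit; a short computation then gives $F^hM = p^{h-1}M$, so $p^{-(h-1)}F^h$ is an injective, hence bijective, $\sigma^h$-semilinear endomorphism of $M$, and every Newton slope $\lambda$ of $F$ satisfies $h\lambda = h-1$, i.e.\ $\lambda = 1-h^{-1}$.

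The step I expect to be the crux is precisely the passage from ``average slope $1 - h^{-1}$'' to ``isoclinic of slope $1-h^{-1}$'': the numerical inputs ($\mathrm{length}_{\cal W}(M/FM) = h-1$, equivalently $\mathrm{ord}_p\det F = h-1$) are by themselves compatible with a non-straight Newton polygon (e.g.\ slopes $0$ and $1$ when $h = 2$), so one genuinely has to use that $G$ is a \emph{formal} group — topological nilpotence of $V$, equivalently the absence of a unit-root part of $M_{K_0}$ — whether through the Dieudonn\'e--Manin classification or through the explicit computation above.
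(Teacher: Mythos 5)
Your main argument is correct and is essentially the paper's own proof: reduce to $\kappa$ algebraically closed, invoke the uniqueness (Dieudonn\'e--Manin/Hazewinkel) of the one-dimensional formal group of height $h$, so that $D(G)$ is the standard module $D(\kappa)/D(\kappa)(F-V^{h-1})$, and read off the slope there --- your honest computation $F^h=p^{h-1}$ on the standard basis is just a cleaner substitute for the paper's characteristic-polynomial trick ``as if $F$ were ${\cal W}$-linear''. The average-slope check and the sketched classification-free variant are extras, not needed for the verdict.
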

\begin{proof} 
Let $D(\kap)$ be the Cartier-Dieudonn\'{e} algebra over $\kap$. 
We may assume that $\kap$ is algebraically closed. 
In this case, the height is the only invariant which determines the 
isomorphism class of a 1-dimensional commutative formal group law 
over $\kap$ 
(\cite[(19.4.1)]{ha}). 
Hence  
$D(G)\simeq D(\kap)/D(\kap)(F-V^{h-1})$ (\cite[p.~266]{vgk}). 
Express $F(1,V, \cdots, V^{h-1})=(1,V, \cdots, V^{h-1})A$, 
where $A\in M_h({\cal W})$ (as if $F$ were ${\cal W}$-linear). 
Then ${\rm det}(tI-A)=t^h-p^{h-1}$. 
Hence the slopes of $D(G)$ is ${\rm ord}_p((p^{h-1})^{h^{-1}})=1-h^{-1}$.
\end{proof} 

\par 
By (\ref{prop:eobs}) and (\ref{ali:yftrcemys}),  
we obtain the following congruence 
\begin{align*} 
\# Y({\mab F}_{q})\equiv 1
~\mod~p^{\lceil e(1-h^{-1})\rceil} 
\tag{4.1.1}\label{ali:yftzys}
\end{align*} 
in ${\mab Z}$.

Lastly we give the proof of (\ref{theo:xfh}) (3) in the following.
\medskip 
\parno  
{\bf Proof of (\ref{theo:xfh}) (3).}
\medskip 
\par 
Let $\kap$ be a perfect field of characteristic $p>0$. 
Let $Y$ be a proper scheme over $\kap$ of pure dimension $d\geq 1$.
$($We do not assume that $Y$ is smooth over $\kap$.$)$ 
Assume that $H^d(Y,{\cal O}_Y)\simeq \kap$ and 
that $H^{d-1}(Y,{\cal O}_Y)=0$ if $d\geq 2$.  
Then the following morphism 
\begin{align*} 
H^d(Y,{\cal W}({\cal O}_Y))/p\lo H^d(Y,{\cal O}_Y)
\end{align*} 
is an isomorphism. 
Indeed, this is surjective and 
\begin{align*} 
{\rm dim}_{\kap}(H^d(Y,{\cal W}({\cal O}_Y))/p)=
{\rm dim}_{\kap}(M/p)=
1={\rm dim}_{\kap}H^d(Y,{\cal O}_Y).
\end{align*}  
Since $h=1$, $F$ on 
$H^d(Y,{\cal W}({\cal O}_Y))\otimes_{{\cal W}(\kap)}{\cal W}(\ol{\kap})$ 
is an isomorphism, Hence
$F\col H^d(Y,{\cal O}_Y)\lo H^d(Y,{\cal O}_Y)$ is an isomorphism. 
Hence $\# Y({\mab F}_{q})\equiv 1+\al~{\rm mod}~q$ 
for a unit $\al \in {\cal W}({\mab F}_q)^*$.  
Now  (\ref{theo:xfh}) (3) follows.  
\parno 

\begin{rema}\label{rema:fe}
(1) If $H^i(Y,{\cal O}_Y)=0$ for $0<i<d-1$ 
(this is stronger than (c) in the Introduction),  
then 
(\ref{theo:xfh}) (3) also follows from Fulton's trace formula (\cite{ftf}): 
\begin{align*} 
\# Y({\mab F}_{q})~{\rm mod}~p\equiv 
\sum_{i=0}^{d}(-1)^i{\rm Tr}(F^*_q\vert H^i(Y,{\cal O}_Y))\in {\mab F}_q 
\end{align*} 
(cf.~\cite[Proposition 5.6]{bth}).  
\par 
(2) 
Let $X/s$ be a log Calabi-Yau scheme. 
In \cite[(10.1)]{ny} we have proved a fundamental equality 
$h^F(X/\kap)=h(X/\kap)$. 
Hence $X$ is quasi-$F$-split (resp.~$F$-split) if and only if 
$h(X/\kap)<\infty$ (resp.~$h(X/\kap)=1$). 
\end{rema}

Though the following corollary immediately follows from \cite[(1.6)]{bbe}, 
we state it for the convenience of our remembrance. 

\begin{coro}\label{coro:ht} 
Let  $Y$ be as in {\rm (\ref{theo:xfh})}. 
Let $f\col Z_1\lo Z_2$ be a morphism of proper schemes over ${\mab F}_q$. 
Assume that $Z_1$ or $Z_2$ is isomorphic to $Y$ over ${\mab F}_q$. 
Assume that $\Phi_{Z_i/{\mab F}_q}$ $(i=1,2)$ is representable. 
If the pull-back $f^*\col H^i(Z_2,{\cal O}_{Z_2})\lo H^i(Z_1,{\cal O}_{Z_1})$ 
is an isomorphism, then the natural morphism 
$\Phi_{Z_2/{\mab F}_q}\lo \Phi_{Z_1/{\mab F}_q}$ is an isomorphism. 
In particular, $h(Z_1/{\mab F}_q)=h(Z_2/{\mab F}_q)$ 
and {\rm (\ref{theo:xfh})} for $\# Z_i({\mab F}_q)$ holds. 
\end{coro}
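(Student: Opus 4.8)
The plan is to translate the assertion about formal groups into one about their Dieudonn\'{e} modules, where it becomes the Witt-vector counterpart of the hypothesis on coherent cohomology. By \cite[II (4.3)]{am} there are identifications $D(\Phi_{Z_i/{\mab F}_q})=H^d(Z_i,{\cal W}({\cal O}_{Z_i}))$ $(i=1,2)$ which are functorial in $Z_i$, so that the morphism of Dieudonn\'{e} modules induced by $f$ is precisely $f^*\col H^d(Z_2,{\cal W}({\cal O}_{Z_2}))\lo H^d(Z_1,{\cal W}({\cal O}_{Z_1}))$. Since $\Phi_{Z_1/{\mab F}_q}$ and $\Phi_{Z_2/{\mab F}_q}$ are representable by hypothesis and the Dieudonn\'{e} module functor is an equivalence of categories between formal Lie groups over ${\mab F}_q$ and the corresponding category of modules over the Dieudonn\'{e} ring (\cite{ha}), it suffices to prove that this $f^*$ is an isomorphism; the equality $h(Z_1/{\mab F}_q)=h(Z_2/{\mab F}_q)$ will then be automatic, the height being an isomorphism invariant of a formal Lie group.

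The next step is to propagate the hypothesis from ${\cal O}$-cohomology up the Witt tower. Applying $f^*$ to the long exact cohomology sequences attached to the short exact sequences $0\lo {\cal W}_{n-1}({\cal O}_{Z_i})\os{V}{\lo}{\cal W}_n({\cal O}_{Z_i})\lo {\cal O}_{Z_i}\lo 0$ of sheaves of abelian groups (as in (\ref{ali:yexyk})) and using the five lemma, an induction on $n$ starting from the given case $n=1$ shows that $f^*\col H^i(Z_2,{\cal W}_n({\cal O}_{Z_2}))\lo H^i(Z_1,{\cal W}_n({\cal O}_{Z_1}))$ is an isomorphism for every $i$ and every $n\geq 1$. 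Since ${\cal W}_n({\cal O}_{Z_i})$ is coherent and $Z_i$ is proper over ${\mab F}_q$, each $H^i(Z_i,{\cal W}_n({\cal O}_{Z_i}))$ is a finite group, so the inverse systems in $n$ satisfy the Mittag-Leffler condition and $\vpl_n$ is exact on them; hence $H^i(Z_i,{\cal W}({\cal O}_{Z_i}))=\vpl_n H^i(Z_i,{\cal W}_n({\cal O}_{Z_i}))$ and $f^*$ is an isomorphism on these limits, in particular in degree $d$. Together with the first paragraph this produces the desired isomorphism $\Phi_{Z_2/{\mab F}_q}\lo \Phi_{Z_1/{\mab F}_q}$.

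For the last sentence I would check that $Z_1$ and $Z_2$ each satisfy the hypotheses (a)--(d) of (\ref{theo:xfh}): whichever of the two is isomorphic to $Y$ does, and for the other these conditions transport along the isomorphisms $f^*$ established above --- in degree $0$ for (a), on ${\cal W}({\cal O})_{K_0}$ in degrees $0<i<d-1$ for (b), in degree $d-1$ for (c), in degree $d$ for (d) --- together with Grothendieck vanishing of coherent cohomology in degrees $>d$. Then (\ref{theo:xfh}) applies verbatim to $\#Z_i({\mab F}_q)$. Alternatively, as already remarked, the whole corollary drops out of \cite[(1.6)]{bbe}.

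I expect the only point genuinely requiring care to be the compatibility in the first paragraph: one must be sure that, under the Artin-Mazur identification, the variance of the Dieudonn\'{e} functor is such that an isomorphism of Dieudonn\'{e} modules really forces an isomorphism of the representing formal Lie groups, including in the case of infinite height (for instance when some $\Phi_{Z_i/{\mab F}_q}\simeq \wh{{\mab G}}_a$, whose Dieudonn\'{e} module is ${\mab F}_q$-torsion). Everything else is a routine diagram chase and inverse-limit argument, which is why the corollary is essentially immediate.
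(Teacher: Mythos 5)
Your argument is correct and follows essentially the same route as the paper: it lifts the hypothesis to an isomorphism $f^*\col H^i(Z_2,{\cal W}({\cal O}_{Z_2}))\os{\sim}{\lo} H^i(Z_1,{\cal W}({\cal O}_{Z_1}))$, identifies these with the Dieudonn\'{e} modules via Artin--Mazur, and concludes by the Cartier/Dieudonn\'{e} equivalence that $\Phi_{Z_2/{\mab F}_q}\lo \Phi_{Z_1/{\mab F}_q}$ is an isomorphism, hence the heights agree and (\ref{theo:xfh}) applies. You merely spell out steps the paper leaves implicit (the Witt-tower induction with the five lemma, the Mittag--Leffler passage to the inverse limit, and the transport of conditions (a)--(d) to the other $Z_i$), which is fine.
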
 
\begin{proof} 
By the assumption, we have an isomorphism 
$f^*\col H^i(Z_2,{\cal W}({\cal O}_{Z_2}))\os{\sim}{\lo} 
H^i(Z_1,{\cal W}({\cal O}_{Z_1}))$. 
Hence the natural morphism 
$D(\Phi_{Z_2/{\mab F}_q})\lo D(\Phi_{Z_1/{\mab F}_q})$ 
is an isomorphism. 
By Cartier theory, the natural morphism 
$\Phi_{Z_2/{\mab F}_q}\lo \Phi_{Z_1/{\mab F}_q}$ is an isomorphism. 
This implies that $h(Z_2/{\mab F}_q)=h(Z_2/{\mab F}_q)$ 
and {\rm (\ref{theo:xfh})} for $\# Z_i({\mab F}_q)$ holds. 
\end{proof} 

The following corollary immediately follows from the proof of 
\cite[(6.12)]{bbe}. 

\begin{coro}\label{coro:eo}
Let $Y$ be as in {\rm (\ref{theo:xfh})}. 
Let $G$ be a finite group acting on $Y/{\mab F}_q$ such that 
each orbit of $G$ is contained in an affine open subscheme of $Y$. 
If $\# G$ is prime to $p$ and the induced action on $H^d(Y,{\cal O}_Y)$ 
is trivial, then $h((Y/G)/{\mab F}_q)=h(Y/{\mab F}_q)$ and {\rm (\ref{theo:xfh})} for 
$\# (Y/G)({\mab F}_q)$ holds.  
\end{coro}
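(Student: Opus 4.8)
The plan is to reduce everything to {\rm (\ref{theo:xfh})} applied to the quotient $Y/G$: the real content is to produce $Y/G$ as a scheme, to check that it again satisfies conditions (a), (b), (c), (d) with $\kappa={\mab F}_q$, and to identify its Artin--Mazur formal group, hence its height, with that of $Y$.

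First I would form the quotient. Since every orbit of $G$ lies in an affine open subscheme, the quotient scheme $\pi\col Y\lo Y/G$ exists, $\pi$ is finite and surjective, ${\cal O}_{Y/G}=(\pi_*{\cal O}_Y)^G$, and $Y/G$ is again a proper ${\mab F}_q$-scheme of pure dimension $d$. Because $\#G$ is prime to $p$, it is a unit in every ${\cal W}_n({\mab F}_q)$, so on $G$-equivariant sheaves of ${\cal W}_n({\cal O}_{Y/G})$-modules the functor $(-)^G$ is exact and commutes with cohomology (apply the idempotent $(\#G)^{-1}\sum_{g\in G}g$); moreover, for any ring $B$ with $G$-action one has ${\cal W}_n(B)^G={\cal W}_n(B^G)$ coordinatewise, so $(\pi_*{\cal W}_n({\cal O}_Y))^G={\cal W}_n({\cal O}_{Y/G})$. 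Since $\pi$ is affine, these facts give
\[
H^i(Y/G,{\cal O}_{Y/G})=H^i(Y,{\cal O}_Y)^G,\qquad
H^i(Y/G,{\cal W}({\cal O}_{Y/G}))=H^i(Y,{\cal W}({\cal O}_Y))^G
\]
for all $i$ (the second is exactly what the proof of \cite[(6.12)]{bbe} provides). Now the $G$-action on $H^0(Y,{\cal O}_Y)={\mab F}_q$ is trivial, the action on $H^d(Y,{\cal O}_Y)\simeq{\mab F}_q$ is trivial by hypothesis, $H^{d-1}(Y,{\cal O}_Y)=0$ for $d\geq 2$, and $H^i(Y,{\cal W}({\cal O}_Y))_{K_0}=0$ for $0<i<d-1$; tensoring the second displayed isomorphism with $K_0$ and using these, $Y/G$ satisfies (a), (b), (c), (d), so it is again ``as in {\rm (\ref{theo:xfh})}''.

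Finally I would compare heights. By \cite[II (4.3)]{am}, $D(\Phi_{(Y/G)/{\mab F}_q})=H^d(Y/G,{\cal W}({\cal O}_{Y/G}))=M^G$ where $M:=D(\Phi_{Y/{\mab F}_q})=H^d(Y,{\cal W}({\cal O}_Y))$. If $h(Y/{\mab F}_q)=\infty$, then $M_{K_0}=0$ by the proof of {\rm (\ref{theo:xfh})} (1), hence $(M^G)_{K_0}=(M_{K_0})^G=0$ and $h((Y/G)/{\mab F}_q)=\infty$. If $h(Y/{\mab F}_q)=:h<\infty$, then $M$ is free of rank $h$ over ${\cal W}$ by \cite[V (28.3.10)]{ha}, and the ${\cal W}$-linear $G$-action on $M$ reduces modulo $p$ to the trivial action on $M/p=H^d(Y,{\cal O}_Y)$; since an automorphism of a free ${\cal W}$-module of finite order prime to $p$ that is trivial modulo $p$ is the identity, $G$ acts trivially on $M$, whence $M^G=M$ and $h((Y/G)/{\mab F}_q)=h$. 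In either case $h((Y/G)/{\mab F}_q)=h(Y/{\mab F}_q)$, and applying {\rm (\ref{theo:xfh})} to $Y/G$ then gives the asserted congruences for $\#(Y/G)({\mab F}_{q^k})$. The points requiring care are precisely the two transfer isomorphisms above and the triviality of the $G$-action on the Dieudonn\'e module; all three use $\#G$ being prime to $p$, and the last also uses the hypothesis that $G$ acts trivially on $H^d(Y,{\cal O}_Y)$.
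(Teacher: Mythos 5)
Most of your reduction is sound, and it in fact supplies the details that the paper leaves entirely to its citation of the proof of \cite[(6.12)]{bbe}: the construction of $Y/G$, the identifications $H^i(Y/G,{\cal W}_n({\cal O}_{Y/G}))\simeq H^i(Y,{\cal W}_n({\cal O}_Y))^G$ via the idempotent $(\#G)^{-1}\sum_{g\in G}g$, the verification of (a)--(d) for $Y/G$, and the infinite-height case are all fine. The gap is in your finite-height case, at the sentence asserting that the $G$-action ``reduces modulo $p$ to the trivial action on $M/p=H^d(Y,{\cal O}_Y)$''. The identification $M/p=H^d(Y,{\cal O}_Y)$ is false once $h\geq 2$: the exact sequence (\ref{ali:yexyk}) only gives $H^d(Y,{\cal O}_Y)\simeq M/VM$, whereas by (\ref{prop:nnz}) (2) one has $M/p\simeq H^d(Y,{\cal W}_h({\cal O}_Y))$, which is $h$-dimensional over $\kap$; the natural surjection $M/p\lo H^d(Y,{\cal O}_Y)$ has nonzero kernel $VM/pM$. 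So the hypothesis that $G$ acts trivially on $H^d(Y,{\cal O}_Y)$ only yields triviality on $M/VM$, not on $M/p$, and your lemma (finite order prime to $p$ and congruent to the identity mod $p$ implies identity) cannot be invoked as stated.

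The conclusion can be repaired in two ways. Either replace the $p$-adic congruence filtration by the $V$-adic one: each $g\in G$ commutes with $V$ and acts trivially on $M/VM$, hence trivially on every graded piece $V^iM/V^{i+1}M$; since $V$ is topologically nilpotent on $M$ (so $\bigcap_iV^iM=0$), the same ``finite order prime to $p$'' computation applied to $g-1$, which strictly raises the $V$-filtration, forces $g={\rm id}$ and $M^G=M$. Or, more cheaply, avoid proving $M^G=M$ altogether: $M^G=D(\Phi_{(Y/G)/{\mab F}_q})$ is a nonzero, torsion-free, $F$- and $V$-stable ${\cal W}$-submodule of $M$ (nonzero because it surjects onto $H^d(Y/G,{\cal O}_{Y/G})\simeq{\mab F}_q$), so $(M^G)_{K_0}$ is a nonzero sub-$F$-isocrystal of $M_{K_0}$, which is isoclinic of slope $1-h^{-1}$ by (\ref{prop:eobs}); since $\Phi_{(Y/G)/{\mab F}_q}$ is a one-dimensional formal Lie group, its height must then equal $h$. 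With either repair your argument becomes a complete proof; the paper itself offers no argument beyond the reference to \cite{bbe}, so apart from this one step your write-up is a legitimate filling-in rather than a divergence.
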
 


\begin{exem}\label{exem:gnt} 
We give examples of trivial logarithmic cases. 
\par 
(1) Let $E/{\mab F}_p$ be an elliptic curve. 
It is very well-known that 
$E/{\mab F}_p$ is supersingular if and only if 
$\# E({\mab F}_p)=p+1$ if $p\geq 5$ (\cite[V Exercises 5.9]{sil}). 
As observed in \cite[Example 5.11]{bth}, 
this also follows from the purity of 
the weight for an elliptic curve over ${\mab F}_p$: 
$\vert \# E({\mab F}_p)-(p+1)\vert \leq 2\sqrt{p}$ 
and Fulton's trace formula.   
In fact, we can say more in (\ref{prop:cg}) below. 
\par 
(2) 
Let $d\geq 3$ be a positive integer such that $d\not\equiv 0~{\rm mod}~p$. 
Consider a smooth Calabi-Yau variety 
${\cal X}/{\cal W}({\mab F}_q)$ 
in ${\mab P}^{d-1}_{{\cal W}({\mab F}_q)}$
defined by the following equation: 
\begin{align*} 
a_0T^d_0+\cdots +a_{d-1}T^d_{d-1}=0
\quad (a_0, \ldots,a_{d-1}\in {\cal W}({\mab F}_q)^*). 
\end{align*} 
Set $a:=a_0\cdots a_{d-1}\in {\cal W}({\mab F}_q)$. 
Let $X/{\mab F}_q$ be the reduction mod~$p$ of 
${\cal X}/{\cal W}({\mab F}_q)$. 
By \cite[Theorem 1]{stfg} (see also [loc.~cit., Example 4.13]), 
the logarithm $l(t)$ of $\Phi_{{\cal X}/{\cal W}({\mab F}_q)}$ is given by the following formula:
\begin{align*} 
l(t)=\sum_{m=0}^{\infty}a^m\dfrac{(md)!}{(m!)^d}\dfrac{t^{md+1}}{md+1}. 
\end{align*} 
\par 
(a) 
If $p\equiv 1~{\rm mod}~d$, 
then 
$$pl(t)=pt+p\sum_{i=2}^{p-1}c_it^i
+({\rm unit})t^p+({\rm higher~terms~than~}t^p)$$ 
for some $c_i\in {\cal W}({\mab F}_q)$ 
in ${\cal W}({\mab F}_q)[[t]]$. 
Hence $l^{-1}(pl(t))~{\rm mod}~p\equiv t^p+\cdots$ and 
the height of $\Phi_{X/{\mab F}_q}$ is equal to $1$. 
\par 
(b) If  $p\not\equiv 1~{\rm mod}~d$, then $pl(t)\in p{\cal W}({\mab F}_q)[[t]]$. 
Hence the height of $\Phi_{X/{\mab F}_q}$ is equal to $\infty$. 
\par 
(a) and (b) above are much easier and much more direct proofs 
of \cite[Theorem 5.1]{vgkht}.
\par 
(3) Especially consider the case $N=3$ in (2) and let $X/{\mab F}_p$ 
be a closed subscheme of ${\mab P}^N_{{\mab F}_p}$
defined by the following equation: 
\begin{align*} 
T^4_0+T^4_1+T^4_2+T^4_3=0. 
\end{align*} 
\par 
(a) If $p=3$, then $\# X({\mab F}_3)\equiv 1~{\rm mod}~3$ by (\ref{theo:xfh}) (1). 
In fact, it is easy to see that 
$\# X({\mab F}_3)=4=1+3^2-3\times 2$. 
(This $X$ and $X$ in (c) are Tate's examples in \cite{ta} 
of a supersingular $K3$-surface 
(in the sense of T.~Shioda)  
over ${\mab F}_3$ and ${\mab F}_7$), respectively.) 
\par 
(b) If $p=5$, then $\# X({\mab F}_5)\not\equiv 1~{\rm mod}~5$
by (\ref{theo:xfh}) (3). 
In fact, it is easy to see that $\# X({\mab F}_5)=0$. 
More generally, for a power $q$ of a prime number $p$, 
let $X_q$ be a closed subscheme 
of ${\mab P}^{q-1}_{{\mab F}_q}$ defined by the following equation: 
\begin{align*} 
a_0T^{q-1}_0+\cdots +a_{q-2}T^{q-1}_{q-2}=0
\quad (a_0, \ldots,a_{q-2} \in {\mab F}_q^*,~(a_0, \ldots,a_{q-2})\not=(0,\ldots,0)),   
\end{align*} 
where $a_0, \ldots,a_{q-2}$ satisfying the following condition: 
for any nonempty set $I$ of $\{0, \ldots, q-2\}$,  
$\sum_{j\in I}a_j\not=0$ in ${\mab F}_q$. 
Then $\# X_q({\mab F}_q)=0$. 
\par 
(c) If $p=7$, then $\# X({\mab F}_7)\equiv 1~{\rm mod}~7$
by (\ref{theo:xfh}) (1). 
In fact, one can check that 
$\# X({\mab F}_7)=64=1+7^2+7\times 2$.  
In general, 
if $\Phi(X/{\mab F}_q)$ is supersingular, 
then  $\# X({\mab F}_q)=1+q^2+q\al$ 
for some $\vert \al \vert \leq 22$ 
by the purity of the weight and by $b_2(\ol{X})= 22$. 
Here $b_2(\ol{X}/\ol{\mab F}_q)$ 
is the second Betti number of $\ol{X}/\ol{\mab F}_q$. 
(We do not know an example of the big $\vert a \vert$.)  
\par 
(4) See \cite[(4.8)]{yy} for explicit examples of $X/{\mab F}_q$'s  
such that $h(\Phi_{X/{\mab F}_q})=2$. 
See also \cite[\S6]{vgkht}. 
\end{exem} 



\begin{exem} 
(1) Let $n$ be a positive integer. Let $X$ be an $n$-gon over ${\mab F}_q$.   
Then, by \cite[(6.7) (1)]{nlfc}, 
$X$ is $F$-split. In particular, $h^F(X)=h(X/{\mab F}_q)=1$. 
Then, by (\ref{theo:xfh}) (3), 
$\# X({\mab F}_q)\not\equiv 1~{\rm mod}~p$. 
In fact, it is easy to see that 
$\# X({\mab F}_q)=n(q+1)-n=q$. 
Compare this example with the example in (\ref{exem:nNp}). 
\par 
(2) Let $\kap$ be a perfect field of characteristic $p>0$. 
Let $X$ be an SNCL(=simple normal crossing log) $K3$-surface over $\kap$, 
that is, an SNCL Calabi-Yau variety of dimension $2$ (\cite{nlk3}). 
In \cite[(6.7) (2)]{nlfc} we have proved the following: 
\par 
(a) If $\os{\circ}{X}$ is of Type II {\rm (\cite[\S3]{nlk3})}, 
then $X$ is $F$-split if and only if the isomorphic double elliptic curve is ordinary. 
In this case, $h(X/\kap)=1$. If this is not the case,  $h(X/\kap)=2$. 
\par 
(b) If $\os{\circ}{X}$ is of Type III {\rm ([loc.~cit.])}, 
then $X$ is $F$-split and $h(X/\kap)=1$.
\par 
See (\ref{coro:f}) below for the zeta function of these examples. 
By the formulas for the zeta function ((\ref{eqn:kfmdad}) and (\ref{eqn:kfdqad})), 
we can easily verify that $\# \os{\circ}{X}({\mab F}_q)$ indeed satisfies 
the congruences (\ref{eqn:kfdbntd}) and (\ref{eqn:kfd2td}). 
\end{exem} 

\begin{rema} 
(1) Let $X/{\mab F}_q$ and $X^*/{\mab F}_q$ be a strong mirror Calabi-Yau pair  
in the sense of Wan (\cite{wama}), 
whose strict definition has not been given. 
Then he conjectures that 
$\# X({\mab F}_q)\equiv \# X^*({\mab F}_q)~{\rm mod}~q$ (\cite[(1.3)]{wama}). 
Hence the following question seems natural: 
does the equality 
$h(\Phi_{X/{\mab F}_q})=h(\Phi_{X^*/{\mab F}_q})$ hold? 
If his conjecture is true, 
only one of $h(\Phi_{X/{\mab F}_q})$ and $h(\Phi_{X^*/{\mab F}_q})$ 
cannot be 1 by (\ref{theo:xfh}). 
This is compatible with Wan's generically ordinary conjecture 
in [loc.~cit., (8.3)]. 
\par 
(2)  If $X$ satisfies the conditions (a), (c) and (d) in the Introduction 
and if $X$ is a special fiber of a regular proper flat scheme 
over a discrete valuation ring of mixed characteristics 
whose generic fibers are geometrically connected 
and of Hodge type $\geq 1$ in degrees in $[1,d-2]$, 
then we see that $X$ satisfies the condition (b) by \cite{beru}. 
\end{rema}

We conclude this section by generalizing (\ref{exem:gnt}) (1) 
by using (\ref{theo:xfh}) and 
raise an important question: 

\begin{prop}\label{prop:cg}
Let $C$ be a proper smooth curve over ${\mab F}_{q}$ 
such that $H^0(C,{\cal O}_C)\simeq {\mab F}_{q}\simeq 
H^1(C,{\cal O}_C)$. Recall that $e=\log_pq$. 
Then the following hold$:$
\par
$(1)$ Assume that $e$ is odd and $p\geq 5$.  
Then $h_{C/{\mab F}_q}=2$ if and only if 
$\#C({\mab F}_q)=1+q$. 
\par
$(2)$ Assume that $e$ is odd and $p= 3$ or $2$. 
Then $h_{C/{\mab F}_q}=2$ if and only if 
$\#C({\mab F}_q)=1+q$ or $1+ q\pm p^{\frac{e+1}{2}}$. 
\par
$(3)$ Assume that $e$ is even. 
Then $h_{C/{\mab F}_q}=2$ if and only if 
$\#C({\mab F}_q)=1+q+\al p^{\frac{e}{2}}$, where $\al \in {\mab N}$ and $\vert \al \vert \leq 2$. 
\end{prop}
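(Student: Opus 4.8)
The plan is to specialise (\ref{theo:xfh}) to the case $d=1$, where the Artin--Mazur formal group becomes the formal group of an elliptic curve, and then to combine the two congruences (\ref{theo:xfh}) (2) and (3) with the Weil bound. First I would fix the structural picture. Since $H^0(C,{\cal O}_C)\simeq {\mab F}_q$ and $C$ is smooth and proper, $C$ is geometrically integral of genus $g:=\dim_{{\mab F}_q}H^1(C,{\cal O}_C)=1$, so $E:={\rm Pic}^0_{C/{\mab F}_q}$ is an elliptic curve over ${\mab F}_q$. Because $\Phi^1_{C/{\mab F}_q}$ pro-represents $A\mapsto {\rm Ker}({\rm Pic}(C\otimes_{{\mab F}_q}A)\to {\rm Pic}(C))$, it is canonically the formal group $\widehat{E}$ of $E$; hence $h:=h(C/{\mab F}_q)$ is the height of $\widehat{E}$, so $h\in\{1,2\}$, equal to $1$ iff $E$ is ordinary and to $2$ iff $E$ is supersingular. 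In particular $h$ is finite, so only parts (2) and (3) of (\ref{theo:xfh}) are relevant here. Moreover $C$ is proper of pure dimension $d=1$ and satisfies the conditions (a)--(d) preceding (\ref{theo:xfh}): (a) and (d) are the two hypothesised isomorphisms, while (b) (the empty range $0<i<d-1$) and (c) (the case $d\ge 2$) impose nothing. Finally I would write $\#C({\mab F}_q)=1-a+q$ with $a$ the trace of the geometric Frobenius on $H^1$; purity of the weight for $C$ gives $\vert a\vert\le 2\sqrt{q}=2p^{e/2}$.

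Granting this, the implication ``$h=2\Rightarrow$ the asserted point count'' is elementary. Applying (\ref{theo:xfh}) (2) with $k=1$ and $h=2$ gives $\#C({\mab F}_q)\equiv 1~{\rm mod}~p^{\lceil e/2\rceil}$, hence $a\equiv q\equiv 0~{\rm mod}~p^{\lceil e/2\rceil}$ since $e\ge\lceil e/2\rceil$. Now one splits on the parity of $e$ and on $p$. If $e$ is even, $\lceil e/2\rceil=e/2$, so $p^{e/2}\mid a$ together with $\vert a\vert\le 2p^{e/2}$ forces $a=\al p^{e/2}$ with $\al\in\{0,\pm 1,\pm 2\}$, which gives (3). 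If $e$ is odd, $\lceil e/2\rceil=(e+1)/2$ and $\vert a\vert\le 2p^{-1/2}\cdot p^{(e+1)/2}$, so the integer $a/p^{(e+1)/2}$ has absolute value at most $2p^{-1/2}$: when $p\ge 5$ this quantity is $<1$, forcing $a=0$ and $\#C({\mab F}_q)=1+q$ (this is (1)); when $p\in\{2,3\}$ it is $<2$, so $a\in\{0,\pm p^{(e+1)/2}\}$ and $\#C({\mab F}_q)\in\{1+q,\,1+q\pm p^{(e+1)/2}\}$ (this is (2)).

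For the converse I would observe that in all three cases each listed value of $\#C({\mab F}_q)$ is $\equiv 1~{\rm mod}~p$, because $q=p^e$, $p^{(e+1)/2}$ and $p^{e/2}$ are all divisible by $p$ (using $e\ge 1$, resp.\ $e\ge 2$ when $e$ is even). If $h$ were $1$, then (\ref{theo:xfh}) (3) would give $\#C({\mab F}_q)\not\equiv 1~{\rm mod}~p$, a contradiction; since $h\in\{1,2\}$ by the first paragraph, we conclude $h=2$. This closes each of the three equivalences.

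The one genuinely essential step --- and the one I would be most careful about --- is the identification $\Phi^1_{C/{\mab F}_q}\cong\widehat{E}$ in the first paragraph, which collapses the a priori range $h\in\{1,2,\dots,\infty\}$ left open by (\ref{theo:xfh}) down to $\{1,2\}$ and is the only place where $d=1$ enters beyond generalities. Everything afterwards is mechanical but sign-sensitive bookkeeping with the ceiling function and the Weil inequality. The only mild friction is that this bookkeeping yields $\al\in\{0,\pm 1,\pm 2\}$ in (3), with all these values actually realised among elliptic curves over ${\mab F}_q$ for $e$ even (e.g.\ $a=\pm 2\sqrt{q}$ by Honda--Tate), so the ``$\al\in{\mab N}$'' appearing in the statement of (3) is to be read as $\al\in{\mab Z}$.
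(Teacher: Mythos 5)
Your proposal is correct and takes essentially the same route as the paper: the forward implications combine the congruence (\ref{eqn:kfd2td}) with the weight (Weil) bound exactly as in the paper's proof, and the converses rule out $h=1$ via (\ref{eqn:kfdbntd}) using $h\in\{1,2\}$. The only cosmetic difference is that you get $h\in\{1,2\}$ by identifying $\Phi^1_{C/{\mab F}_q}$ with the formal group of the Jacobian ${\rm Pic}^0_{C/{\mab F}_q}$, while the paper observes that the point count forces a rational point, so $C$ is an elliptic curve, and cites Silverman; your reading of ``$\al\in{\mab N}$'' in (3) as $\al\in{\mab Z}$ agrees with what the paper's own proof actually establishes.
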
 
\begin{proof} 
By the purity of weight, we have the following inequality: 
\begin{align*} 
\vert \#C({\mab F}_{q})- (1+q)\vert \leq 2\sqrt q.
\tag{4.8.1}\label{ali:caqq} 
\end{align*}  
\par 
(1): Assume that $h_{C/{\mab F}_q}=2$. 
By (\ref{eqn:kfd2td}), 
$\# C({\mab F}_q)\equiv 1~{\rm mod}~ p^{\lceil \frac{e}{2}\rceil}
=1~{\rm mod}~ p^{\frac{e+1}{2}}$.  
Hence $\# C({\mab F}_{q})=1+m p^{\frac{e+1}{2}}$ for $m \in {\mab N}$. 
By (\ref{ali:caqq}) we have the following inequality: 
\begin{align*} 
p^{\frac{1}{2}}\vert m - p^{\frac{e-1}{2}}\vert \leq 2.
\tag{4.8.2}\label{ali:cqq} 
\end{align*} 
Since $p\geq 5$, $m = p^{\frac{e-1}{2}}$. Hence 
$\# C({\mab F}_q)= 1+q$. 
\par 
Conversely, assume that $\# C({\mab F}_q)=1+q$. 
Then $C$ can be an elliptic curve over ${\mab F}_q$. 
Hence $h_{C/{\mab F}_q}=1$ or $2$ 
(\cite[IV (7.5)]{sil}). 
By (\ref{eqn:kfdbntd}) and (\ref{eqn:kfd2td}), $h_{C/{\mab F}_q}=2$. 
\par 
(2): Assume that $h_{C/{\mab F}_q}=2$. 
Then, by (\ref{ali:cqq}), 
$m=p^{\frac{e-1}{2}}$ or  $m=\pm 1+p^{\frac{e-1}{2}}$. 
Hence $\# C({\mab F}_q)= 1+q$ or 
$\# C({\mab F}_q)=1+(\pm 1+p^{\frac{e-1}{2}})p^{\frac{e+1}{2}}=1+q\pm p^{\frac{e+1}{2}}$.  
\par 
The proof of the converse implication is the same as that in (1). 
\par 
(3): Assume that $h_{C/{\mab F}_q}=2$. 
By (\ref{eqn:kfd2td}), 
$\# C({\mab F}_q)\equiv 1~{\rm mod}~ p^{\frac{e}{2}}$.  
Hence $\# C({\mab F}_{q})=1+m p^{\frac{e}{2}}$ for $\al \in {\mab N}$. 
By (\ref{ali:caqq}), 
\begin{align*} 
\vert m - p^{\frac{e}{2}}\vert \leq 2.
\tag{4.8.3}\label{ali:cbqq} 
\end{align*} 
Hence, by (\ref{ali:caqq}), 
$m=\al +p^{\frac{e}{2}}$ with $\vert \al \vert \leq 2$. 
Hence $\# C({\mab F}_q)=1+(\al+p^{\frac{e}{2}})p^{\frac{e}{2}}
=1+q+\al p^{\frac{e}{2}}$.  
\par 
The proof of the converse implication is the same as that in (1). 
\end{proof} 


\begin{rema} 
Assume that $e$ is even. 
By Honda-Tate's  theorem for elliptic curves over finite fields
(\ref{theo:pap}) below, 
the case $\vert \al \vert =1$ occurs only when 
$p\not\equiv 1~{\rm mod}~3$; 
the case $\al  =0$ occurs only when 
$p\not\equiv 1~{\rm mod}~4$. 
\end{rema}

\begin{theo}[{\bf Honda-Tate's theorem 
for elliptic curves (\cite[(4.1)]{wat}, \cite[(4.8)]{pap})}]\label{theo:pap} 
For an elliptic curve $E/{\mab F}_q$, set $t_E:=1+q-\# E({\mab F}_q)$. 
Consider the following well-defined injective map$\!:$ 
\begin{align*} 
\{{\rm isogeny}~{\rm classes}~{\rm of}~
{\rm elliptic}~{\rm curves}~E/{\mab F}_q\}\owns E
\lo 
t_E\in \{t\in {\mab Z}~\vert~\vert t \vert \leq 2\sqrt{q}\}. 
\end{align*} 
$($This map is indeed injective by Tate's theorem $(${\rm 
\cite[Main Theorem]{tae}}.$)$
The image of HT consists of the following values$:$
\par 
$(1)$ $t$ is coprime to $p$. 
\par 
$(2)$ $e$ is even and $t=\pm 2 \sqrt{q}$. 
\par 
$(3)$ $e$ is even and $p\not\equiv 1~{\rm mod}~3$ and $t=\pm \sqrt{q}$. 
\par 
$(4)$ $e$ is odd and $p=2$ or $3$ and 
$t=\pm p^{\frac{e+1}{2}}$. 
\par 
$(5)$ $e$ is odd, or $e$ is even 
and $p\not\equiv 1~{\rm mod}~4$ and $t=0$.
\par 
The case $(1)$ arises from ordinary elliptic curves over ${\mab F}_q$.  
The case $(2)$ arises from supersingular elliptic curves over ${\mab F}_q$ 
having all their endomorphisms defined over ${\mab F}_q\!;$ 
the rest cases arises from 
supersingular elliptic curves over ${\mab F}_q$ not having 
all their endomorphisms defined over ${\mab F}_q$.
\end{theo}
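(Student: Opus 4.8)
The plan is to derive this as the dimension-one case of the Honda--Tate classification of abelian varieties over finite fields, taking Tate's isogeny theorem (the injectivity of the displayed map, already invoked in the statement) for granted and carrying out the rest explicitly. Recall that for $E/{\mab F}_q$ the associated $q$-Weil number $\pi=\pi_E$ is the $q$-power Frobenius, a root of $x^2-t_Ex+q$ with $t_E=1+q-\# E({\mab F}_q)\in{\mab Z}$ and $\vert t_E\vert\leq 2\sqrt q$ by Hasse's theorem; Honda--Tate theory attaches to the conjugacy class of $\pi$ a simple isogeny class of abelian varieties over ${\mab F}_q$ of dimension $\frac{1}{2}[E_\pi:{\mab Q}]\,d_\pi$, where $E_\pi:={\mab Q}(\pi)$, the division algebra ${\rm End}^0$ is central over $E_\pi$ with local invariants ${\rm inv}_v=\frac{1}{2}$ at the real places, $0$ at the complex places and at the finite places not above $p$, and ${\rm inv}_v=\frac{v(\pi)}{v(q)}[E_{\pi,v}:{\mab Q}_p]$ (normalized valuations) at the places $v$ above $p$, and $d_\pi$ is the least common multiple of the local denominators. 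So the theorem amounts to: (i) determine which $t$ with $\vert t\vert\leq 2\sqrt q$ satisfy $\frac{1}{2}[E_\pi:{\mab Q}]\,d_\pi=1$ --- equivalently, either $E_\pi={\mab Q}$ and $d_\pi=2$, or $[E_\pi:{\mab Q}]=2$ and $d_\pi=1$; and (ii) show each surviving $\pi$ is really the Frobenius of an elliptic curve.

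For step (i) I would first treat the ordinary case $p\nmid t$: then $x^2-tx+q$ is irreducible with negative discriminant, so $E_\pi$ is imaginary quadratic, and since $\pi\ol{\pi}=q$ and $p\nmid t$ one checks that $p$ splits in $E_\pi$ with $\pi$ a unit at exactly one place above $p$, hence all $v$-invariants vanish, ${\rm End}^0=E_\pi$, $d_\pi=1$ and the dimension is $1$ --- this is case (1). In the supersingular case $p\mid t$ I would split on $[E_\pi:{\mab Q}]$. If $E_\pi={\mab Q}$ then $\pi=\pm\sqrt q$, forcing $e$ even and $t=\pm 2\sqrt q$; here ${\rm End}^0$ is the quaternion algebra over ${\mab Q}$ with invariants $\frac{1}{2}$ at $p$ (since $v(\pi)/v(q)=\frac{1}{2}$) and at $\infty$, $0$ elsewhere, so $d_\pi=2$ and the dimension is $1$ --- this is case (2), and as the center already supports all of ${\rm End}^0$, every endomorphism is defined over ${\mab F}_q$. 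If $[E_\pi:{\mab Q}]=2$, then $E_\pi$ is imaginary quadratic (it cannot be real, else $\pi=\pm\sqrt q\in{\mab Q}$), and the dimension is $1$ iff all the $v$-invariants above $p$ vanish; a short computation of ramification indices and residue degrees shows that $p$ split is impossible, that $p$ inert forces $v(\pi)=e/2$, hence $e$ even, and then ${\rm inv}_v=2(e/2)/e\equiv 0$, and that $p$ ramified gives ${\rm inv}_v=2e/(2e)\equiv 0$ for every $e$.

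It then remains to list the $t$ with $p\mid t$, $\vert t\vert\leq 2\sqrt q$ for which $E_\pi={\mab Q}(\sqrt{t^2-4q})$ is imaginary quadratic with $p$ inert or ramified. Solving the quadratic one finds exactly three families: $t=0$, with $E_\pi={\mab Q}(\sqrt{-q})$, where $p\equiv 3~{\rm mod}~4$ makes $p$ inert and $p=2$ makes $p$ ramified, whereas $e$ even with $p\equiv 1~{\rm mod}~4$ makes $p$ split and pushes the dimension to $2$ (case (5)); $t=\pm\sqrt q$, which needs $e$ even and gives $E_\pi={\mab Q}(\zeta_3)$, where $p\equiv 2~{\rm mod}~3$ makes $p$ inert and $p=3$ makes $p$ ramified, while $p\equiv 1~{\rm mod}~3$ makes $p$ split and pushes the dimension to $2$ (case (3)); and $t=\pm p^{(e+1)/2}$ with $e$ odd, where $\vert t\vert=\sqrt{pq}\leq 2\sqrt q$ forces $p\in\{2,3\}$, with $E_\pi={\mab Q}(\sqrt{p(p-4)})$ and $p$ ramified (case (4)). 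The remaining values of $t$ all have $p\nmid t$ and fall under case (1), and in each of the three supersingular families $E_\pi\neq{\mab Q}$, so not all endomorphisms are defined over ${\mab F}_q$; this yields the last assertion of the theorem.

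For step (ii): for the ordinary values this is Deuring's lifting theorem --- lift to an elliptic curve over a number field with complex multiplication by an order of $E_\pi$ having good ordinary reduction above $p$, then apply a suitable quadratic twist so the reduction has trace exactly $t$. For the supersingular values one can either invoke Honda's existence theorem (the resulting abelian variety has dimension $1$ by step (i)) or exhibit explicit models and count points: quadratic twists of $y^2=x^3-x$ for $p\equiv 3~{\rm mod}~4$, sextic twists of $y^2=x^3-1$ for $p\equiv 2~{\rm mod}~3$, and the small-characteristic supersingular curves over ${\mab F}_2$ and ${\mab F}_3$ for case (4). The step I expect to be the main obstacle is this realization, together with the bookkeeping that separates elliptic curves from higher-dimensional abelian varieties: one must check that no allowed $t$ secretly produces a quadratic $E_\pi$ with $d_\pi=2$ (an abelian surface), which is exactly what happens for $t=0$ when $e$ is even and $p\equiv 1~{\rm mod}~4$, and the local invariant computations at the primes above $p$ must be carried out with the correct normalization $v(q)=e_v\cdot e$.
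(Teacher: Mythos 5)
The paper does not prove this statement at all: it is quoted as a known result, with the proof delegated to the cited references (Waterhouse, Th.~4.1, and Papikian's exposition). Your proposal essentially reconstructs the argument of those references, namely the Honda--Tate classification specialized to dimension one, and it is correct in outline: the dimension formula $2\dim =[{\mab Q}(\pi):{\mab Q}]\,d_\pi$ together with the local invariants $\tfrac{v(\pi)}{v(q)}[E_{\pi,v}:{\mab Q}_p]$ at the places over $p$ is exactly the right tool, your treatment of the ordinary case, of $\pi=\pm\sqrt q$, and of the inert/ramified/split trichotomy is accurate, and the realization step is legitimately discharged by Honda's existence theorem (or Deuring lifting and explicit supersingular models). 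Three points deserve to be made explicit if this were written out in full. First, the claim that ``solving the quadratic one finds exactly three families'' of traces with $p\mid t$ hides the key valuation argument: if $0<v_p(t)<e/2$ the Newton polygon of $x^2-tx+q$ has two distinct integer slopes, so $p$ splits in $E_\pi$ and the invariants are non-integral; hence any surviving supersingular trace satisfies $v_p(t)\geq e/2$, and only then does $\vert t\vert\leq 2\sqrt q$ cut the list down to $t=0$, $\pm\sqrt q$, $\pm 2\sqrt q$ ($e$ even) and $\pm p^{(e+1)/2}$ ($e$ odd, $p\leq 3$). Second, in the family $t=0$ your phrasing (``$p\equiv 3 \bmod 4$ makes $p$ inert and $p=2$ makes $p$ ramified'') only describes the case $e$ even; for $e$ odd one has $E_\pi={\mab Q}(\sqrt{-p})$, in which $p$ ramifies for every $p$, which is what makes case (5) unconditional for odd $e$. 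Third, in case (2) the correct justification that all endomorphisms are defined over ${\mab F}_q$ is that ${\rm End}^0$ over ${\mab F}_q$ is already the full quaternion algebra $D_{p,\infty}$, i.e.\ coincides with the geometric endomorphism algebra (your sentence about the center supporting ${\rm End}^0$ is garbled, since the center there is ${\mab Q}$). With these points filled in, your argument is a complete and standard proof of the quoted theorem.
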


\begin{prob}\label{prob:dist}
Let $K$ be an algebraic number field and 
${\cal O}_K$ the integer ring of $K$. 
Let $x$ be a positive real number. 
\par 
(1) 
Consider the following set 
\begin{align*} 
{\cal P}(x)
:=\{{\mathfrak p}\in {\rm Spec}({\cal O}_K)~\vert~
N_{K/{\mab Q}}({\mathfrak p})\leq x ~
{\rm and}~
\log_p(\# ({\cal O}_K/{\mathfrak p}))~{\rm is~even}\}, 
\end{align*} 
where $p={\rm ch}({\cal O}_K/{\mathfrak p})$. 
\par 
Assume that $p\geq 5$. 
Let $E/K$ be an elliptic curve. 
Let $\al$ be an integer such that $\vert \al \vert \leq 2$. 
Consider the following set 
\begin{align*} 
{\cal P}'(x;E/K,\al)
:=\{{\mathfrak p}\in {\cal P}(x)~\vert~&
E~{\rm has~a~good~reduction~}{\cal E}_0~{\rm at}~{\mathfrak p}\\
&{\rm and}~
\#{\cal E}_0({\mab F}_q)=1+q+\al \sqrt{q}\}.  
\end{align*}
Set 
\begin{equation*}
{\cal P}(x;E/K,\al) :=
\begin{cases} 
{\cal P}'(x;E/K,\al)& (\vert \al \vert =2), \\
\{{\mathfrak p}\in {\cal P}(x;E/K,\al)~\vert~p\not\equiv 1~{\rm mod}~3\}
& (\vert \al \vert =1),\\
\{{\mathfrak p}\in {\cal P}(x;E/K,\al)~\vert~p\not\equiv 1~{\rm mod}~4\}
& (\al =0). 
\end{cases} 
\label{eqn:kfptd}
\end{equation*}
Then, what is the function 
\begin{align*} 
x\lom \dfrac{\# {\cal P}(x;E/K,\al)}{\#{\cal P}(x)}
\end{align*} 
when $x\rightarrow \infty$?
(I do not know whether 
$\lim_{x\rightarrow \infty}{\cal P}(x;E/K,\al)=\infty$ for each 
$\al$ such that $\vert \al \vert \leq 2$ for any non-CM elliptic curve over $K$ 
(see \cite[p.~185 Exercise 2.33 (a), (b)]{sila} 
for a CM elliptic curve over ${\mab Q}(\sqrt{-1})$: in this example, 
$\lim_{x\rightarrow \infty}{\cal P}(x;E/K,2)=\infty$, but 
${\cal P}(x;E/K,\al)=0$ for $\al\not=2$ and for any $x$). 
If $[K:{\mab Q}]$ is odd or 
if $K$ has a real embedding, 
then $\lim_{x\rightarrow \infty}
\sum_{\vert \al \vert \leq 2}{\cal P}(x;E/K,\al)=\infty$ by 
Elkies' theorems (\cite[Theorem 2]{elq}, \cite[Theorem]{elr}).) 
\par 
When $p=2$ or $3$, we can give a similar problem to the problem above 
by using (\ref{prop:cg}) (2). 
\par 
(2) Consider the following set 
\begin{align*} 
{\cal P}(x)
:=\{{\mathfrak p}\in {\rm Spec}({\cal O}_K)~\vert~
N_{K/{\mab Q}}({\mathfrak p})\leq x\}.  
\end{align*} 
\par 
Let $S/K$ be a K3 surface. 
Let $\al$ be an integer 
such that $\vert \al \vert \leq 22$. 
Consider the following set 
\begin{align*} 
{\cal P}'(x;S/K,\al)
:=\{{\mathfrak p}\in {\cal P}(x)~\vert~&
S~{\rm has~a~good~reduction~}{\cal S}_0~{\rm at}~{\mathfrak p}\\
&{\rm and}~\#{\cal S}_0({\mab F}_q)=1+q^2+\al q\}.  
\end{align*}
Then, what is the function 
\begin{align*} 
x\lom \dfrac{\# {\cal P}'(x;S/K,\al)}{\#{\cal P}(x)}
\end{align*} 
when $x\rightarrow \infty$?
(I do not know even whether 
$\lim_{x\rightarrow \infty}
\sum_{\vert \al \vert \leq 22}{\cal P}'(x;S/K,\al)=\infty$.) 
\end{prob}

\section{Two kinds of zeta functions of degenerate SNCL schemes 
over the log point of ${\mab F}_q$}\label{sec:tkz}
In this section we give a few examples of two kinds of local zeta functions of 
a separated scheme $Y$ of finite type over ${\mab F}_q$: 
one of them is defined by rational points of $Y$; 
the other is defined by the Kummer \'{e}tale cohomology of $Y$ 
when $Y$ is the underlying scheme of a proper log smooth scheme 
over the log point $s_{{\mab F}_q}$. 
\par 
First we introduce a Grothendieck group which is convenient 
in this section. 
\par 
Let $F$ be a field. Consider a Grothendieck group ${\cal K}(F)$ 
with the following generators and relations: the generators of 
${\cal K}(F)$ are $[(V,\bet)]$'s, where $V$ is a finite-dimensional 
vector space over $F$ and $\bet$ is an endomorphism of $V$ over $F$.
The relations are  as follows: $[(V,\bet)] = [(U,\al)] +[(W,\gamma)]$ 
for a commutative diagram with exact rows 
\begin{equation*} 
\begin{CD}
0 @>>> U @>>> V @>>> W @>>> 0 \\   
@. @V{\al}VV @V{\bet}VV  @V{\gam}VV  \\ 
0 @>>> U @>>> V @>>> W @>>> 0.  
\end{CD}
\end{equation*} 
\par
Let $t$ be a variable. 
Note that ${\rm det}(1-t\bet \vert V)={\rm det}(1-t\al \vert U)
{\rm det}(1-t\gamma \vert W)$. If $V=\{0\}$, we set  
${\rm det}(1-t0 \vert V)=1$ $(1\in F)$. 
We have a natural map 
\begin{equation*} 
{\rm det}(1- t\bul \vert \bul) \col {\cal K}(F) \lo F(t)^*\cap 
(1+tF[[t]])^* 
\tag{5.0.1}\label{eqn:grdet} 
\end{equation*} 
of abelian groups. Here the intersection in the target of (\ref{eqn:grdet}) 
is considered in the ring of Laurent power series 
in one variable with coefficients in $F$. 
Set $Z((V,\al),t)={\rm det}(1- t\al \vert V)$. 
\par 
Let $Y$ be a separated scheme of finite type over ${\mab F}_q$. 
Set 
\begin{align*} 
[(E_p(Y/{\mab F}_q),F^*_q)]:=
\sum_{i=0}^{\infty}(-1)^i[(H^i_{\rm rig,c}(Y/K_0({\mab F}_q)),F^*_q)]
\in {\cal K}(K_0({\mab F}_q)), 
\tag{5.0.2}\label{ali:zet}
\end{align*}   
where $E_p$ means the Euler-characteristic. 
Let  
\begin{align*} 
Z(Y/{\mab F}_q,t):={\rm exp}
\left(\sum_{n=0}^{\inf}\dfrac{\# Y({\mab F}_{q^n})}{n}t^n\right)
\tag{5.0.3}\label{ali:zcget}
\end{align*} 
be the zeta function of $Y/{\mab F}_q$.  
We can reformulate (\ref{ali:trfh}) as the following formula:   
\begin{align*} 
Z(Y/{\mab F}_q,t)=Z([(E_p(Y/{\mab F}_q),F^*_q)])^{-1}.  
\tag{5.0.4}\label{ali:trfrgh} 
\end{align*}

\begin{prop}\label{prop:ykf}
Let $Y$ be a proper SNC $($not necessarily log$)$ 
scheme over ${\mab F}_q$. 
Let $Y^{(i)}$ $(i\in {\mab Z}_{\geq 0})$ be the disjoint union of the 
$(i+1)$-fold intersections of the irreducible components of $Y$. 
Then 
\begin{align*} 
Z(Y/{\mab F}_q,t)=\prod_{i,j\geq 0}
{\rm det}(1-tF^*_q\vert H^j_{\rm rig}(Y^{(i)}/K_0({\mab F}_q)))^{(-1)^{i+j+1}}.
\tag{5.1.1}\label{ali:zytp}
\end{align*} 
\end{prop}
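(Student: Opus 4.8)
The plan is to compute the zeta function of $Y$ via the rigid cohomology with compact support $H^i_{\rm rig,c}(Y/K_0({\mab F}_q))$, using the formula (\ref{ali:trfrgh}) together with the Mayer--Vietoris type spectral sequence associated to the simple normal crossing decomposition of $Y$. Since $Y$ is proper, each $Y^{(i)}$ is proper and smooth over ${\mab F}_q$ (being a disjoint union of $(i+1)$-fold intersections of the smooth irreducible components, which meet transversally), so $H^j_{\rm rig,c}(Y^{(i)}/K_0({\mab F}_q))=H^j_{\rm rig}(Y^{(i)}/K_0({\mab F}_q))$ and these are finite-dimensional $F^*_q$-modules. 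First I would recall the descent (Mayer--Vietoris) spectral sequence
\begin{align*}
E_1^{i,j}=H^j_{\rm rig,c}(Y^{(i)}/K_0({\mab F}_q))\Longrightarrow H^{i+j}_{\rm rig,c}(Y/K_0({\mab F}_q)),
\end{align*}
which is $F^*_q$-equivariant and which holds for rigid cohomology with compact support of a proper SNC scheme (this is the analogue of the classical weight spectral sequence; it can be obtained from the \v{C}ech resolution of the constant coefficient by the sheaves pushed forward from the $Y^{(i)}$, or cited from the literature on rigid cohomology of strictly semistable or SNC schemes).

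Next I would pass to the Grothendieck group ${\cal K}(K_0({\mab F}_q))$ introduced just above the statement. Because a (first-quadrant, $F^*_q$-equivariant) spectral sequence gives equality of the alternating sums of its $E_1$-terms and its abutment in any Grothendieck group of the relevant abelian category with endomorphisms, I get
\begin{align*}
[(E_p(Y/{\mab F}_q),F^*_q)]=\sum_{i,j\geq 0}(-1)^{i+j}[(H^j_{\rm rig}(Y^{(i)}/K_0({\mab F}_q)),F^*_q)]
\end{align*}
in ${\cal K}(K_0({\mab F}_q))$. Applying the homomorphism ${\rm det}(1-t\bul\vert\bul)$ of (\ref{eqn:grdet}), which converts direct sums (hence the Grothendieck group addition) into products and sends $-[(\,,\,)]$ to the reciprocal, and then combining with the identity (\ref{ali:trfrgh}) which says $Z(Y/{\mab F}_q,t)=Z([(E_p(Y/{\mab F}_q),F^*_q)])^{-1}$, I obtain exactly
\begin{align*}
Z(Y/{\mab F}_q,t)=\prod_{i,j\geq 0}{\rm det}(1-tF^*_q\vert H^j_{\rm rig}(Y^{(i)}/K_0({\mab F}_q)))^{(-1)^{i+j+1}},
\end{align*}
which is (\ref{ali:zytp}). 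The product is finite since $Y^{(i)}=\emptyset$ for $i$ large and each $Y^{(i)}$ has bounded cohomological dimension.

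The main obstacle is establishing the Mayer--Vietoris spectral sequence for rigid cohomology with compact support in the exact $F^*_q$-equivariant form needed, and checking its convergence and first-quadrant nature; once that tool is in hand the rest is a formal manipulation in the Grothendieck group ${\cal K}(K_0({\mab F}_q))$ via the multiplicativity of ${\rm det}(1-t\bul\vert\bul)$ in short exact sequences, together with the already-recorded trace formula. A secondary point to be careful about is that for a general (possibly non-reduced or reducible along a non-simple pattern) SNC scheme one must know that the combinatorial structure — the $Y^{(i)}$ and the face maps — really does produce the stated simplicial/\v{C}ech resolution; for a genuinely simple normal crossing scheme this is standard, so I would simply invoke it. Alternatively, if one prefers to avoid rigid cohomology with compact support of singular schemes altogether, one can induct on the number of irreducible components using the closed--open decomposition $Y=Y_1\cup(Y\setminus Y_1)$ and the long exact sequence relating $H^\bullet_{\rm rig,c}$ of $Y$, of the irreducible component $Y_1$, and of the complement; this reduces everything to the proper smooth pieces $Y^{(i)}$ and again yields the displayed product after passing to ${\cal K}(K_0({\mab F}_q))$.
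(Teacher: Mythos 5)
Your proposal is correct and follows essentially the same route as the paper: the descent spectral sequence $E_1^{ij}=H^j_{\rm rig}(Y^{(i)}/K_0({\mab F}_q))\Rightarrow H^{i+j}_{\rm rig}(Y/K_0({\mab F}_q))$ (which, $Y$ being proper, coincides with the compactly supported version you use), followed by the passage to ${\cal K}(K_0({\mab F}_q))$ and the trace formula (\ref{ali:trfrgh}). The only difference is that the paper actually constructs this spectral sequence, via a simplicial \v{C}ech embedding into formal schemes and the exactness result of Chiarellotto, whereas you invoke it as citable — which is acceptable, since that construction is exactly the "main obstacle" you correctly identify.
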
 
\begin{proof} 
\par 
Let $Y_{\bul}$ be the \v{C}ech diagram of an affine open covering of $Y$ 
by 
finitely many affine open subschemes $U_j$'s of $Y$. 
Set $U^{(i)}_j:=Y^{(i)}_j\cap U_j$, 
$Y^{(i)}_0:=\coprod_jU^{(i)}_j$ and 
$Y^{(i)}_n:={\rm cosk}_0^{Y^{(i)}}(Y^{(i)}_0)_n$ $(n\in {\mab N})$. 
Let $Y_{\bul}\os{\sus}{\lo} {\cal P}_{\bul}$ be a closed immersion into 
a formally smooth formal scheme over ${\rm Spf}({\cal W}({\mab F}_q))$. 
Then we have a closed immersion $Y^{(i)}_0 \os{\sus}{\lo} \coprod^{(i)} {\cal P}_0$, 
where $\coprod^{(i)} {\cal P}_0$ is a finite sum of ${\cal P}_0$ 
which depends on $i$. 
Let $\del_j \col Y_0^{(i+1)}\lo Y_0^{(i)}$ $(0\leq j\leq i)$ 
be the standard face morphism. 
Then we have a natural morphism 
$\Del_j\col \coprod^{(i+1)} {\cal P}_0\lo \coprod^{(i)} {\cal P}_0$ 
fitting into the following commutative diagram 
\begin{equation*} 
\begin{CD}
Y_0^{(i+1)}@>{\del_j}>>Y_0^{(i)}\\
@V{\bigcap}VV @VV{\bigcap}V \\
\coprod^{(i+1)} {\cal P}_0@>{\Del_j}>>\coprod^{(i)} {\cal P}_0
\end{CD}
\end{equation*} 
and satisfying the standard relations. 
Set ${\cal P}^{(i)}_{\bul}:=
{\rm cosk}_0^{{\cal W}({\mab F}_q)}(\coprod^{(i)} {\cal P}_0)$.  
Let ${\rm sp}\col ]Y^{(i)}_{\bul}[_{{\cal P}^{(i)}_{\bul}}\lo Y^{(i)}_{\bul}$ 
be the specialization map. 
Then, as in \cite[(2.3)]{cric}, the following sequence 
\begin{align*} 
0\lo {\rm sp}_*(\Om^{\bul}_{]Y_{\bul}[_{{\cal P}_{\bul}}})\lo 
{\rm sp}_*(\Om^{\bul}_{]Y^{(0)}_{\bul}[_{{\cal P}^{(0)}_{\bul}}})\lo 
{\rm sp}_*(\Om^{\bul}_{]Y^{(1)}_{\bul}[_{{\cal P}^{(1)}_{\bul}}})\lo \cdots
\end{align*} 
is exact. 
Hence we have the following spectral sequence 
\begin{align*} 
E_1^{ij}=H^j_{\rm rig}(Y^{(i)}/K_0({\mab F}_q))\Lo H^{i+j}_{\rm rig}(Y/K_0({\mab F}_q)). 
\tag{5.1.2}\label{ali:sps} 
\end{align*} 
By (\ref{ali:trfrgh}) and this spectral sequence, 
we obtain the following formula: 
\begin{align*} 
[(E_p(Y/{\mab F}_q),F^*_q)]=
\sum_{i,j\geq 0}(-1)^{i+j}[H^j_{\rm rig}(Y^{(i)}/K_0({\mab F}_q)),F^*_q)]
\in {\cal K}(K_0({\mab F}_q)).  
\tag{5.1.3}\label{ali:zeht}
\end{align*}   
This formula implies (\ref{ali:zytp}). 
\end{proof}

\begin{coro}\label{coro:f}
Let $X/{\mab F}_q$ be a non-smooth 
combinatorial $K3$ surface {\rm (\cite{kul}, \cite{fs}, \cite{nlk3})}. 
$($We do not assume that $X$ has a log structure of simple normal crossing type.$)$
Let $m$ be the summation of the times of the processes of blowing downs making 
all irreducible components relatively minimal. 
Let $M_1$ $($resp.~$M_2)$ 
be the cardinality of the irreducible components of $\os{\circ}{X}$
whose relatively minimal models 
are ${\mab P}^2_{{\mab F}_q}$ $($resp.~Hirzeburch surfaces
=relatively minimal rational ruled surfaces$)$. 
Let $M$ be the cardinality of the irreducible components of $\os{\circ}{X}$. 
Then the following hold$:$ 
\par 
$(1)$ If $X$ is of ${\rm Type}$ ${\rm II}$ with double elliptic curve
$E/{\mab F}_q$, 
then 
\begin{equation*}
Z(\os{\circ}{X}/{\mab F}_q,t)= \dfrac{
{\rm det}(1-qtF^*_q \vert H^1_{\rm rig}(E/K_0({\mab F}_q)))^{M-2}
}{(1-t){\rm det}(1-tF^*_q \vert 
H^1_{\rm rig}(E/K_0({\mab F}_q)))(1-qt)^{M_1+2M_2+M-3+m}(1-q^2t)^M}. 
\tag{5.2.1}\label{eqn:kfmdad}
\end{equation*} 
\par 
$(2)$ Assume that $X$ is of 
${\rm Type}$ ${\rm III}$. 
Let $d$ be the cardinality of the double curves of $\os{\circ}{X}$. 
Then 
\begin{equation*}
Z(\os{\circ}{X}/{\mab F}_q,t)= \dfrac{1}{(1-t)^2(1-qt)^{M_1+2M_2+m-d}(1-q^2t)^M}. 
\tag{5.2.2}\label{eqn:kfdqad}
\end{equation*} 
\end{coro}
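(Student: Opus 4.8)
The plan is to apply Proposition~\ref{prop:ykf} to the proper SNC scheme $\os{\circ}{X}$ and then to insert the rigid cohomology, with its $F^*_q$-action, of each stratum $\os{\circ}{X}{}^{(i)}$. So I would first read off the strata from the classification of Type~II and Type~III degenerations (\cite{kul}, \cite{fs}, \cite{nlk3}). For Type~II with double elliptic curve $E/{\mab F}_q$ the dual complex is a segment with $M$ vertices, so $\os{\circ}{X}{}^{(2)}=\emptyset$, $\os{\circ}{X}{}^{(1)}$ is a disjoint union of $M-1$ copies of $E$, and $\os{\circ}{X}{}^{(0)}$ is the disjoint union of the $M$ irreducible components: the two end components are rational surfaces, each a blow-up of ${\mab P}^2_{{\mab F}_q}$ or of a Hirzebruch surface, and the $M-2$ middle components are ${\mab P}^1$-bundles over $E$ blown up at finitely many points (so here $M_1+M_2=2$). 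For Type~III the dual complex is a triangulation of $S^2$: writing $N_3$ for the number of triple points, Euler's relation gives $M-d+N_3=2$, while $\os{\circ}{X}{}^{(0)}$ is a disjoint union of $M$ rational surfaces (so $M_1+M_2=M$), $\os{\circ}{X}{}^{(1)}$ of $d$ copies of ${\mab P}^1_{{\mab F}_q}$, and $\os{\circ}{X}{}^{(2)}$ of $N_3$ rational points.

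Next I would record $H^\bullet_{\rm rig}$ with its Frobenius for each type of stratum; since every $\os{\circ}{X}{}^{(i)}$ is smooth and proper, this is crystalline cohomology tensored with $K_0({\mab F}_q)$, and I can use the blow-up and projective bundle formulas. A surface obtained from ${\mab P}^2_{{\mab F}_q}$ (resp.\ a Hirzebruch surface) by $m_V$ point blow-ups has $H^0_{\rm rig}$ and $H^4_{\rm rig}$ free of rank $1$ with $F^*_q$ acting by $1$ and $q^2$, $H^1_{\rm rig}=H^3_{\rm rig}=0$, and $H^2_{\rm rig}$ free of rank $1+m_V$ (resp.\ $2+m_V$) with $F^*_q=q$; a ${\mab P}^1$-bundle over $E$ blown up at $m_V$ points has in addition $H^1_{\rm rig}\cong H^1_{\rm rig}(E/K_0({\mab F}_q))$ and $H^3_{\rm rig}\cong H^1_{\rm rig}(E/K_0({\mab F}_q))$ with $F^*_q$ multiplied by $q$, and $H^2_{\rm rig}$ free of rank $2+m_V$ with $F^*_q=q$. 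For $E$: $H^0_{\rm rig}$ and $H^2_{\rm rig}$ of rank $1$ with $F^*_q=1,q$, and $H^1_{\rm rig}=H^1_{\rm rig}(E/K_0({\mab F}_q))$; for ${\mab P}^1_{{\mab F}_q}$: $H^0_{\rm rig}$ and $H^2_{\rm rig}$ of rank $1$ with $F^*_q=1,q$; and a rational point contributes $H^0_{\rm rig}$ of rank $1$ with $F^*_q=1$.

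Then I would substitute these into $\prod_{i,j}{\rm det}(1-tF^*_q\vert H^j_{\rm rig}(\os{\circ}{X}{}^{(i)}/K_0({\mab F}_q)))^{(-1)^{i+j+1}}$ and sort the factors by type of Frobenius eigenvalue: weight $0$ gives powers of $(1-t)$, the elliptic part gives powers of ${\rm det}(1-tF^*_q\vert H^1_{\rm rig}(E/K_0({\mab F}_q)))$, the weight-$2$ algebraic part gives powers of $(1-qt)$, the Tate-twisted elliptic part (from the $H^3_{\rm rig}$ of the ruled surfaces) gives powers of ${\rm det}(1-qtF^*_q\vert H^1_{\rm rig}(E/K_0({\mab F}_q)))$, and weight $4$ gives powers of $(1-q^2t)$. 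Collecting exponents, for Type~II: $(1-t)$ occurs to the power $-M+(M-1)=-1$, ${\rm det}(1-tF^*_q\vert H^1_{\rm rig}(E))$ to the power $(M-2)-(M-1)=-1$, ${\rm det}(1-qtF^*_q\vert H^1_{\rm rig}(E))$ to the power $M-2$, $(1-q^2t)$ to the power $-M$, and $(1-qt)$ to the power $(M-1)-\sum_V\dim H^2_{\rm rig}(V)$; since $\sum_V\dim H^2_{\rm rig}(V)=M_1+2M_2+2(M-2)+m$ (the $M_1$, $M_2$ and the $M-2$ middle components contributing $1$, $2$, $2$ on the minimal-model level and the $m$ point blow-ups contributing $1$ each), this power equals $-(M_1+2M_2+M-3+m)$, which gives (\ref{eqn:kfmdad}). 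For Type~III: $(1-t)$ occurs to the power $-M+d-N_3=-2$ by Euler's relation, $(1-q^2t)$ to the power $-M$, and $(1-qt)$ to the power $d-\sum_V\dim H^2_{\rm rig}(V)$ with $\sum_V\dim H^2_{\rm rig}(V)=M_1+2M_2+m$, i.e.\ $-(M_1+2M_2+m-d)$, which gives (\ref{eqn:kfdqad}).

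The main obstacle is not any single hard step but the bookkeeping together with the input from degeneration theory: one must use the classification of Type~II and Type~III combinatorial $K3$ surfaces to pin down the irreducible components and their relatively minimal models, one must keep precise track of the Frobenius on $H^3_{\rm rig}$ of the elliptic ruled components so that it yields exactly the numerator factor ${\rm det}(1-qtF^*_q\vert H^1_{\rm rig}(E))^{M-2}$ rather than a $(1-q^2t)$-contribution, and one must invoke the Euler identity $M-d+N_3=2$ for the dual complex of a Type~III surface to eliminate the triple-point count $N_3$ in favour of $M$ and $d$. Once these are in place, the two formulas are forced factor by factor.
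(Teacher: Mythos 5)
Your proposal is correct and follows essentially the same route as the paper: apply Proposition~\ref{prop:ykf} to $\os{\circ}{X}$, compute $H^\bullet_{\rm rig}$ of the strata $\os{\circ}{X}{}^{(i)}$ with their Frobenius actions from the Kulikov--Friedman--Scattone classification (the paper gets the rational-surface and elliptic-ruled contributions via the Deligne--Milne motive decomposition, which is the same content as your blow-up and projective-bundle formulas), and collect factors, using $M-d+T=2$ for the Type~III dual complex (a sphere; the paper's ``circle''/${\mab S}^1$ is a slip, but it uses $\chi=2$ as you do).
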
 
\begin{proof} 
First we give a remark on the rigid cohomology of 
a smooth projective rational surface $S$ over ${\mab F}_q$. 
Set $S_{{\mab F}_{q^{n}}}:=S\us{{\mab F}_q}{\otimes}{{\mab F}}_{q^{n}}$ 
and $S_{\ol{\mab F}_q}:=S\us{{\mab F}_q}{\otimes}{\ol{\mab F}_q}$. 
\par 
Let $\ol{S}_{\rm min}$ be a relatively 
minimal model of $S_{\ol{\mab F}_q}$. 
If $\ol{S}_{\rm min}\simeq {\mab P}^2_{\ol{\mab F}_q}$,
we see that the motive 
$H(S_{\ol{\mab F}_{q}})$ is as follows by \cite[(6.12)]{dmi}:
\begin{align*} 
H(S_{\ol{\mab F}_{q}})\simeq 
H({\mab P}^2_{{\ol{\mab F}_{q}}}) \oplus H(D)(-1),
\end{align*} 
where $D$ is the disjoint sum of 0-dimensional points. 
Since $H^2({\mab P}^2_{\ol{\mab F}_q})$ 
is isomorphic to a Tate-twist, if a natural number $n$ is big enough, 
then $(F^*_q)^n$ on 
$H^2_{\rm rig}(S_{{\mab F}_{q^n}}/K_0({{\mab F}}_{q^{n}}))$ 
is ${\rm diag}(q^n, \ldots, q^n)$.
Hence the eigenvalues of $(F^*_q)^n$ are $q^n$ $(n \gg 0)$ and thus 
the eigenvalues of $F^*_q$ are $q$.
\par 
If $\ol{S}_{\rm min}$ is isomorphic to 
a relatively minimal ruled surface over a smooth curve 
$C$ over $\ol{\mab F}_q$,
the motive $H(S_{\ol{\mab F}_{q}})$ is as follows by 
\cite[(6.10), (6.12)]{dmi}:
\begin{align*} 
H(S_{\ol{\mab F}_q})\simeq H(C) 
\oplus H(C)(-1)\oplus H(D)(-1).
\end{align*}  
Hence we see that $F^*_q$ on $H^2_{\rm rig}(S/K_0({\mab F}_q))$ is 
${\rm diag}(q, \ldots, q)$ as above. 
\par 
(1):  
It is easy to check that 
\begin{equation*}
H^i_{\rm rig}(\os{\circ}{X}{}^{(0)}/K_0({\mab F}_q))= 
\begin{cases} 
K_0({\mab F}_q)^M& (i=0) \\
H^1_{\rm rig}(E/K_0({\mab F}_q))^{\oplus M-2} & (i=1)\\
K_0({\mab F}_q)(-1)^{M_1+2M_2+2(M-2)+m} & (i=2)\\
H^1_{\rm rig}(E/K_0({\mab F}_q))(-1)^{\oplus M-2}& (i=3)\\
K_0({\mab F}_q)(-2)^M& (i=4)
\end{cases} 
\label{eqn:kfda0d}
\end{equation*} 
and 
\begin{equation*}
H^i_{\rm rig}(\os{\circ}{X}{}^{(1)}/K_0({\mab F}_q))= 
\begin{cases} 
K_0({\mab F}_q)^{M-1}& (i=0) \\
H^1_{\rm rig}(E/K_0({\mab F}_q))^{\oplus M-1} & (i=1)\\
K_0({\mab F}_q)(-1)^{M-1}& (i=2). 
\end{cases} 
\label{eqn:kfdakd}
\end{equation*} 
Now (\ref{eqn:kfmdad}) follows from (\ref{ali:zytp}). 
\par 
(2): 
Let $T$ be the cardinality of the triple points of $\os{\circ}{X}$. 
It is easy to check that 
\begin{equation*}
H^i_{\rm rig}(\os{\circ}{X}{}^{(0)}/K_0({\mab F}_q))= 
\begin{cases} 
K_0({\mab F}_q)^M& (i=0) \\
0 & (i=1)\\
K_0({\mab F}_q)(-1)^{M_1+2M_2+m} & (i=2)\\
0& (i=3)\\
K_0({\mab F}_q)(-2)^M& (i=4), 
\end{cases} 
\label{eqn:kafdad}
\end{equation*} 
\begin{equation*}
H^i_{\rm rig}(\os{\circ}{X}{}^{(1)}/K_0({\mab F}_q))= 
\begin{cases} 
K_0({\mab F}_q)^d& (i=0) \\
0 & (i=1)\\
K_0({\mab F}_q)(-1)^{d}& (i=2)  
\end{cases} 
\label{eqn:kfdabd}
\end{equation*} 
and 
\begin{equation*}
H^0_{\rm rig}(\os{\circ}{X}{}^{(2)}/K_0({\mab F}_q))= 
K_0({\mab F}_q)^T. 
\label{eqn:kfdiad}
\end{equation*} 
Because the dual graph of $\os{\circ}{X}$ is a circle, 
$M-d+T=\chi({\mab S}^1)=2$. 
Now (\ref{eqn:kfdqad}) follows from (\ref{ali:zytp})
\end{proof} 

\begin{rema}\label{rema:tri}
If $p\not=2$, we can prove that $T$ is even (cf.~\cite{fs}).  
However we do not use this fact in this article. 
\end{rema}

\begin{coro}\label{coro:en}
Assume that $p\not=2$. 
Let $X/{\mab F}_q$ be a non-smooth combinatorial classical Enriques surface 
{\rm (\cite{kul}, \cite{nlk3})}. 
Let $M_1$, $M_2$,  $M$, $m$  and $d$ be as in {\rm (\ref{coro:f})}. 
Then the following hold$:$ 
\par 
$(1)$ If $X$ is of 
${\rm Type}$ ${\rm II}$ with double elliptic curve $E/{\mab F}_q$, 
then 
\begin{equation*}
Z(\os{\circ}{X}/{\mab F}_q,t)= \dfrac{{\rm det}(1-qtF^*_q \vert H^1_{\rm rig}(E/K_0({\mab F}_q)))^{M-1}}{(1-t)(1-qt)^{M_1+2M_2+M-1+m}(1-q^2t)^M}. 
\tag{5.4.1}\label{eqn:kfend}
\end{equation*} 
\par 
$(2)$ Assume that $X$ is of 
${\rm Type}$ ${\rm III}$. 
Let $d$ and $T$ be the cardinalities of the double curves of 
$\os{\circ}{X}$ and the triple points of $\os{\circ}{X}$, respectively. 
Then 
\begin{equation*}
Z(\os{\circ}{X}/{\mab F}_q,t)= 
\dfrac{1}{(1-t)(1-qt)^{M_1+2M_2+m-d}(1-q^2t)^M}. 
\tag{5.4.2}\label{eqn:kenad}
\end{equation*}
\end{coro}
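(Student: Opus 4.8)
The plan is to run exactly the machine used in the proof of (\ref{coro:f}): express $Z(\os{\circ}{X}/{\mab F}_q,t)$ through the product formula (\ref{ali:zytp}) of (\ref{prop:ykf}) applied to the proper SNC scheme $\os{\circ}{X}$, after describing the strata $\os{\circ}{X}{}^{(i)}$ together with the $F^*_q$-action on their rigid cohomology. First I would reuse the two computations already carried out inside the proof of (\ref{coro:f}): first, that for a smooth projective geometrically rational surface $S/{\mab F}_q$ the operator $F^*_q$ acts as multiplication by $q$ on $H^2_{\rm rig}(S/K_0({\mab F}_q))$, so that $H^j_{\rm rig}(S/K_0({\mab F}_q))$ equals $K_0({\mab F}_q)$, $0$, $K_0({\mab F}_q)(-1)^{\oplus b}$, $0$, $K_0({\mab F}_q)(-2)$ for $j=0,1,2,3,4$, where $b$ is the Picard number of the relatively minimal model of $S$ plus the number of blow-ups performed; second, that for a (blown-up) ${\mab P}^1$-bundle over an elliptic curve $E/{\mab F}_q$ one has in addition $H^1_{\rm rig}=H^1_{\rm rig}(E/K_0({\mab F}_q))$ and $H^3_{\rm rig}=H^1_{\rm rig}(E/K_0({\mab F}_q))(-1)$, while $H^2$ stays purely of Tate type.

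The second step is to read off the stratification of $\os{\circ}{X}$ from the structure theory of combinatorial classical Enriques surfaces (\cite{nlk3}, \cite{kul}), using that for $p\neq 2$ such an $\os{\circ}{X}$ is the quotient of a combinatorial $K3$ surface by a free involution. In the Type III case, $\os{\circ}{X}{}^{(0)}$ is a disjoint union of $M$ smooth projective rational surfaces, with $M_1$, $M_2$, $m$ recording their minimal-model data exactly as in (\ref{coro:f}); $\os{\circ}{X}{}^{(1)}$ is a disjoint union of $d$ copies of ${\mab P}^1_{{\mab F}_q}$; and $\os{\circ}{X}{}^{(2)}$ is a disjoint union of $T$ $\kap$-rational points. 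The only change from the $K3$ computation is that the dual graph of $\os{\circ}{X}$ is now a triangulation of ${\mab{RP}}^2$ instead of $S^2$, so the combinatorial identity reads $M-d+T=\chi({\mab{RP}}^2)=1$. In the Type II case, $\os{\circ}{X}{}^{(0)}$ is a disjoint union of $M$ smooth projective surfaces, of which one is rational (its minimal model being counted in $M_1$ or $M_2$, all its blow-ups in $m$) and $M-1$ are (blown-up) ${\mab P}^1$-bundles over the double elliptic curve $E$; $\os{\circ}{X}{}^{(1)}$ is a disjoint union of $M-1$ copies of $E$; and $\os{\circ}{X}{}^{(2)}=\emptyset$. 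Here the dual graph is a chain, so the relevant identity is $M-(M-1)+0=1$.

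The final step is bookkeeping: substitute these data into the product formula (\ref{ali:zytp}), namely $Z(\os{\circ}{X}/{\mab F}_q,t)=\prod_{i,j\geq 0}{\rm det}(1-tF^*_q\vert H^j_{\rm rig}(\os{\circ}{X}{}^{(i)}/K_0({\mab F}_q)))^{(-1)^{i+j+1}}$, and group the factors by the weight (power of $q$) of the Frobenius eigenvalues. The Tate parts in weights $0$, $2$, $4$, once the $\os{\circ}{X}{}^{(0)}$- and $\os{\circ}{X}{}^{(1)}$- (and in Type III the $\os{\circ}{X}{}^{(2)}$-) contributions are combined and the Euler-characteristic identity above is invoked, collapse respectively to $(1-t)^{-1}$, to $(1-qt)^{-(M_1+2M_2+m-d)}$ in Type III (resp. $(1-qt)^{-(M_1+2M_2+M-1+m)}$ in Type II), and to $(1-q^2t)^{-M}$. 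In the Type II case the weight-$1$ contribution $H^1_{\rm rig}(E)$ coming from the $M-1$ elliptic-ruled components cancels exactly against the weight-$1$ contribution of the $M-1$ double curves isomorphic to $E$, leaving in the numerator only the weight-$3$ factor ${\rm det}(1-qtF^*_q\vert H^1_{\rm rig}(E/K_0({\mab F}_q)))^{M-1}$ coming from $H^3_{\rm rig}(\os{\circ}{X}{}^{(0)}/K_0({\mab F}_q))$. This yields (\ref{eqn:kfend}) and (\ref{eqn:kenad}).

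The genuinely delicate point is Step 2, not the formal manipulation: one must extract from \cite{nlk3} the precise list of irreducible components with their relatively minimal models, the components of $\os{\circ}{X}{}^{(i)}$ for $i=0,1,2$, and the topology of the dual graph; and, in the Type II case, verify the coincidence that the number of double curves isomorphic to $E$ equals the number of elliptic-ruled components, since this is exactly what forces the pure weight-$1$ part of $Z(\os{\circ}{X}/{\mab F}_q,t)$ to vanish, so that the numerator of (\ref{eqn:kfend}) becomes a power of ${\rm det}(1-qtF^*_q\vert H^1_{\rm rig}(E/K_0({\mab F}_q)))$ rather than involving ${\rm det}(1-tF^*_q\vert H^1_{\rm rig}(E/K_0({\mab F}_q)))$ as it does in the $K3$ case (\ref{coro:f}).
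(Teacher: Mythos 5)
Your proposal is correct and is essentially the paper's own argument: the paper proves (\ref{coro:en}) exactly as (\ref{coro:f}), by listing the rigid cohomology of the strata $\os{\circ}{X}{}^{(0)}$, $\os{\circ}{X}{}^{(1)}$ (and $\os{\circ}{X}{}^{(2)}$ in Type III) with their Frobenius actions, feeding them into the product formula (\ref{ali:zytp}), and using $M-d+T=\chi({\mab P}^2({\mab R}))=1$ for Type III. Your weight-by-weight bookkeeping, including the cancellation of the weight-one parts in Type II (one rational component, $M-1$ elliptic-ruled components, $M-1$ elliptic double curves), reproduces the paper's exponents exactly.
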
 
\begin{proof} 
(1): It is easy to check that 
\begin{equation*}
H^i_{\rm rig}(\os{\circ}{X}{}^{(0)}/K_0({\mab F}_q))= 
\begin{cases} 
K_0({\mab F}_q)^M& (i=0) \\
H^1_{\rm rig}(E/K_0({\mab F}_q))^{\oplus M-1} & (i=1)\\
K_0({\mab F}_q)(-1)^{M_1+2M_2+2(M-1)+m} & (i=2)\\
H^1_{\rm rig}(E/K_0({\mab F}_q))(-1)^{\oplus M-1}& (i=3)\\
K_0({\mab F}_q)(-2)^M& (i=4)
\end{cases} 
\label{eqn:kfdacd}
\end{equation*} 
and 
\begin{equation*}
H^i_{\rm rig}(\os{\circ}{X}{}^{(1)}/K_0({\mab F}_q))= 
\begin{cases} 
K_0({\mab F}_q)^{M-1}& (i=0) \\
H^1_{\rm rig}(E/K_0({\mab F}_q))^{\oplus M-1} & (i=1)\\
K_0({\mab F}_q)(-1)^{M-1}& (i=2). 
\end{cases} 
\label{eqn:kfdadd}
\end{equation*} 
\par 
(2): It is easy to check that 
\begin{equation*}
H^i_{\rm rig}(\os{\circ}{X}{}^{(0)}/K_0({\mab F}_q))= 
\begin{cases} 
K_0({\mab F}_q)^M& (i=0) \\
0 & (i=1)\\
K_0({\mab F}_q)(-1)^{M_1+2M_2+m} & (i=2)\\
0& (i=3)\\
K_0({\mab F}_q)(-2)^M& (i=4), 
\end{cases} 
\label{eqn:kfdaed}
\end{equation*} 
\begin{equation*}
H^i_{\rm rig}(\os{\circ}{X}{}^{(1)}/K_0({\mab F}_q))= 
\begin{cases} 
K_0({\mab F}_q)^d& (i=0), \\
0 & (i=1)\\
K_0({\mab F}_q)(-1)^{d}& (i=2) 
\end{cases} 
\label{eqn:kfdafd}
\end{equation*} 
and 
\begin{equation*}
H^0_{\rm rig}(\os{\circ}{X}{}^{(2)}/K_0({\mab F}_q))= 
K_0({\mab F}_q)^T. 
\label{eqn:kfdiabd}
\end{equation*} 
Because the dual graph of $\os{\circ}{X}$ is ${\mab P}^2({\mab R})$, 
$M-d+T=\chi({\mab P}^2({\mab R}))=1$. 
\end{proof}

\par 
Lastly we consider another type of local zeta functions. 
\par 
Let ${\cal V}$ be a complete discrete valuation ring of mixed characteristics with 
finite residue field ${\mab F}_q$ and let $K$ be the fraction field 
of ${\cal V}$. Let ${\mathfrak Y}$ be a proper smooth scheme over $K$
of dimension $d$ and let $I$ be the inertia group 
of the absolute Galois group ${\rm Gal}(\ol{K}/K)$.
Then the zeta function of ${\mathfrak Y}$ is defined as follows:
\begin{align*} 
Z({\mathfrak Y},t):=\Pi_{i=0}^{2d}{\rm det}(1-t\sigma \vert 
H^i_{\rm et}({\mathfrak Y}\us{K}{\otimes} \ol{K}, {\mab Q}_l)^I)^{(-1)^{h+1}},
\end{align*} 
where $\sigma \in {\rm Gal}(\ol{K}/K)$ is a lift of 
the geometric Frobenius of ${\rm Gal}(\ol{\mab F}_q/{\mab F}_q)$ 
and $l$ is a prime which is prime to $q$. If ${\mathfrak Y}$ is the 
generic fiber of a proper semistable family ${\cal Y}$ over ${\cal V}$ with 
special fiber $Y$, then the 
following formula holds by \cite{fkk} (\cite{ifkn}):
\begin{align*} 
Z({\mathfrak Y},t)=\Pi_{h=0}^{2d}{\rm det}(1-t\sigma \vert 
H^h_{{\rm ket}}(\ol{Y}, {\mab Q}_l)^I)^{(-1)^{i+1}}. 
\end{align*} 
\par 
Let ${\cal X}$ be a proper strict semistable family of 
surfaces over ${\cal V}$ with log special fiber $X$ over 
$s_{{\mab F}_q}$.

Then \cite[(6.3.3)]{msemi} tells us 
that  $Z({\cal X}_K,t)$
can be described by the log crystalline cohomologies by 
the coincidence of the monodromy filtration and the weight filtration 
(\cite[(8.3)]{nlpi}, \cite[(6.2.4)]{msemi}; however see 
\cite[(11.15)]{ndw} and \cite[(7.1)]{nlpi}.):
$$Z({\cal X}_K,t)=\Pi_{i=0}^{4}{\rm det}
(1-tF^*_q\vert (H^i_{\rm crys}(X/
{\cal W}(s_{{\mab F}_q}))_{K_0({\mab F}_q)})^{N=0})^{(-1)^{i+1}},$$
where ${\cal W}(s_{{\mab F}_q})$ is the canonical lift of $s_{{\mab F}_q}$ 
over ${\cal W}({\mab F}_q)$, 
$H^i_{\rm crys}(X/{\cal W}(s_{{\mab F}_q}))$ 
is the $i$-th log crystalline cohomology of $X/{\cal W}({\mab F}_q)$ 
and 
$$N\col H^i_{\rm crys}(X/
{\cal W}(s_{{\mab F}_q}))_{K_0({\mab F}_q)}\lo 
H^i_{\rm crys}(X/
{\cal W}(s_{{\mab F}_q}))_{K_0({\mab F}_q)}(-1)$$ 
is the $p$-adic monodromy operator. 
More generally, for a proper SNCL scheme 
$Y/s_{{\mab F}_q}$ of pure dimension $d$, 
set 
\begin{align*} 
Z(H^i(Y/K_0(s_{{\mab F}_q})),t):={\rm det}
(1-tF^*_q\vert 
(H^i_{\rm crys}(X/{\cal W}(s_{{\mab F}_q}))_{K_0({\mab F}_q)})^{N=0})^{(-1)^{i+1}}
\tag{5.4.3}\label{ali:dhxw}
\end{align*} 
and 
\begin{align*} 
Z(Y/s_{{\mab F}_q},t):=\prod_{i=0}^{2d}Z(H^i(Y/K_0(s_{{\mab F}_q})),t)^{(-1)^{i+1}} 
\tag{5.4.4}\label{ali:dhxzw}. 
\end{align*}

Let us recall the following result due to the author
(\cite[(8.3)]{nmw}, (cf.~\cite[(2.2)]{may}, \cite[(6.4)]{cla})): 

\begin{theo}[{\bf \cite[(8.3)]{nmw}}]\label{th:kumd}
Let $\kap$ be a perfect field of characteristic $p>0$. 
Let $X/s$ be an  SNCL $K3$ surface. 
Let $H^i_{{\rm log}}(X)$ $(i\in {\mab N})$ be the $i$-th log crystalline cohomology 
or the $i$-th Kummer \'{e}tale cohomology of $X/s$. 
Then the following hold$:$ 
\par
$(1)$ The $\star$-adic $(\,\star=p,l\,)$
monodromy filtration and the weight one 
on $H^i_{{\rm log}}(X)$ 
coincide.
\par
$(2)$  The following hold$:$
\par
$({\rm a})$ $X$ is of ${\rm Type}$  ${\rm I}$  if and only if $N=0$ on 
$H^2_{{\rm log}}(X)$.
\par
$({\rm b})$ $X$ is of ${\rm Type}$ ${\rm II}$ if and only if $N\not=0$ and 
$N^2=0$ on $H^2_{{\rm log}}(X)$.
\par
$({\rm c})$ $X$ is of ${\rm Type}$ ${\rm III}$ if and only if $N^2\not=0$  
on $H^2_{{\rm log}}(X)$. 
\end{theo}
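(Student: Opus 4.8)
The plan is to deduce both assertions from the weight spectral sequences attached to $X/s$. First I recall their shape for an SNCL $K3$ surface: the stratum $\os{\circ}{X}{}^{(0)}$ is a disjoint union of smooth projective surfaces over $\kap$, $\os{\circ}{X}{}^{(1)}$ is a disjoint union of smooth projective curves (the double curves of $\os{\circ}{X}$), and $\os{\circ}{X}{}^{(2)}$ is the finite set of triple points of $\os{\circ}{X}$. Both the $i$-th log crystalline cohomology (with $K_0$-coefficients) and the $i$-th Kummer \'{e}tale cohomology (with ${\mab Q}_l$-coefficients), jointly written $H^i_{{\rm log}}(X)$ as in the statement and each carrying the monodromy operator $N$, occur as the abutment of a weight spectral sequence whose $E_1$-terms are Tate twists of the ordinary crystalline, resp.\ $l$-adic, cohomology groups of the strata $\os{\circ}{X}{}^{(k)}$ $(k=0,1,2)$, on which $N$ has bidegree $(2,-2)$ and is induced by the identity maps between the common summands; on the Kummer \'{e}tale side this is the Rapoport--Zink spectral sequence of the log nearby cycles, compatible with the log crystalline one via \cite{fkk}, \cite{ifkn}, and on the log crystalline side it is the weight spectral sequence of \cite{msemi}, \cite{nlpi}. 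Since $\dim \os{\circ}{X}=2$, the groups $H^i_{{\rm log}}(X)$ with $i\not=2$ are either $0$ or pure of weight $i$, so $N=0$ on them; only $i=2$ is of interest.

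Next I would prove (1). By purity the weight spectral sequence degenerates at $E_2$, so the filtration it induces on $H^i_{{\rm log}}(X)$ is, up to the standard shift, the weight filtration $W_{\bullet}$; it therefore suffices to verify the $N^r$-isomorphisms $N^r\colon {\rm gr}^W_{i+r}H^i_{{\rm log}}(X)\os{\sim}{\lo}{\rm gr}^W_{i-r}H^i_{{\rm log}}(X)$ on the $E_2$-page. Unwinding the description of $N$ on $E_1$, this reduces to the hard Lefschetz theorem (equivalently, Poincar\'{e} duality) on the smooth projective strata $\os{\circ}{X}{}^{(k)}$, which for curves $(k=1)$ and for finite sets $(k=2)$ is immediate and for surfaces $(k=0)$ is classical. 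Equivalently, this is precisely the coincidence of the monodromy and weight filtrations for proper strict semistable families of surfaces, which holds $l$-adically by Rapoport--Zink and in the log crystalline setting by \cite[(8.3)]{nlpi}, \cite[(6.2.4)]{msemi}. Either way, (1) follows.

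For (2) I would read off $N$ on $H^2_{{\rm log}}(X)$ from the $E_2$-page, using (1) to identify the monodromy filtration with $W_{\bullet}$. Write $\Gamma$ for the dual complex of $\os{\circ}{X}$ (vertices the irreducible components, edges the double curves, $2$-cells the triple points). The spectral sequence identifies ${\rm gr}^W_0H^2_{{\rm log}}(X)$ with the degree-$2$ combinatorial cohomology $H^2(\Gamma)$ and, dually, ${\rm gr}^W_4H^2_{{\rm log}}(X)$ with a Tate twist of it; it exhibits ${\rm gr}^W_1$ and ${\rm gr}^W_3$ as built out of the $H^1$ of the double curves $\os{\circ}{X}{}^{(1)}$; and ${\rm gr}^W_2H^2_{{\rm log}}(X)\not=0$ in every case. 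Now invoke the classification of SNCL $K3$ surfaces of \cite{nlk3}. First, $X$ is of Type I if and only if $\os{\circ}{X}$ is smooth, if and only if $H^2_{{\rm log}}(X)$ is pure of weight $2$, if and only if $N=0$; here the last equivalence uses (1) together with the facts that $N$ strictly lowers $W_{\bullet}$ by $2$ and that the weights of $H^2_{{\rm log}}(X)$ lie in $[0,4]$. Next, $X$ is of Type III if and only if $\os{\circ}{X}$ has a triple point, if and only if $\Gamma$ is a triangulation of $S^2$, if and only if ${\rm gr}^W_4H^2_{{\rm log}}(X)\not=0$, if and only if $N^2\not=0$; here the last equivalence uses (1), since $N^2$ factors through an isomorphism ${\rm gr}^W_4H^2_{{\rm log}}(X)\os{\sim}{\lo}{\rm gr}^W_0H^2_{{\rm log}}(X)$. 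Finally, the remaining case --- $X$ of Type II, i.e.\ all double curves elliptic and no triple points --- is characterised by ${\rm gr}^W_1H^2_{{\rm log}}(X)\not=0$ together with ${\rm gr}^W_0H^2_{{\rm log}}(X)={\rm gr}^W_4H^2_{{\rm log}}(X)=0$, that is, by $N\not=0$ and $N^2=0$. This proves (2).

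The main obstacle is the input to (1): one must have the log crystalline weight spectral sequence --- with its monodromy operator --- set up so as to run exactly parallel to the $l$-adic Rapoport--Zink spectral sequence, and one needs the $N^r$-isomorphisms for semistable surfaces in the crystalline setting. For relative dimension $2$ both are available, so the rest --- the $E_2$-degeneration, the identification of ${\rm gr}^W_{\bullet}H^2_{{\rm log}}(X)$ with combinatorial data, and the case-by-case matching with the Kulikov-type classification --- is routine bookkeeping once the classification of SNCL $K3$ surfaces in \cite{nlk3} is taken for granted.
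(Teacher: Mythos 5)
Your overall architecture --- the weight spectral sequence (\ref{eqn:pstwtsp}), its degeneration at $E_2$, the identification of the graded pieces of $H^2_{\rm log}(X)$ with cohomology of the strata and of the dual complex, and the case-by-case matching with the Type I/II/III classification via the non-vanishing of $E_2^{1,1}$ (Type II) and $E_2^{2,0}$ (Type III) --- is the same as the paper's. But the first justification you offer for the key step in (1) is wrong. On $E_1$ the operator $N$ is indeed (up to Tate twist) the identity between equal summands placed in different spots of the $E_1$-complex, yet this does not formally descend to isomorphisms $N^r\colon E_2^{-r,i+r}\os{\sim}{\lo}E_2^{r,i-r}(-r)$ on the subquotients: that descent is precisely the $p$-adic (resp.\ $l$-adic) monodromy--weight statement, a theorem of Mokrane (resp.\ Rapoport--Zink) in relative dimension $2$ whose proofs need positivity/weight input (Hodge-index-type arguments, the Weil conjectures) and which is a well-known open problem in higher dimension --- so it cannot ``reduce to the hard Lefschetz theorem on the smooth projective strata'', and hard Lefschetz is in any case not ``equivalently Poincar\'e duality''. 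Your fallback --- simply invoking the known coincidence for semistable surfaces (\cite[(6.2.4)]{msemi}, \cite[(8.3)]{nlpi}, Rapoport--Zink) --- is legitimate and is in substance what the paper does: after type-specific vanishings, its proof of (1) uses exactly the two isomorphisms $N^2\colon E_2^{-2,4}\os{\sim}{\lo}E_2^{2,0}(-2)$ (\cite[6.2.1]{msemi}) and $N\colon E_2^{-1,3}\os{\sim}{\lo}E_2^{1,1}(-1)$ (\cite[6.2.2]{msemi}).

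A second gap: you dispose of $i\not=2$ by asserting that, since ${\rm dim}\,\os{\circ}{X}=2$, the groups $H^i_{\rm log}(X)$ $(i\not=2)$ are zero or pure of weight $i$. That is false for a general proper SNCL surface ($H^1$ and $H^3$ are mixed in general, since $E_2^{0,1}$ and $E_2^{1,0}$ can both be nonzero); what makes it true here is the K3-specific vanishing $H^1_{\rm log}(X)=0$ from \cite[(3.5) 3)]{nlk3}, plus the duality \cite[(10.5)]{ndw} in degree $3$, and, in the Kummer \'etale case, the ${\mab Q}$-structure argument; likewise the $E_2$-degeneration is a theorem (\cite[(3.6)]{ndw} in the $p$-adic case, \cite[(2.1)]{nd} in the $l$-adic case) rather than a bare appeal to ``purity''. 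These are exactly the ingredients the paper inserts (rationality of the strata in Type III, $E_2^{\pm 1,2}=0$, $E_2^{ij}=0$ for $i+j=1,3$ in Type II) before quoting Mokrane's two isomorphisms; once you replace the hard-Lefschetz claim by that citation and supply these vanishings, your reading of (2) from the graded pieces and the classification agrees with the paper's proof.
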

\begin{proof} 
For the completeness of this article, we give the proof of (\ref{th:kumd}). 
\par 
We give the proof of this theorem in the $p$-adic case 
because the proof in the $l$-adic case 
is the same as that in the $p$-adic case. 
\par 
Recall the following weight spectral sequence (\cite[3.23]{msemi}, 
\cite[(2.0.1)]{ndw}): 
\begin{align*} 
E_{1}^{-k, i+k}&= 
\us{j\geq {\rm max}\{-k, 0\}}
{\bigoplus}
H^{i-2j-k}_{\rm rig}
(\os{\circ}{X}{}^{(2j+k)}/{K_0})(-j-k)\Lo 
H^i_{\rm crys}(X/{\cal W}(s))_{K_0}.
\tag{5.5.1}\label{eqn:pstwtsp}
\end{align*}
(See \cite{ndw} for the mistakes in \cite{msemi}.) 
Here we have used Berthelot's comparison isomorphism 
$H^{i}_{\rm crys}(Y/{\cal W})_{K_0}=H^{i}_{\rm rig}(Y/{K_0})$ $(i\in {\mab N})$ 
for a proper smooth scheme $Y$ over $\kap$. 
By \cite[(3.6)]{ndw} this spectral sequence degenerates at $E_2$. 
(The $l$-adic analogue of this spectral sequence also degenerates at 
$E_2$ by Nakayama's theorem (\cite[(2.1)]{nd}).) 
\par
(1): We may assume that $\kap$ is algebraically closed.
If $X/s$ is of Type I, there is nothing to prove.
\par
If $X$ is of Type III, the double curves and 
the irreducible components are rational, and hence 
$E_1^{0,1}=E_1^{1,1}=E_1^{0,3}=E_1^{-1,3}=0$. 
By \cite[(3.5) 3)]{nlk3}, 
$H^1_{{\rm log}{\textrm -}{\rm crys}}(X/{\cal W})=0$ and hence 
we have $E_2^{-1,2}=0$. 
(Note that we also have the similar vanishing for the first Kummer \'{e}tale cohomology of $X$ by the vanishing above and the existence of 
the ${\mab Q}$-structure of $E_2^{-1,2}$ 
(cf.~the proof of \cite[(8.3)]{nlpi}). 
By taking the duality in \cite[(10.5)]{ndw}, 
$E_2^{1,2}=0$. 
By \cite[6.2.1]{msemi} the $p$-adic monodromy operator 
$N\col H^2_{\rm crys}(X/{\cal W}(s))
\lo 
H^2_{\rm crys}(X/{\cal W}(s))(-1)$ 
induces an isomorphism 
$N^2 \col E_2^{-2,4}\os{\simeq}{\lo}E_2^{2,0}(-2)=K_0$. 
\par
If $X$ is of Type II, $E_{1}^{-2,4}=E_{1}^{2,0}=0$. 
By \cite[(3.5) 3)]{nlk3} again,
$H^1_{{\rm log}{\textrm -}{\rm crys}}(X/{\cal W})=
H^3_{{\rm log}{\textrm -}{\rm crys}}(X/{\cal W})=0$. 
Hence $E_{2}^{ij}=0$ for $i+j=1,3$.
Because $N\col H^2_{\rm crys}(X/{\cal W}(s))
\lo 
H^2_{\rm crys}(X/{\cal W}(s))(-1)$ 
induces an isomorphism $E_2^{-1,3}
\os{\simeq}{\lo}E_2^{1,1}(-1)$ by \cite[6.2.2]{msemi}, we have proved (1).
\parno
(2): (2) follows from (1) and the non-vanishings of $E_2^{1,1}$ 
in the Type II case and $E_2^{2,0}$ in the Type III case, respectively.
\end{proof}

\begin{rema}
The author has found the theorem (\ref{th:kumd}) in December 1996 
by using the $p$-adic weight spectral sequence 
(\ref{eqn:pstwtsp}). 
The key point of the proof is 
to notice to use the $p$-adic weight spectral sequence  
of $X/s$ instead of the Clemens-Schmid exact sequence used in 
Kulikov's article \cite{kul}. 
(In fact, the complex analogue (\ref{theo:cct}) below of 
(\ref{th:kumd}) holds; this is a generalization of Kulikov's theorem in [loc.~cit.]
and the proof of (\ref{th:kumd}) is simpler than that in [loc.~cit.].
To my surprise, mathematicians who are working over ${\mab C}$ 
have not used the weight spectral sequence (\ref{eqn:dfwqtfi}).)
The author has finished writing the preprint \cite{nmw} by 2000 
at the latest (cf.~\cite[Remark 2.4 (3)]{nd}). 
However, after that, he has noticed that there are too many non-minor mistakes in 
theory of log de Rham-Witt complexes in Hyodo-Kato's article \cite{hk} 
and Mokrane's article \cite{msemi} 
as pointed out in \cite{ndw}. 
Because he has used Hyodo-Kato's and Mokrane's 
theory in \cite{nmw} heavily, he has to use their results in correct ways. 
However he has used his too much time for correcting their results in \cite{ndw}, 
he has no will to publish \cite{nmw} now (because \cite{nmw} is quite long and 
because he has to use more time for adding comments about 
Hyodo-Kato's and Mokrane's articles in \cite{nmw}). 
For example, $\nu$ is in  \cite{msemi} 
is {\it not} a morphism of complexes, the left $N$ in the diagram in \cite[(2.2)]{may}
is incorrect. 
\par 
In \cite[(2.2)]{may} Matsumoto has proved (\ref{th:kumd}) 
for semistable algebraic spaces of $K3$-surfaces 
after looking at the proof in \cite{nmw}. 
(See ``Proof of $p$-adic case'' in the proof of \cite[Proposition 2.2]{may}.)
\end{rema}

\begin{theo}[{\bf cf.~\cite{kul}}]\label{theo:cct}
Let $s$ be the log point of ${\mab C}$. 
Let $X/s$ be an analytic SNCL $K3$ surface. 
Let $X_{\infty}$ be the base change of 
the Kato-Nakayama space $X^{\log}$ of $X$ {\rm (\cite{kn})} with respect to 
the morphism ${\mab R}\owns x\lom {\rm exp}(2\pi{\sqrt{-1}})\in {\mab S}^1$. 
Let $N\col H^i(X_{\infty},{\mab Q})\lo H^i(X_{\infty},{\mab Q})(-1)$ 
$(i\in {\mab N})$ 
be the monodromy operator constructed in {\rm \cite{fn}}. 
Then the following hold$:$ 
\par
$(1)$ The weight filtration on $H^i(X_{\infty},{\mab Q})$ 
constructed in {\rm \cite{fn}} 
coincide with the monodromy filtration on $H^i(X_{\infty},{\mab Q})$
\par
$(2)$  The following hold$:$
\par
$({\rm a})$ $X$ is of ${\rm Type}$  ${\rm I}$  if and only if $N=0$ on 
$H^2(X_{\infty},{\mab Q})$.
\par
$({\rm b})$ $X$ is of ${\rm Type}$ ${\rm II}$ if and only if $N\not=0$ and 
$N^2=0$ on $H^2(X_{\infty},{\mab Q})$.
\par
$({\rm c})$ $X$ is of ${\rm Type}$ ${\rm III}$ if and only if $N^2\not=0$  
on $H^2(X_{\infty},{\mab Q})$. 
\end{theo}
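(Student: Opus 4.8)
The plan is to transcribe the proof of (\ref{th:kumd}) into the complex-analytic setting, with the $p$-adic weight spectral sequence (\ref{eqn:pstwtsp}) replaced by its Hodge-theoretic counterpart for the Kato-Nakayama space. First I would invoke \cite{fn} to equip $H^i(X_{\infty},{\mab Q})$ with a mixed Hodge structure, with the monodromy operator $N$, and with the Steenbrink-type weight spectral sequence
$$E_1^{-k,i+k}=\bigoplus_{j\geq \max\{-k,0\}}H^{i-2j-k}(\os{\circ}{X}{}^{(2j+k)},{\mab Q})(-j-k)\Lo H^i(X_{\infty},{\mab Q}),$$
on which $N$ acts with bidegree $(2,-2)$, lowering the weight by $2$, and which degenerates at $E_2$ by strictness of morphisms of mixed Hodge structures. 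As in the log crystalline case, after this $E_2$-degeneration one has ${\rm gr}^W_{i+k}H^i(X_{\infty},{\mab Q})=E_2^{-k,i+k}$, so that statement (1) is precisely the assertion that $N^k\col E_2^{-k,i+k}\os{\sim}{\lo}E_2^{k,i-k}(-k)$ is an isomorphism for all $i$ and $k$; the $E_2$-level maps induced by $N$ that are needed, namely $N\col E_2^{-1,3}\to E_2^{1,1}(-1)$ and $N^2\col E_2^{-2,4}\to E_2^{2,0}(-2)$ in total degree $2$, are the analytic analogues of \cite[6.2.1, 6.2.2]{msemi} and are again supplied by \cite{fn}.

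Next I would use the combinatorial classification of SNCL $K3$ surfaces (\cite{kul}, \cite[\S3]{nlk3}), which does not change when one passes from the algebraic to the analytic category: in Type I, $\os{\circ}{X}$ is an irreducible smooth $K3$ surface and $N=0$, so there is nothing to prove; in Type II the double locus of $\os{\circ}{X}$ is a single elliptic curve sitting inside a chain or cycle of rational surfaces; in Type III all irreducible components and all double curves of $\os{\circ}{X}$ are rational, there are triple points, and the dual complex is a triangulation of the $2$-sphere. Substituting the cohomology of these strata ($K3$ surfaces, rational surfaces, elliptic curves, copies of ${\mab P}^1$, and points) into the $E_1$-terms in total degrees $1$, $2$ and $3$, and using the vanishing $H^1(X_{\infty},{\mab Q})=0$ (the analytic analogue of \cite[(3.5) 3)]{nlk3}) together with Poincar\'{e} duality on the compact space $X_{\infty}$ (the analogue of \cite[(10.5)]{ndw}), I would carry out the very same $E_2$-term bookkeeping as in the proof of (\ref{th:kumd}): in Type III one obtains $E_2^{-1,2}=E_2^{1,2}=0$ and that $N^2\col E_2^{-2,4}\os{\sim}{\lo}E_2^{2,0}(-2)$ is an isomorphism of one-dimensional spaces, the latter being forced by the dual complex being a $2$-sphere; in Type II one obtains $E_2^{ij}=0$ for $i+j=1,3$ and that $N\col E_2^{-1,3}\os{\sim}{\lo}E_2^{1,1}(-1)$ is an isomorphism. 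This establishes (1). Then (2) follows from (1) and the non-vanishings: in Type III, $E_2^{2,0}\not=0$ and the above $N^2$-isomorphism give $N^2\not=0$ on $H^2$; in Type II, $E_2^{1,1}\not=0$ gives $N\not=0$ on $H^2$, while $E_2^{-2,4}={\rm gr}^W_4H^2=0$ gives $N^2=0$ on $H^2$; and in Type I, $N=0$. Since the three geometric types and the three conditions on $N$ in degree $2$ are each mutually exclusive and exhaustive, the equivalences in (2) drop out.

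I expect the main obstacle to be not the combinatorics, which is routine once the strata have been identified and which is virtually identical to the corresponding part of the proof of (\ref{th:kumd}), but rather the two Hodge-theoretic inputs imported from \cite{fn}: the $E_2$-degeneration of the weight spectral sequence of $X_{\infty}$ and the compatibility of the monodromy operator $N$ with it at the $E_2$-level. Since $X_{\infty}$ is here the Kato-Nakayama space of an analytic SNCL space that is not a priori the nearby fibre of a one-parameter semistable degeneration, these two facts cannot simply be quoted from the classical Steenbrink-Schmid theory, and it is exactly at this point that the results of \cite{fn} — which for $X_{\infty}$ play the role that Mokrane's \cite{msemi}, corrected in \cite{ndw}, plays for the log crystalline cohomology in the proof of (\ref{th:kumd}) — become indispensable. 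Granting these, I would present the remainder of the argument as a direct analytic translation of the proof of (\ref{th:kumd}), indicating only the few places where an analytic statement stands in for its crystalline counterpart.
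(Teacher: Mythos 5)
Your overall route is the one the paper takes: transcribe the proof of (\ref{th:kumd}) using the analytic weight spectral sequence (\ref{eqn:dfwqtfi}), its degeneration at $E_2$, the compatibility of $N$ with it, and then the same $E_2$-bookkeeping on the strata. But there is one genuine gap: you treat the vanishing $H^1(X_{\infty},{\mab Q})=0$ as a quotable ``analytic analogue of \cite[(3.5) 3)]{nlk3}''. That reference is a statement about log crystalline cohomology in characteristic $p$, and there is no analytic statement you can simply cite in its place; moreover the vanishing is not a formality of the $E_1$-terms, since in Type II the elliptic ruled components and the elliptic double curves contribute nonzero $H^1$'s to the $E_1$-page, and without $H^1(X_{\infty},{\mab Q})=H^3(X_{\infty},{\mab Q})=0$ the coincidence of the monodromy and weight filtrations in degrees $1$ and $3$ is not established. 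The paper supplies this input by an actual Hodge-theoretic argument: by Friedman's result \cite[(5.9)]{fr}, $H^0(X,\Om^1_{X/{\mab C}})=0$ for combinatorial Type II and Type III $K3$ surfaces (Hodge symmetry handles Type I), and since $H^1(X,{\cal O}_X)=0$, the comparison isomorphisms $A_{\mab Q}\otimes_{\mab Q}{\mab C}\simeq A_{\mab C}\simeq \Om^{\bul}_{X/s}$ of \cite{fn} give $H^1(X_{\infty},{\mab C})=H^1_{\rm dR}(X/{\mab C})=H^0(X,\Om^1_{X/s})\oplus H^1(X,{\cal O}_X)=0$. You would need to insert this step (or an equivalent one) for your argument to close.

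A second, smaller point: to kill $H^3(X_{\infty},{\mab Q})$ you invoke Poincar\'{e} duality ``on the compact space $X_{\infty}$''. Since $X$ is an abstract analytic SNCL surface, not a priori the log fibre of a global semistable degeneration, $X_{\infty}$ is not obviously a space to which Poincar\'{e} duality applies directly; the paper instead uses the self-duality of the $E_2$-terms of (\ref{eqn:dfwqtfi}) (\cite[(5.15) (2)]{nlpi}) together with the $E_2$-degeneration coming from Hodge theory to deduce $H^3(X_{\infty},{\mab C})=0$ from the vanishing of $H^1$. With these two repairs, the rest of your bookkeeping (the vanishing of $E_2^{ij}$ in total degrees $1$ and $3$, the isomorphisms $N\col E_2^{-1,3}\os{\sim}{\lo}E_2^{1,1}(-1)$ in Type II and $N^2\col E_2^{-2,4}\os{\sim}{\lo}E_2^{2,0}(-2)$ in Type III, and the derivation of (2) from (1) and the non-vanishing of $E_2^{1,1}$, resp.~$E_2^{2,0}$) agrees with the paper's proof.
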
 
\begin{proof} 
By \cite[(2.1.10)]{nlpi} 
we have the following weight spectral sequence: 
\begin{equation*}
E_{1,\inf}^{-k, h+k}= 
\us{j\geq {\rm max}\{-k, 0\}}{\bigoplus}
H^{h-2j-k}(\os{\circ}{X}{}^{(2j+k+1)},{\mab Q})(-j-k) \Lo 
H^h(X_{\infty},{\mab Q}). 
\tag{5.7.1}\label{eqn:dfwqtfi}
\end{equation*} 
By \cite[(5.9)]{fr}, 
if $X$ is a  combinatorial ${\rm Type}$ ${\rm II}$ or ${\rm Type}$ ${\rm III}$ 
$K3$ surface over ${\mab C}$, 
then $H^0(X, \Om_{X/{\mab C}}^{1})=0$.
(Of course, if $X$ is of ${\rm Type}$ ${\rm I}$, 
then $H^0(X, \Om_{X/{\mab C}}^{1})=0$ by Hodge symmetry.)
Hence $H^1(X_{\infty},{\mab C})=H^1_{\rm dR}(X/{\mab C})
=H^0(X,\Om^1_{X/s})\oplus H^1(X,{\cal O}_X)=0$. 
Here we have used the isomorphism 
between Steenbrink complexes $A_{\mab Q}\otimes_{\mab Q}{\mab C}$ 
and $A_{\mab C}$ of $X$ 
and the isomorphism between 
$A_{\mab C}$ and $\Om^{\bul}_{X/s}$ (\cite{fn}). 
By the duality of the $E_2$-terms of (\ref{eqn:dfwqtfi}) 
(\cite[(5.15) (2)]{nlpi}) and the degeneration at 
$E_2$ of (\ref{eqn:dfwqtfi}) (by Hodge theory), 
we obtain the vanishing of $H^3(X_{\infty},{\mab C})$. 
The rest of the proof is the same as that of (\ref{th:kumd}). 
\end{proof}

\begin{theo}[{\bf \cite[(15.1)]{nmw}}]\label{theo:zt}
Let $X/s_{{\mab F}_q}$ be a projective SNCL $K3$ surface. 
Then the following hold$:$
\par 
$(1)$ 
\begin{equation*}
Z(H^i(X/K_0(s_{{\mab F}_q})),t)= 
\begin{cases} 
1-t& (i=0) \\
1 & (i=1,3)\\
1-q^2t& (i=4).   
\end{cases} 
\label{eqn:kfdagd}
\end{equation*} 
\par
$(2)$ If $X$ is of 
${\rm Type}$ ${\rm II}$  with double elliptic curve
$E$, then 
\begin{align*} 
Z(H^2(X/K_0(s_{{\mab F}_q})),t)={\rm det}(1-tF^*_q \vert H^1_{\rm rig}(E/K_0))(1-qt)^{18}.
\end{align*} 
Consequently 
\begin{align*} 
Z(X/s_{{\mab F}_q},t)=\dfrac{1}{(1-t)
{\rm det}(1-tF^*_q \vert H^1_{\rm rig}(E/K_0))(1-qt)^{18}(1-q^2t)}.
\end{align*} 
\par
$(3)$ If $X$ is of 
${\rm Type}$ ${\rm III}$, then 
\begin{align*} 
Z(H^2(X/K_0(s_{{\mab F}_q})),t)=(1-t)(1-qt)^{19}.
\end{align*} 
Consequently 
\begin{align*} 
Z(X/s_{{\mab F}_q},t)=\dfrac{1}{(1-t)^2(1-qt)^{19}(1-q^2t)}.
\end{align*} 
\end{theo}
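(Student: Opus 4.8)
The plan is to compute, for each $i$, the factor $Z(H^i(X/K_0(s_{\mathbb{F}_q})),t)=\det(1-tF^*_q\mid(H^i_{\rm crys}(X/\mathcal{W}(s_{\mathbb{F}_q}))_{K_0(\mathbb{F}_q)})^{N=0})$ and then to multiply them as in (\ref{ali:dhxzw}) to obtain $Z(X/s_{\mathbb{F}_q},t)$. The cases $i=0,1,3,4$ are immediate: $H^0_{\rm crys}(X/\mathcal{W}(s_{\mathbb{F}_q}))_{K_0(\mathbb{F}_q)}=K_0(\mathbb{F}_q)$ with $N=0$ and $F^*_q=\mathrm{id}$, and by log crystalline Poincar\'e duality $H^4_{\rm crys}(X/\mathcal{W}(s_{\mathbb{F}_q}))_{K_0(\mathbb{F}_q)}\simeq K_0(\mathbb{F}_q)(-2)$ with $N=0$ and $F^*_q=q^2$, which give the factors $1-t$ and $1-q^2t$; and $H^1_{\rm crys}(X/\mathcal{W})_{K_0(\mathbb{F}_q)}=H^3_{\rm crys}(X/\mathcal{W})_{K_0(\mathbb{F}_q)}=0$ for an SNCL $K3$ surface (\cite[(3.5)]{nlk3}, as already used in the proof of (\ref{th:kumd})), which gives the factor $1$.

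The substance is the case $i=2$. I would run the $p$-adic weight spectral sequence (\ref{eqn:pstwtsp}) for $H^2_{\rm crys}(X/\mathcal{W}(s_{\mathbb{F}_q}))_{K_0(\mathbb{F}_q)}$, use its degeneration at $E_2$ (\cite[(3.6)]{ndw}) and Berthelot's comparison with the rigid cohomology of the smooth projective strata $\os{\circ}{X}{}^{(k)}$, and read off the $N$-action from (\ref{th:kumd}). In Type II one has $N\neq0$, $N^2=0$, so $N$ induces an isomorphism $\mathrm{gr}^W_3H^2=E_2^{-1,3}\xrightarrow{\sim}E_2^{1,1}(-1)=\mathrm{gr}^W_1H^2(-1)$ and vanishes on $W_2H^2$; hence $(H^2)^{N=0}=W_2H^2$, with graded pieces $\mathrm{gr}^W_1H^2=E_2^{1,1}$ and $\mathrm{gr}^W_2H^2=E_2^{0,2}$. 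Invoking the Kulikov-type structure of a Type II SNCL $K3$ degeneration from \cite{nlk3} — a chain of components whose two ends are rational surfaces and whose interior members are ruled over a fixed elliptic curve $E$, with every double curve isomorphic to $E$ and with the restriction maps from the $H^1$ of an interior component onto the $H^1$ of a double curve isomorphisms — the $E_2$-differential from the $H^1_{\rm rig}$ of the interior components to the $H^1_{\rm rig}$ of the double curves is injective with cokernel one copy of $H^1_{\rm rig}(E/K_0)$; therefore $\mathrm{gr}^W_1H^2\simeq H^1_{\rm rig}(E/K_0)$, and since $b_2(\overline{X})=22$ it follows that $\mathrm{gr}^W_2H^2$ has rank $18$ and, all remaining strata being of Tate type, equals $K_0(\mathbb{F}_q)(-1)^{18}$; this gives $Z(H^2(X/K_0(s_{\mathbb{F}_q})),t)=\det(1-tF^*_q\mid H^1_{\rm rig}(E/K_0))(1-qt)^{18}$. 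In Type III one has $N^2\neq0$, so the weights on $H^2$ are $0,2,4$ with $N^2\colon\mathrm{gr}^W_4H^2=E_2^{-2,4}\xrightarrow{\sim}E_2^{2,0}(-2)=\mathrm{gr}^W_0H^2(-2)$ an isomorphism of one-dimensional spaces; all strata ($\os{\circ}{X}{}^{(0)}$ a union of rational surfaces, $\os{\circ}{X}{}^{(1)}$ a union of $\mathbb{P}^1$'s, $\os{\circ}{X}{}^{(2)}$ the triple points) having Tate rigid cohomology, $\mathrm{gr}^W_0H^2=E_2^{2,0}$ is the one-dimensional subquotient of $H^0_{\rm rig}(\os{\circ}{X}{}^{(2)}/K_0)$ with $F^*_q=\mathrm{id}$, and $(H^2)^{N=0}=\mathrm{gr}^W_0H^2\oplus(\mathrm{gr}^W_2H^2)^{\mathrm{prim}}$ with $(\mathrm{gr}^W_2H^2)^{\mathrm{prim}}$ of rank $19$ and $F^*_q=q$; this gives $Z(H^2(X/K_0(s_{\mathbb{F}_q})),t)=(1-t)(1-qt)^{19}$. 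Substituting the five factors into (\ref{ali:dhxzw}) yields the two displayed total zeta functions.

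The step I expect to be the \emph{main obstacle} is the identification of the $E_2$-terms in degree $i=2$: showing that in Type II the $H^1$'s of the interior ruled components cancel all but a single copy of $H^1_{\rm rig}(E/K_0)$ in $\mathrm{gr}^W_1H^2$, and that $\mathrm{gr}^W_2H^2$ is purely of Tate type of weight $2$ with the asserted rank in both types. This rests on the detailed combinatorial and geometric description of Type II and Type III SNCL $K3$ degenerations from \cite{nlk3} — the dual graph being a segment, respectively a triangulated $2$-sphere; the classification of the irreducible components and double curves; the behaviour of the restriction and Gysin maps entering the $E_1$-differential — together with the identity $b_2(\overline{X})=22$. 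Everything else is formal.
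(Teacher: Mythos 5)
Your strategy coincides with the paper's: the factors for $i=0,1,3,4$ are immediate (with $H^1=H^3=0$ from \cite[(3.5)]{nlk3}), and for $i=2$ you run the weight spectral sequence (\ref{eqn:pstwtsp}), use its degeneration at $E_2$, read off the behaviour of $N$ from (\ref{th:kumd}), identify $E_2^{1,1}\simeq H^1_{\rm rig}(E/K_0({\mab F}_q))$ in Type II, observe that the weight-$2$ part is of Tate type with Frobenius eigenvalue $q$ (exactly the rational/ruled-surface motive argument in the proof of (\ref{coro:f})), and conclude by a rank count. Your identification ${\rm Ker}(N)=W_2H^2$ in Type II and the $1+19$ decomposition of ${\rm Ker}(N)$ in Type III agree with the paper's exact sequences, and your explicit dual-graph (segment) computation of $E_2^{1,1}$ is a legitimate filling-in of a step the paper only asserts.

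The one genuine gap is the rank count itself: you invoke ``$b_2(\ol{X})=22$'' as if it were part of the combinatorial description of the degeneration, but for an abstractly given projective SNCL $K3$ surface over $s_{{\mab F}_q}$ the equality $\dim_{K_0({\mab F}_q)}H^2_{\rm crys}(X/{\cal W}(s_{{\mab F}_q}))_{K_0({\mab F}_q)}=22$ is not automatic, and it is precisely what forces the exponents $18$ and $19$ (equivalently $\dim {\rm Ker}(N)=20$). The paper obtains it from \cite[(6.9)]{nlk3} ($X$ is the log special fiber of a projective semistable family ${\cal X}$ over ${\rm Spec}({\cal W}({\mab F}_q))$), \cite[(6.10)]{nlk3} (the generic fiber of ${\cal X}$ is a $K3$ surface), and the Hyodo--Kato isomorphism \cite[(5.1)]{hk}, which transports the second Betti number $22$ of the generic fiber to the log crystalline $H^2$ of $X$. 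Without this lifting argument (or an equivalent verification that the $E_2$-terms in total degree $2$ have total dimension $22$, which is the same nontrivial input in disguise), your ranks $18$ and $19$ remain unproved; everything else in your proposal matches the paper's proof.
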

\begin{proof}
By \cite[(3.5)]{nlk3}, $H^i_{\rm crys}(X/{\cal W}(s_{{\mab F}_q}))=0$ $(i=1,3)$.  
Thus $Z(H^i(X/K_0(s_{{\mab F}_q})),t)=1$ $(i=1,3)$. 
By \cite[(6.9)]{nlk3}, $X$ is the log special fiber of 
a projective semistable family ${\cal X}$ over 
${\rm Spec}(W({\mab F}_q))$. 
By \cite[(6.10)]{nlk3}, the generic fiber of ${\cal X}$ is a K3 surface. 
Hence, by Hyodo-Kato's isomorphism (\cite[(5.1)]{hk}) 
(however see \cite[\S7]{ndw} for incompleteness of 
the proof of Hyodo-Kato isomorphism), 
${\rm dim}_{K_0({\mab F}_q)}
H^2_{\rm crys}(X/{\cal W}(s_{{\mab F}_q}))_{K_0({\mab F}_q)}=22$. 
\par 
(1): In this case, by (\ref{th:kumd}), $N\not=0$, $N\col E_2^{-1,2}\lo E_2^{1,1}$ 
is an isomorphism,  $N^2=0$ on 
$H^2_{\rm crys}(X/W(s_{{\mab F}_q}))_{K_0({\mab F}_q)}$ 
and $E_2^{-2,4}=E_2^{2,0}=0$.  
Hence we have the following exact sequence by (\ref{th:kumd}): 
$$ 0 \lo E_2^{1,1} \lo {\rm Ker}(N) \lo E_2^{0,2}\lo 0.$$
Because $E_2^{1,1}\simeq H^1_{\rm rig}(E/K_0({\mab F}_q))$, 
${\rm det}(1-tF^*_q \vert E_2^{1,1})=
{\rm det}(1-tF^*_q \vert H^1_{\rm rig}(E/K_0({\mab F}_q))$. 
On the other hand, $E_2^{0,2}$ is a 
subquotient of 
$$H^0_{\rm rig}(X^{(2)}/K_0({\mab F}_q))(-1)
\oplus H^2_{\rm rig}(X^{(1)}/K_0({\mab F}_q)).
$$ 
Hence $F^*_q$ on $E_2^{0,2}$ is 
${\rm diag}(q, \ldots, q)$ as shown in the proof of (\ref{coro:f}). 
Since ${\rm Ker}(N)$ is 20-dimensional, we obtain (2).
\parno
(2): In this case, by (\ref{th:kumd}), $N^2\not=0$, $N^3=0$ and
$E_2^{-1,3}=E_2^{1,1}=0$. 
Because $N^2 \col E_2^{-2,4} {\lo} E_2^{2,0}(-2)$ is 
an isomorphism, 
$N \col  E_2^{0,2} {\lo} E_2^{2,0}(-1)$ is surjective and
hence the kernel of $N$ is 
$20$-dimensional.
Obviously $F^*_q={\rm id}$ on  $E_2^{2,0}$. 
As in (1), $F^*_q$ on $E_2^{0,2}$ is 
${\rm diag}(q, \ldots, q)$. Hence we obtain (2).
\end{proof}

\begin{theo}[{\bf \cite[(15.2)]{nmw}}]\label{theo:zeet}
Let $X/s_{{\mab F}_q}$ be 
a projective non-smooth SNCL classical Enriques surface. 
Then 
\begin{equation*}
Z(H^i(X/K_0(s_{{\mab F}_q})),t)= 
\begin{cases} 
1-t& (i=0), \\
1& (i=1,3), \\
(1-qt)^{10} & (i=2)\\
1-q^2t& (i=4).   
\end{cases} 
\label{eqn:kfdahd}
\end{equation*} 
Consequently 
\begin{align*} 
Z(X/s_{{\mab F}_q},t)=\dfrac{1}{(1-t)(1-qt)^{10}(1-q^2t)}.
\end{align*} 
\end{theo}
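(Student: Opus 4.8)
The plan is to compute each factor $Z(H^i(X/K_0(s_{{\mab F}_q})),t)$ occurring in the defining product (\ref{ali:dhxzw}) separately, in the spirit of the proof of (\ref{theo:zt}). The first step is to fix a good model: as in the proof of (\ref{theo:zt}), $X$ is the log special fiber of a projective semistable family ${\cal X}$ over ${\rm Spec}({\cal W}({\mab F}_q))$ whose generic fiber is a smooth classical Enriques surface (cf.\ \cite{nlk3}). Hence Hyodo--Kato's isomorphism (\cite[(5.1)]{hk}; see \cite[\S7]{ndw} for the incompleteness of its proof) identifies ${\rm dim}_{K_0({\mab F}_q)}H^i_{\rm crys}(X/{\cal W}(s_{{\mab F}_q}))_{K_0({\mab F}_q)}$ with the $i$-th Betti number of a smooth Enriques surface, namely with $1,0,10,0,1$ for $i=0,1,2,3,4$. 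In particular $H^1_{\rm crys}$ and $H^3_{\rm crys}$ vanish after $\otimes_{{\cal W}({\mab F}_q)}K_0({\mab F}_q)$, so $Z(H^i(X/K_0(s_{{\mab F}_q})),t)=1$ for $i=1,3$; and $H^0_{\rm crys}$, $H^4_{\rm crys}$ are one-dimensional with $N=0$ and with $F^*_q$ acting by $1$, resp.\ $q^2$, so $Z(H^0(X/K_0(s_{{\mab F}_q})),t)=1-t$ and $Z(H^4(X/K_0(s_{{\mab F}_q})),t)=1-q^2t$. Everything thus comes down to the case $i=2$.

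For $i=2$ I would run the $p$-adic weight spectral sequence (\ref{eqn:pstwtsp}) of $X/s_{{\mab F}_q}$, which degenerates at $E_2$ by \cite[(3.6)]{ndw}, and prove that the monodromy-weight filtration on $H^2:=H^2_{\rm crys}(X/{\cal W}(s_{{\mab F}_q}))_{K_0({\mab F}_q)}$ is concentrated in weight $2$. The weight-$0$ graded piece is $E_2^{2,0}={\rm coker}(H^0_{\rm rig}(\os{\circ}{X}{}^{(1)}/K_0({\mab F}_q))\lo H^0_{\rm rig}(\os{\circ}{X}{}^{(2)}/K_0({\mab F}_q)))$, which is the top cohomology of the dual complex $\Gamma$ of $\os{\circ}{X}$; this vanishes in both types, since in Type II there are no triple points, whereas in Type III one has $\Gamma={\mab P}^2({\mab R})$ by (\ref{coro:en}) and $H^2({\mab P}^2({\mab R}),{\mab Q})=0$ --- this is exactly where the Enriques case diverges from the $K3$ case, in which $\Gamma$ is the $2$-sphere and $E_2^{2,0}\not=0$. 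The weight-$1$ graded piece is $E_2^{1,1}={\rm coker}(H^1_{\rm rig}(\os{\circ}{X}{}^{(0)}/K_0({\mab F}_q))\lo H^1_{\rm rig}(\os{\circ}{X}{}^{(1)}/K_0({\mab F}_q)))$, the outgoing differential being zero because $\os{\circ}{X}{}^{(2)}$ is $0$-dimensional; in Type III this vanishes since every double curve is rational, while in Type II it vanishes because the dual graph is a chain one endpoint of which carries a rational component, so the Gysin restriction map, written in terms of the elliptic-ruled components and the mutually isomorphic elliptic double curves, is block-triangular with identity diagonal blocks and hence bijective (equivalently, $E_2^{1,1}=0$ is forced by the purely Tate shape of the numerator of the zeta function of $\os{\circ}{X}$ computed in (\ref{coro:en})). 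The weight-$3$ and weight-$4$ graded pieces then vanish too, by the duality of $E_2$-terms \cite[(10.5)]{ndw}. Consequently $N=0$ on $H^2$, so $(H^2)^{N=0}=H^2={\rm gr}^W_2H^2$, which is a subquotient of $E_1^{0,2}=H^2_{\rm rig}(\os{\circ}{X}{}^{(0)}/K_0({\mab F}_q))\oplus H^0_{\rm rig}(\os{\circ}{X}{}^{(2)}/K_0({\mab F}_q))(-1)$. Since every irreducible component of $\os{\circ}{X}$ is a rational or elliptic-ruled surface, or a blow-up of one, and the triple points are rational points, $F^*_q$ acts on $E_1^{0,2}$ as multiplication by $q$ by the computation carried out in the proof of (\ref{coro:f}), hence so does it on the $10$-dimensional subquotient $H^2$. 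Therefore $Z(H^2(X/K_0(s_{{\mab F}_q})),t)={\rm det}(1-tF^*_q\vert H^2)=(1-qt)^{10}$.

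Substituting these values into (\ref{ali:dhxzw}), and writing $Z(H^i)$ for $Z(H^i(X/K_0(s_{{\mab F}_q})),t)$, gives
\begin{align*}
Z(X/s_{{\mab F}_q},t)=Z(H^0)^{-1}Z(H^1)Z(H^2)^{-1}Z(H^3)Z(H^4)^{-1}=\frac{1}{(1-t)(1-qt)^{10}(1-q^2t)},
\end{align*}
which is the asserted formula. The step I expect to be the main obstacle is the vanishing ${\rm gr}^W_1H^2=0$ in the Type II case: unlike for a Type II $K3$ surface, where the analogous Gysin map has a one-dimensional cokernel that produces the factor ${\rm det}(1-tF^*_q\vert H^1_{\rm rig}(E/K_0))$ appearing in (\ref{theo:zt}), here one has to exploit the precise combinatorial structure of a degenerate classical Enriques surface --- for example by realizing it as an \'etale ${\mab Z}/2$-quotient of a degenerate $K3$ surface, which is legitimate since $p\not=2$ --- in order to see that this restriction map is actually an isomorphism, so that no weight-$1$ (non-Tate) class survives into $H^2$ and the monodromy operator is forced to vanish there.
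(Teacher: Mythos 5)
Your proposal is correct and follows essentially the same route as the paper: the paper likewise reduces everything to the $i=2$ factor, obtains $H^1=H^3=0$ and $\dim H^2=10$ from the projective semistable lift with classical Enriques generic fiber together with the Hyodo--Kato isomorphism, and then uses the $E_2$-degenerate weight spectral sequence (\ref{eqn:pstwtsp}) with $E_2^{-2,4}=E_2^{-1,3}=E_2^{1,1}=E_2^{2,0}=0$, so that $N=0$ and $F^*_q$ acts by $q$ on the $10$-dimensional $E_2^{0,2}$, exactly as in the proof of (\ref{theo:zt}). The only difference is that you spell out the vanishings the paper merely ``notes'' (dual complex a chain or ${\mab P}^2({\mab R})$, rational double curves in Type III, the triangular Gysin matrix in Type II, duality for the negative-index terms), which is fine; just drop the parenthetical claim that $E_2^{1,1}=0$ is ``equivalently forced'' by the zeta function in (\ref{coro:en}) --- that numerator is $\det(1-qtF^*_q\vert H^1_{\rm rig}(E/K_0({\mab F}_q)))^{M-1}$, hence not Tate, and in any case cancellation in an alternating product only reflects equality of dimensions and cannot show that the restriction map $H^1_{\rm rig}(\os{\circ}{X}{}^{(0)})\lo H^1_{\rm rig}(\os{\circ}{X}{}^{(1)})$ is bijective.
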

\begin{proof} 
By \cite[(7.1)]{nlk3},  
$H^i_{\rm crys}(X/{\cal W}(s_{{\mab F}_q}))=0$ $(i=1,3)$ 
and hence $Z(H^i(X/K_0(s_{{\mab F}_q})),t)=1$ $(i=1,3)$. 
By \cite[(7.1)]{nlk3} and the argument in \cite[(6.8), (6.11)]{nlk3}, 
$X$ is the log special fiber of a projective semistable family 
${\cal X}$ over ${\cal W}({\mab F}_q)$ and 
the generic fiber of ${\cal X}$ is a classical Enriques surface. 
Hence ${\rm dim}_{K_0({\mab F}_q)}
H^2_{\rm crys}(X/{\cal W}(s_{{\mab F}_q}))_{K_0 ({\mab F}_q)}=10$. 
The rest of the proof is the same as that of (\ref{theo:zt}) by noting that 
$0=E_2^{-1,3}=E_2^{11}=E_2^{20}=E_2^{-2,4}$, 
where $E_2^{\bul \bul}$'s are 
$E_2$-terms of the spectral sequence (\ref{eqn:pstwtsp}). 
\end{proof} 

\parno
\begin{center}
{{\rm \Large{\bf Appendix}}}
\end{center}

\section{A remark on Katsura and Van der Geer's result}\label{rema:arkg} 
In this section we generalize the argument in the proof of (\ref{theo:xfh}) (3).
\par 
First we recall the following theorem in \cite{ny}. 
This is a generalization of Katsura and Van der Geer's theorem
(\cite[(5.1), (5.2), (16.4)]{vgk}).   

\begin{theo}[{\bf \cite[(2.3)]{ny}}]\label{prop:nex} 
Let $\kap$ be a perfect field of characteristic $p>0$. 
Let $Y$ be a proper scheme over $\kap$.
$($We do not assume that $Y$ is smooth over $\kap$.$)$ 
Let $q$ be a nonnegative integer. 
Assume that $H^q(Y,{\cal O}_Y)\simeq \kap$,  
that $H^{q+1}(Y,{\cal O}_Y)=0$ and 
that $\Phi^q_{Y/\kap}$ is pro-representable. 
Assume also 
that the Bockstein operator 
\begin{align*} 
\bet \col H^{q-1}(Y,{\cal O}_Y)\lo 
H^q(Y,{\cal W}_{n-1}({\cal O}_Y))
\end{align*} 
arising from the following exact sequence 
\begin{align*} 
0\lo {\cal W}_{n-1}({\cal O}_Y)\os{V}{\lo} {\cal W}_n({\cal O}_Y)
{\lo} {\cal O}_Y\lo 0
\end{align*} 
is zero for any $n\in {\mab Z}_{\geq 2}$.   
Let $V\col  {\cal W}_{n-1}({\cal O}_Y) \lo {\cal W}_n({\cal O}_Y)$ 
be the Verschiebung morphism and 
let $F\col {\cal W}_{n}({\cal O}_Y) \lo {\cal W}_n({\cal O}_Y)$ 
be the induced morphism by the Frobenius endomorphism of ${\cal W}_{n}(Y)$. 
Let $n^q(Y)$ be the minimum of positive integers $n$'s  
such that the induced morphism 
$$F\col H^q(Y,{\cal W}_n({\cal O}_Y))\lo H^q(Y,{\cal W}_n({\cal O}_Y))$$ 
by the $F\col {\cal W}_{n}({\cal O}_Y) \lo {\cal W}_n({\cal O}_Y)$ is not zero. 
$($If $F=0$ for all $n$, then set $n^q(Y):=\infty.)$ 
Let $h^q(Y/\kap)$ be the height of 
the Artin-Mazur formal group $\Phi^q_{Y/\kap}$ of $Y/\kap$. 
Then $h^q(Y/\kap)=n^q(Y)$.
\end{theo}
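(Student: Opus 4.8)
The plan is to reduce the asserted equality $h^q(Y/\kappa)=n^q(Y)$ to the structure of the Cartier--Dieudonn\'{e} module $M:=H^q(Y,{\cal W}({\cal O}_Y))=\varprojlim_nH^q(Y,{\cal W}_n({\cal O}_Y))$, which by Artin--Mazur (\cite{am}) equals $D(\Phi^q_{Y/\kappa})$. The first step is to identify the finite levels: under the four hypotheses the natural maps induce isomorphisms $M/V^nM\os{\sim}{\lo}H^q(Y,{\cal W}_n({\cal O}_Y))$ of $F$-modules for all $n\geq 1$, so that
\[
n^q(Y)=\min\{n\in{\mathbb Z}_{\geq 1}\mid F\neq 0\ \text{on}\ M/V^nM\}.
\]
Indeed, the exact sequence $0\to{\cal O}_Y\os{V^{n-1}}{\lo}{\cal W}_n({\cal O}_Y)\os{R}{\lo}{\cal W}_{n-1}({\cal O}_Y)\to 0$ together with $H^{q+1}(Y,{\cal O}_Y)=0$ makes the restriction maps on $H^q$ surjective, so the tower is Mittag--Leffler, $M$ surjects onto each $H^q(Y,{\cal W}_n({\cal O}_Y))$ and the relevant $\varprojlim^1$-term vanishes; the sheaf sequence $0\to{\cal W}({\cal O}_Y)\os{V^n}{\lo}{\cal W}({\cal O}_Y)\to{\cal W}_n({\cal O}_Y)\to 0$ then yields $M/V^nM=H^q(Y,{\cal W}_n({\cal O}_Y))$. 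The Bockstein--vanishing hypothesis enters here precisely as the injectivity of each $V\colon H^q(Y,{\cal W}_{n-1}({\cal O}_Y))\to H^q(Y,{\cal W}_n({\cal O}_Y))$, i.e. the $V$-torsion freeness of $M$; with $H^q(Y,{\cal O}_Y)\simeq\kappa$ this makes $M$ a one-dimensional formal Dieudonn\'{e} module, $M/VM\simeq\kappa$. One also checks $F(V^nM)\subseteq V^nM$, so that $F$ descends to each $M/V^nM$.

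The main work is the case $h:=h^q(Y/\kappa)<\infty$. Base change to $\ol\kappa$ is harmless, since $H^q(\ol Y,{\cal W}({\cal O}_{\ol Y}))=M\otimes_{{\cal W}}{\cal W}(\ol\kappa)$ compatibly with $F$ by the \v{C}ech argument already used for (\ref{theo:xfh}), and the vanishing of $F$ is insensitive to it. Over $\ol\kappa$ the formal group $\Phi^q_{Y/\kappa}$ has dimension $1$ and height $h$, so $M$ is free of rank $h$ over ${\cal W}$ with the normal form $M=\bigoplus_{i=0}^{h-1}{\cal W}e_i$, $Ve_i=e_{i+1}$ $(0\leq i<h-1)$, $Ve_{h-1}=pe_0$, whence $Fe_0=e_{h-1}$ and $Fe_i=pe_{i-1}$ $(1\leq i\leq h-1)$ --- this is exactly the computation already made for (\ref{prop:eobs}). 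A direct inspection gives $V^nM=\langle e_n,\dots,e_{h-1}\rangle+pM$ for $1\leq n\leq h-1$, $V^hM=pM$, and ${\rm Im}(F)\subseteq\langle e_{h-1}\rangle+pM$. Hence ${\rm Im}(F)\subseteq V^nM$ for $n<h$, so $F=0$ on $M/V^nM$; while $Fe_0=e_{h-1}\notin pM=V^hM$, so $F\neq 0$ on $M/V^hM$. Therefore $n^q(Y)=h$.

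When $h=\infty$, as in the proof of (\ref{theo:xfh})(1) the module $M$ is ${\cal W}$-torsion, i.e. $p=0$ on $M$; since $V$ is injective on $M$ and $V\circ F=p=0$, this forces $F=0$ on $M$, hence on every $M/V^nM$, so $n^q(Y)=\infty=h$. Combining the two cases gives $h^q(Y/\kappa)=n^q(Y)$. I expect the genuine obstacle to be the first step --- making the identification $H^q(Y,{\cal W}_n({\cal O}_Y))=M/V^nM$ of $F$-modules rigorous, that is, controlling the inverse limit and its $\varprojlim^1$-term and pinpointing where each of the four hypotheses is consumed --- rather than the explicit Dieudonn\'{e}-module bookkeeping of the last two steps, which is essentially the computation already carried out for (\ref{prop:eobs}).
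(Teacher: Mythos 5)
This paper never proves (\ref{prop:nex}): it is recalled verbatim from \cite[(2.3)]{ny} (a generalization of van der Geer--Katsura), so there is no internal proof to compare yours against. On its own terms your argument is correct, and it runs on exactly the ingredients the paper uses elsewhere: the Artin--Mazur identification $D(\Phi^q_{Y/\kap})=H^q(Y,{\cal W}({\cal O}_Y))$ with its $F$ and $V$ (\cite[II (4.3)]{am}, invoked in the proof of (\ref{theo:xfh})) and the normal form $D(\kap)/D(\kap)(F-V^{h-1})$ over $\ol{\kap}$ (as in (\ref{prop:eobs})). The Dieudonn\'e bookkeeping is right: $V^nM=\langle e_n,\dots,e_{h-1}\rangle+pM$ for $n<h$, ${\rm Im}(F)\subseteq\langle e_{h-1}\rangle+pM\subseteq V^nM$, while $Fe_0=e_{h-1}\notin V^hM=pM$; together with the identification $M/V^nM\simeq H^q(Y,{\cal W}_n({\cal O}_Y))$ this gives $n^q(Y)=h$, and the $h=\infty$ case gives $n^q(Y)=\infty$.

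Three refinements. (i) Your Mittag--Leffler remark is aimed at the wrong degree: the surjectivity you get from $H^{q+1}(Y,{\cal O}_Y)=0$ concerns the $H^q$-tower, while the $\varprojlim^1$ obstructing $H^q(Y,{\cal W}({\cal O}_Y))=\varprojlim_nH^q(Y,{\cal W}_n({\cal O}_Y))$ sits on the $H^{q-1}$-tower; it vanishes because those are finite-length ${\cal W}_n(\kap)$-modules, and in fact you never need it: the Milnor sequence always surjects onto $\varprojlim_nH^q$, which with the surjective transition maps gives $M\twoheadrightarrow H^q(Y,{\cal W}_n({\cal O}_Y))$, and the kernel is $V^nM$ by the long exact sequence of $0\lo {\cal W}({\cal O}_Y)\os{V^n}{\lo}{\cal W}({\cal O}_Y)\lo {\cal W}_n({\cal O}_Y)\lo 0$. (ii) In your organization the Bockstein hypothesis does no work: $V$-injectivity of $M$ is automatic for the Cartier module of a formal Lie group, so the finite-level injectivity of $V$ (equivalently $\bet=0$) comes out as a consequence of your identification rather than being consumed as an input. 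That is harmless (you prove a formally stronger statement, provided \cite[II (4.3)]{am} applies under pro-representability alone, which is also how this paper uses it), but say so; in the finite-level route of van der Geer--Katsura and \cite{ny} the hypothesis is genuinely used, exactly as in the paper's own (\ref{prop:nnz}) (1), to obtain $0\lo H^q(Y,{\cal W}_{n-1}({\cal O}_Y))\os{V}{\lo}H^q(Y,{\cal W}_n({\cal O}_Y))\lo H^q(Y,{\cal O}_Y)\lo 0$. (iii) For $h=\infty$, justify ``$p=0$ on $M$'' after base change to $\ol{\kap}$ (height $\infty$ and dimension $1$ force $\Phi\otimes\ol{\kap}\simeq \widehat{\mab G}_a$, on whose Cartier module $F=0$), as the paper does in the proof of (\ref{theo:xfh}) (1); your base-change remark already descends ``$F=0$ at each finite level'' since ${\cal W}_n(\kap)\lo{\cal W}_n(\ol{\kap})$ is faithfully flat and $F$ is $\sig$-semilinear with $\sig$ bijective.
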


\begin{prop}\label{prop:nnz}
Let the notations be as in {\rm (\ref{prop:nex})}.  
Let $D(\kap)$ be the Cartier-Dieudonn\'{e} algebra over $\kap$. 
Then the following hold$:$ 
\par 
$(1)$ ${\rm length}_{\cal W} H^q(Y,{\cal W}_n({\cal O}_Y))=n$ $(n\in {\mab Z}_{\geq 1})$. 
\par
$(2)$ Set $h:=h(\Phi^q_{Y/\kap})$. Assume that $h<\infty$. 
Let us consider the following natural surjective morphism 
$H^q(Y,{\cal W}({\cal O}_Y))\lo H^q(Y,{\cal W}_h({\cal O}_Y))$. 
Then this morphism induces the following isomorphism
\begin{align*} 
H^q(Y,{\cal W}({\cal O}_Y))/p\os{\sim}{\lo} H^q(Y,{\cal W}_h({\cal O}_Y))
\tag{6.2.1}\label{ali:oyyh}
\end{align*} 
of $D(\kap)/p$-modules.
\end{prop}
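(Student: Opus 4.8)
The plan is to prove $(1)$ by induction on $n$ from two standard short exact sequences of Witt sheaves, and then to deduce $(2)$ from $(1)$ together with the identification of $M:=H^q(Y,{\cal W}({\cal O}_Y))$ with the Dieudonn\'{e} module $D(\Phi^q_{Y/\kap})$ of a one-dimensional formal group of finite height and with the equality $h^q(Y/\kap)=n^q(Y)$ of {\rm (\ref{prop:nex})}. For $(1)$, I would use, for $n\geq 2$, the two exact sequences of sheaves on $\os{\circ}{Y}$
\[
0\lo {\cal W}_{n-1}({\cal O}_Y)\os{V}{\lo}{\cal W}_n({\cal O}_Y)\os{R^{n-1}}{\lo}{\cal O}_Y\lo 0,
\qquad
0\lo {\cal O}_Y\os{V^{n-1}}{\lo}{\cal W}_n({\cal O}_Y)\os{R}{\lo}{\cal W}_{n-1}({\cal O}_Y)\lo 0,
\]
where $R$ is the restriction morphism and $R^{n-1}$ the projection onto the zeroth Witt component. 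From the long exact sequence of the second one and the hypothesis $H^{q+1}(Y,{\cal O}_Y)=0$, the map $R\col H^q(Y,{\cal W}_m({\cal O}_Y))\lo H^q(Y,{\cal W}_{m-1}({\cal O}_Y))$ is surjective for every $m\geq 2$, hence so is the composite $R^{n-1}\col H^q(Y,{\cal W}_n({\cal O}_Y))\lo H^q(Y,{\cal O}_Y)$. From the long exact sequence of the first one and the vanishing of the Bockstein operator $\bet\col H^{q-1}(Y,{\cal O}_Y)\lo H^q(Y,{\cal W}_{n-1}({\cal O}_Y))$ assumed in {\rm (\ref{prop:nex})}, the map $V$ is injective on $H^q$ with image ${\rm Ker}(R^{n-1})$; combining these, we get a short exact sequence
\[
0\lo H^q(Y,{\cal W}_{n-1}({\cal O}_Y))\os{V}{\lo}H^q(Y,{\cal W}_n({\cal O}_Y))\os{R^{n-1}}{\lo}H^q(Y,{\cal O}_Y)\lo 0 .
\]
Since $H^q(Y,{\cal O}_Y)\simeq\kap$ has ${\cal W}$-length $1$ and $H^q(Y,{\cal W}_1({\cal O}_Y))=H^q(Y,{\cal O}_Y)$, an induction on $n$ yields ${\rm length}_{\cal W}H^q(Y,{\cal W}_n({\cal O}_Y))=n$.

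For $(2)$: since the transition maps $R$ are surjective (as just shown), the natural morphisms $M=H^q(Y,{\cal W}({\cal O}_Y))\lo H^q(Y,{\cal W}_n({\cal O}_Y))$ are surjective for every $n$, and by {\rm \cite[II (4.3)]{am}} together with the pro-representability of $\Phi^q_{Y/\kap}$ one has $M=D(\Phi^q_{Y/\kap})$. The exact sequence of sheaves $0\lo {\cal W}({\cal O}_Y)\os{V^n}{\lo}{\cal W}({\cal O}_Y)\lo {\cal W}_n({\cal O}_Y)\lo 0$ shows the kernel of $M\lo H^q(Y,{\cal W}_n({\cal O}_Y))$ is $V^nM$, so this morphism induces a Frobenius-equivariant isomorphism $M/V^nM\os{\sim}{\lo}H^q(Y,{\cal W}_n({\cal O}_Y))$, whose source has ${\cal W}$-length $n$ by $(1)$. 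Because the tangent space of $\Phi^q_{Y/\kap}$ is $H^q(Y,{\cal O}_Y)\simeq\kap$, the formal group $\Phi^q_{Y/\kap}$ is one-dimensional, so, its height $h$ being finite, $M$ is a free ${\cal W}$-module of rank $h$ by {\rm \cite[V (28.3.10)]{ha}}; in particular ${\rm length}_{\cal W}(M/pM)=h$. By {\rm (\ref{prop:nex})} we have $n^q(Y)=h^q(Y/\kap)=h$, so $F$ acts as $0$ on $H^q(Y,{\cal W}_{h-1}({\cal O}_Y))\simeq M/V^{h-1}M$, i.e.\ $FM\sus V^{h-1}M$; since $p=VF$ as operators on $M$, this yields $pM=VFM\sus V^hM$. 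Hence $H^q(Y,{\cal W}_h({\cal O}_Y))\simeq M/V^hM$ is a quotient of $M/pM$, and as both are ${\cal W}$-modules of length $h$, the natural surjection $M/pM\lo H^q(Y,{\cal W}_h({\cal O}_Y))$ is an isomorphism.

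Finally, $pM\sus V^hM$ also shows that $p$ annihilates $H^q(Y,{\cal W}_h({\cal O}_Y))\simeq M/V^hM$, so both sides of {\rm (\ref{ali:oyyh})} are $D(\kap)/p$-modules and the isomorphism, induced by the $D(\kap)$-linear natural morphism $M\lo H^q(Y,{\cal W}_h({\cal O}_Y))$, is $D(\kap)/p$-linear. The one point that is not purely formal is the inclusion $pM\sus V^hM$: I expect to obtain it from {\rm (\ref{prop:nex})} as above (via $FM\sus V^{h-1}M$), since a direct verification would instead require the explicit presentation of the Dieudonn\'{e} module of a height-$h$ formal group over the algebraic closure of $\kap$ together with a descent argument. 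With {\rm (\ref{prop:nex})} in hand, part $(2)$ is then routine; the remaining care goes into the bookkeeping of the two Witt-sheaf sequences in $(1)$ — in particular into checking that the connecting homomorphism of the first sequence there is exactly the Bockstein $\bet$ of {\rm (\ref{prop:nex})}.
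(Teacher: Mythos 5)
Your proposal is correct and takes essentially the same route as the paper: part (1) from the short exact sequence furnished by the Bockstein vanishing together with $H^{q+1}(Y,{\cal O}_Y)=0$, and part (2) by using $h=n^q(Y)$ from (\ref{prop:nex}) to get $F=0$ on $H^q(Y,{\cal W}_{h-1}({\cal O}_Y))$ and hence $pM\subset V^hM$ (the paper performs the same computation elementwise in $M_h$, writing $p\eta=VR_hF(\eta)=VFR_h(\eta)=0$), followed by the identical length-$h$ count. The only cosmetic differences are that you phrase everything through the identification $M_n\simeq M/V^nM$ and treat $h=1$ uniformly, whereas the paper handles $h=1$ as a separate trivial case.
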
 
\begin{proof}
(1): By the assumptions 
we have the following exact sequence 
\begin{align*} 
0\lo H^q(Y,{\cal W}_{n-1}({\cal O}_Y))\os{V}{\lo} 
H^q(Y,{\cal W}_n({\cal O}_Y))\lo 
H^q(Y,{\cal O}_Y)\lo 0. 
\end{align*} 
(1) immediately follows from this. 
\par 
(2): First assume that $h=1$. Then 
$H^q(Y,{\cal W}({\cal O}_Y))\simeq {\cal W}$. 
In this case, (2) is obvious. 
\par 
Next assume that $1<h<\infty$.  
Set $M_n:=H^q(X,{\cal W}_n({\cal O}_X))$ $(n\in {\mab Z}_{\geq 1})$ 
and $M:=H^q(X,{\cal W}({\cal O}_X))$. 
Consider the following exact sequence 
\begin{align*}
0\lo {\cal W}_{m-n}({\cal O}_Y)\os{V^n}{\lo}  {\cal W}_{m}({\cal O}_Y)\lo 
{\cal W}_n({\cal O}_Y)\lo 0
\end{align*}
for $m>n$. 
By the assumption and (\ref{ali:yexyk}) we see that 
$H^{q+1}(Y,{\cal W}_m({\cal O}_X))=0$ for any $m$. 
Hence the natural morphism $M_m\lo M_n$ is surjective
and consequently  
the natural morphism $M\lo M_n$ is surjective.  
In particular, the natural morphism 
$M\lo M_h$ is surjective. 
Let $\eta$ be an element of $M_h$. 
We claim that $p\eta=0$. 
\par 
We have to distinguish the operator 
$F\col M_n\lo M_n$ and the operator $F\col M_n\lo M_{n-1}$. 
The latter ``$F$ is equal to $R_nF$, where 
$R_n\col M_n\lo M_{n-1}$ is the projection. 
We denote $R_nF$ by $F_n$ to distinguish two $F$'s.  
Since the following diagram 
\begin{equation*} 
\begin{CD} 
M_h@>{R_h}>> M_{h-1}\\
@V{F}VV @VV{F}V \\
M_h @>{R_h}>> M_{h-1}
\end{CD} 
\tag{6.2.2}\label{cd:hhr1} 
\end{equation*} 
is commutative, we have the following: 
\begin{align*} 
p\eta=VF_h(\eta)=VR_hF(\eta)=VFR_h(\eta).
\end{align*}  
Since $F=0$ on $M_{h-1}$ by (\ref{prop:nex}), the last term is equal to zero.  
Hence $p\eta=0$.  
Consequently the natural morphism 
$H^q(Y,{\cal W}({\cal O}_Y))\lo H^q(Y,{\cal W}_h({\cal O}_Y))$ 
factors through the projection 
$H^q(Y,{\cal W}({\cal O}_Y))\lo H^q(Y,{\cal W}({\cal O}_Y))/p$. 
Since the morphism (\ref{ali:oyyh}) is surjective and 
${\dim}_{\kap}H^q(Y,{\cal W}({\cal O}_Y))/p=h=
{\dim}_{\kap}H^q(Y,{\cal W}_h({\cal O}_Y))$ by (1), 
the morphism (\ref{ali:oyyh}) is an isomorphism. 
\end{proof} 

\begin{rema} 
Let the notations be as in (\ref{theo:xfh}) (2).
By using only (\ref{prop:nnz}) for the case $q=d$, 
we can prove that 
\begin{equation*}
\# Y({\mab F}_{q^k}) \equiv1~{\rm mod}~ p^{[\frac{ke+1}{2}]} 
\quad (k\in {\mab Z}_{\geq 1}),
\tag{6.3.1}\label{eqn:kfda2td}
\end{equation*}
where $[~]$ is the Gauss symbol. 
However the congruence (\ref{eqn:kfda2td}) is not sharper than 
(\ref{eqn:kfd2td}); only in the case $h=2$,  (\ref{eqn:kfda2td}) 
is equivalent to (\ref{eqn:kfd2td}). 
\end{rema}


\bigskip
\bigskip
\parno
Yukiyoshi Nakkajima 
\parno
Department of Mathematics,
Tokyo Denki University,
5 Asahi-cho Senju Adachi-ku,
Tokyo 120--8551, Japan. 
\parno
{\it E-mail address\/}: 
nakayuki@cck.dendai.ac.jp

\end{document}